\numberwithin{equation}{section}
\def\rev{{op}}
\def\pv#1{\ensuremath{{\mathsf{#1}}}}
\def\Om#1#2{\ensuremath{\overline\Omega_{#1}{\pv{#2}}}}
\def\om#1#2{\ensuremath{\Omega_{#1}{\pv{#2}}}}
\def\Omvar#1#2{\ensuremath{\overline\Omega_{#1}{{#2}}}}
\def\Ker#1{\ensuremath{\mathop{\rm Ker}{#1}}}
\def\te#1{\mathsf t_{#1}}
\def\be#1{\mathsf i_{#1}}
\def\op{\lbrack\hskip-1.6pt\lbrack\relax}
\def\cl{\rbrack\hskip-1.6pt\rbrack\relax}
\newcommand\D{\mathcal{D}}
\newcommand\J{\mathcal{J}}
\newcommand\R{\mathcal{R}}
\renewcommand\L{\mathcal{L}}
\renewcommand\H{\mathcal{H}}
\newcommand\K{\mathcal{K}}
\newtheorem{teor}{Theorem}[section]
\newtheorem{prop}[teor]{Proposition}
\newtheorem{lema}[teor]{Lemma}
\newtheorem{cor}[teor]{Corollary}
\newtheorem{remark}[teor]{Remark}
\newtheorem{defi}[teor]{Definition} 
\theoremstyle{definition}
\newcommand\malcev{\mathop{\raise1pt\hbox{\footnotesize$\bigcirc$\kern-8pt\raise1pt
      \hbox{\tiny$m$}\kern1pt}}}
\def\Lo#1{\ensuremath{\mathscr L\pv {#1}}}
\begin{document}

\title[Some operators that preserve
  the locality of a pseudovariety]{Some operators that preserve
  the locality of a pseudovariety of semigroups}

\thanks{Research supported by the Centre for Mathematics of the
University of Coimbra and FCT --- Fundação para a Ciência e a Tecnologia,
 and by the FCT
 project PTDC/MAT/65481/2006, within the framework of the European programmes
 COMPETE and FEDER}

\author{Alfredo Costa}

\author{Ana Escada}
\address{CMUC, Department of Mathematics, University of Coimbra,
  Apartado 3008, EC Santa Cruz,
  3001-501 Coimbra, Portugal.}
\email{\{amgc,apce\}@mat.uc.pt}

\begin{abstract}
  It is shown that if $\pv V$ is a local monoidal pseudovariety of
  semigroups, then $\pv K\malcev \pv V$, $\pv D\malcev \pv V$
  and $\Lo I\malcev \pv V$ are local.
  Other operators of the form $\pv Z\malcev(\_)$ are considered.
  In the process, results about the interplay between operators
  $\pv Z\malcev(\_)$ and $(\_)\ast\pv D_k$ are obtained.
\end{abstract}

\keywords{Local pseudovarieties; permanent pseudovarieties;
  semigroups; monoids; Mal'cev
  products; semidirect products}

\makeatletter
\@namedef{subjclassname@2010}{%
  \textup{2010} Mathematics Subject Classification}
\makeatother
\subjclass[2010]{Primary 20M07, 20M35}

\maketitle

\section{Introduction}

 A pseudovariety of monoids $\pv V$ is \emph{local}
 if the pseudovariety of categories generated by $\pv V$
 coincides
 with the pseudovariety of
 categories whose local monoids belong to $\pv V$.
  Replacing \emph{monoid} by \emph{semigroup}, and
 \emph{category} by
 \emph{semigroupoid}, we get the notion of \emph{local pseudovariety of
 semigroups}. 
 These are important notions due to the key role
 finite categories/semigroupoids often play
 in operations between monoid/semigroup
 pseudovarieties~\cite{Tilson:1987,Rhodes&Steinberg:2009}.
 
  In~\cite{Jones&Szendrei:1992,Jones&Trotter:1995}
  one finds some unary operators
  preserving the locality
  of subpseudovarieties
  of  the pseudovariety $\pv {DS}$ of monoids 
 whose regular~$\D$-classes are semigroups.
  Further examples
  related to the operators studied in this paper appear
  in~\cite{Steinberg:2010LHmV}.
 For a pseudovariety $\pv V$ of monoids/semigroups,
 the pseudovariety of semigroups whose local monoids belong
 to $\pv V$ is denoted by $\Lo V$. 
 In~\cite{Steinberg:2010LHmV}, Steinberg proves that
 when $\pv H$ is a pseudovariety of groups and $\pv V$
 is a local pseudovariety of monoids, then the Mal'cev product
 $\Lo H\malcev  \pv V$ is local
 if $\pv V$ contains the six-element Brandt monoid $B_2^1$ and $\pv H$ is
 Fitting, or if $\pv H$ is extension-closed and
 nontrivial.
  Also in~\cite{Steinberg:2010LHmV}, Steinberg
  posed the following question concerning the
  trivial pseudovariety~$\pv I$:
  is $\Lo I\malcev  \pv V$ local for \emph{every} local
  pseudovariety~$\pv V$ of monoids?
  Since $\pv I$ is a Fitting pseudovariety of groups,
  it suffices to consider pseudovarieties of monoids $\pv V$ such that
  $B_2^1\notin{\pv V}$,~that is, such
  that $\pv V\subseteq \pv {DS}$.
 On the other hand, Almeida had already proved the locality
 of all monoid subpseudovarieties
 of $\pv {DO}$ (the pseudovariety  of monoids 
 whose regular $\D$-classes are orthodox semigroups)
 in the image of the operator $\Lo I\malcev (\_)$~\cite{Almeida:1996c}.
 An example which is not covered by the previous cases, is that of
 the pseudovariety $\Lo I\malcev \pv {CR}$
 (the locality of
 $\pv {CR}$ was proved in~\cite{Jones:1993}). 
 
 We answer Steinberg's question affirmatively.
 First we prove the operator $\pv K\malcev (\_)$
 preserves locality, where $\pv K$ is the pseudovariety of
 semigroups  whose idempotents are left zeros.
 This implies $\pv D\malcev (\_)$
 preserves locality, where $\pv D$ is the dual of
 $\pv K$. As the operator $\Lo I\malcev (\_)$ is
 equal to the infinite alternate
 composition of $\pv K\malcev (\_)$ and $\pv D\malcev (\_)$~\cite{Auinger&Hall&Reilly&Zhang:1995},
 we easily deduce that $\Lo I\malcev (\_)$ preserves locality. 
 We do not take advantage of those cases where Steinberg and Almeida
 had already shown that $\Lo I\malcev (\_)$ preserves locality,
 but when dealing with the restriction
 of $\pv K\malcev (\_)$ to
 subpseudovarieties of $\pv{DS}$,
 we use left basic factorizations of implicit operations,
 a key idea in Almeida's paper~\cite{Almeida:1996c}.

  Results about operators of the form $\pv Z\malcev (\_)$
 for pseudovarieties $\pv Z$ other than
 $\Lo I$, $\pv K$ or $\pv D$, are obtained.
 Many concern the interplay
 between the operators $\pv Z\malcev (\_)$
 and $(\_)\ast\pv D_k$, where $k$ is a positive integer
 and $\pv D_k$ is the pseudovariety of semigroups satisfying
 the identity $yx_1\cdots x_k=x_1\cdots x_k$
  (recall that $\pv  D=\bigcup_{k\geq 1} \pv D_k$).
 Whether $\pv V$ is a monoid or a semigroup pseudovariety,
 the semidirect product $\pv V\ast\pv D_k$ is always a semigroup
 pseudovariety. This leads us to consider
 $\pv Z\malcev (\_)$ as an operator
 on the lattice of semigroup pseudovarieties.
 The translation to an operator on monoid pseudovarieties is easy. 
 As is common in the literature, we often use the same notation
 for a pseudovariety
 of monoids and for its generated pseudovariety of semigroups.
 For example, $\pv {Sl}$ can denote the pseudovariety of
 monoid semilattices or the pseudovariety of
 semigroup semilattices, depending on the context.
 
 As usual, the pseudovarieties of groups
 and of nilpotent semigroups are denoted by
 $\pv G$ and $\pv N$, respectively.
 The set of semigroup pseudovarieties
 \begin{equation*}
 \mathbb V= \{\Lo I,\pv K, \pv D,\pv N,\Lo G,
  \pv K\vee \pv G,\pv D\vee \pv G,\pv N\vee\pv G\}
 \end{equation*}
 plays an important role along this paper, due to the following
 proposition.
  \begin{prop}\label{p:local-operator-comutes}
  Let $\pv Z\in\mathbb V$.
  Then $\Lo {(\pv Z\malcev \pv V)}=\pv Z\malcev \Lo V$.
\end{prop}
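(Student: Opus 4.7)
The plan is to establish the equality via the two opposite inclusions, expecting the inclusion $\Lo{(\pv Z\malcev\pv V)}\subseteq \pv Z\malcev\Lo V$ to be the substantive one.

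For the easier inclusion $\pv Z\malcev\Lo V\subseteq \Lo{(\pv Z\malcev\pv V)}$, let $S\in\pv Z\malcev\Lo V$ via a relational morphism $\tau\colon S\to T$ with $T\in\Lo V$ and $e\tau^{-1}\in\pv Z$ for every $e\in E(T)$. Given $f\in E(S)$, I would pick any $t\in f\tau$ and replace it by an idempotent power $t^n$, which still belongs to $f\tau$ since $f=f^n$; calling this idempotent $g$, the restriction of $\tau$ to $fSf$ becomes a relational morphism into the local monoid $gTg$, which lies in $\pv V$ because $T\in\Lo V$. The idempotent preimages of this restricted morphism are subsemigroups of the original $e\tau^{-1}$'s, hence belong to $\pv Z$ (closure of $\pv Z$ under subsemigroups). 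Therefore $fSf\in\pv Z\malcev\pv V$, as required.

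For the non-trivial inclusion $\Lo{(\pv Z\malcev\pv V)}\subseteq \pv Z\malcev\Lo V$, suppose $fSf\in\pv Z\malcev\pv V$ for every $f\in E(S)$, with witnessing relational morphism $\tau_f\colon fSf\to M_f\in\pv V$. The goal is to amalgamate the family $\{\tau_f\}_{f\in E(S)}$ into a single relational morphism $\tau\colon S\to T$ with $T\in\Lo V$ and idempotent preimages in $\pv Z$. The natural candidate for $T$ is the consolidation of a semigroupoid built over the idempotents of $S$, whose hom-set from $e$ to $f$ collects the compatible data of the $\tau_{e}$'s; the local monoid at $f$ is essentially $M_f\in\pv V$, so the consolidation lies in $\Lo V$. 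One then lifts the family $\{\tau_f\}$ to a global relational morphism $\tau\colon S\to T$ by sending $s\in S$ to the hom-set data it induces between the appropriate idempotent bases.

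The main obstacle is to verify that the idempotent preimages of this global $\tau$ still belong to $\pv Z$. An idempotent of $T$ corresponds to a loop at some object $f$ with idempotent ``colour'' in $M_f$, and its preimage in $S$ need not coincide with a preimage under $\tau_f$; elements from outside $fSf$ can contribute. This is where the specific structure of $\pv Z\in\mathbb V$ must intervene: each such $\pv Z$ is equationally close to being determined by its local monoids (trivial for $\pv K,\pv D,\pv N,\Lo I$; groups for $\Lo G$; with $\pv G$-joins adding only globally group-like complements), and so the preimage of an idempotent of $T$ decomposes into pieces each controlled by a single $\tau_f$-preimage (which is in $\pv Z$) together with an absorbing/annihilating action that keeps the total in $\pv Z$. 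I expect the verification to proceed uniformly by appealing to the descriptions of the members of $\mathbb V$ via pseudoidentities such as $x^\omega y=x^\omega$, $yx^\omega=x^\omega$, $x^\omega=0$, and their conjunctions with $y^\omega=(xy)^\omega=(yx)^\omega$ for the join with $\pv G$, applied pointwise to the chosen idempotent preimages.
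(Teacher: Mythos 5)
Your argument for the inclusion $\pv Z\malcev \Lo V\subseteq \Lo{(\pv Z\malcev\pv V)}$ is fine and is genuinely different from the paper's: you restrict a witnessing relational morphism $\tau\colon S\to T$ to $fSf\to gTg$ for a well-chosen idempotent $g\in f\tau$, and observe that the idempotent preimages of the restriction are subsemigroups of those of $\tau$. This elementary argument works for \emph{any} pseudovariety $\pv Z$, whereas the paper's treatment of this inclusion routes through the canonical quotient $\mu_{\pv Z}^S$ of the Semilocal Theory and~\cite[Theorem~4.6.50]{Rhodes&Steinberg:2009}. (That theorem is the reason the paper restricts to $\pv Z\in\mathbb V$ on both sides.)

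The other inclusion, $\Lo{(\pv Z\malcev\pv V)}\subseteq\pv Z\malcev\Lo V$, is where the substance lies, and here you have not produced a proof. You propose to amalgamate the family of relational morphisms $\tau_f\colon fSf\to M_f$ into a single relational morphism $\tau\colon S\to T$ with $T$ the consolidation of a semigroupoid whose local monoids are the $M_f$, and you correctly identify the crux: an idempotent of the consolidated $T$ need not have its $\tau$-preimage controlled by any single $\tau_f$, since elements of $S$ outside all the $fSf$'s can contribute. But you do not actually define the semigroupoid, do not define $\tau$, and do not resolve this obstacle -- you only state that you ``expect the verification to proceed uniformly'' via the defining pseudoidentities of the members of $\mathbb V$. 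That is not an argument; it is a statement of hope. Note in particular that there is no general mechanism turning a family $\{\tau_f\}$ with targets in $\pv V$ into a semigroupoid whose consolidation has its local monoids in $\pv V$ while keeping global idempotent preimages inside $\pv Z$; a construction of this flavour is precisely what the Delay Theorem machinery is about, and it requires locality hypotheses you do not have.

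The paper avoids this amalgamation problem entirely. Given $S\in\Lo{(\pv Z\malcev\pv V)}$, it fixes a regular $\J$-class $K$ and an idempotent $e$ of $S$, and works with $M=\mu_{\pv Z,K}^S(eSe)$, which is a local monoid of the $\pv Z$-semigroup $\mu_{\pv Z,K}^S(S)$. By~\cite[Proposition~4.6.56]{Rhodes&Steinberg:2009} (the paper's Proposition~\ref{p:local-submonoids-are-also-xmapping}) $M$ is again a $\pv Z$-semigroup, so $\mu_{\pv Z}^M=\mathrm{id}_M$; since $M$ is a quotient of $eSe\in\pv Z\malcev\pv V$, applying~\cite[Theorem~4.6.50]{Rhodes&Steinberg:2009} to $M$ and to $\mu_{\pv Z}^M$ forces $M\in\pv V$. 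This shows $\mu_{\pv Z,K}^S(S)\in\Lo V$ for each $K$, hence $\mu_{\pv Z}^S(S)\in\Lo V$ by subdirect decomposition, hence $S\in\pv Z\malcev\Lo V$, again by Theorem~4.6.50. If you want to salvage your approach, the realistic repair is to replace your ad hoc semigroupoid by these canonical $\mu_{\pv Z,K}^S$-quotients; as it stands, your proof of the inclusion $\Lo{(\pv Z\malcev\pv V)}\subseteq\pv Z\malcev\Lo V$ is a gap, not a proof.
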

The case where $\pv Z=\Lo I$ is Exercise 4.6.58
  in~\cite{Rhodes&Steinberg:2009}.
  The general case $\pv Z\in\mathbb V$ offers no additional
  difficulty,
  but given its importance to our paper, we give an explicit proof
  of Proposition~\ref{p:local-operator-comutes} in the appendix at the
  end of this paper.
  
  Proposition~\ref{p:local-operator-comutes} motivates the search for
  conditions under which the inclusion 
    \begin{equation}
    \label{eq:second-result-introduction-variation}
    \pv Z\malcev(\pv V\ast\pv D_k)\subseteq
          (\pv Z\malcev\pv V)\ast\pv D_k.
  \end{equation}
  holds for all $k\ge 1$,
  since if
  $\Lo {(\pv Z\malcev \pv V)}=\pv Z\malcev \Lo V$ and
  (\ref{eq:second-result-introduction-variation}) hold,
  then the locality of $\pv V$ implies the
  locality of $\pv Z\malcev \pv V$, if $\pv V$ is monoidal (cf.~Lemma~\ref{l:conditions-for-the-locality-of-ZmV}).  
  With this motivation,
  we introduce in Section~\ref{sec:perm-pseud}
  \emph{permanent} pseudovarieties.
  These pseudovarieties have bases of
  pseudoidentities satisfying certain conditions allowing an application of known results
  about pseudoidentities bases
  of Mal'cev
  products~\cite{Pin&Weil:1996a}
  and of semidirect products with
  $\pv {D}_k$~\cite{Almeida&Azevedo&Teixeira:1998}.
  We prove that if $\pv Z$ is permanent then~(\ref{eq:second-result-introduction-variation})
  holds if $\pv Z\malcev\pv V$ is not contained in $\pv {DS}$.
    All members of $\mathbb V$ are permanent.
  The pseudovariety $\pv A$
  of finite aperiodic semigroups
  is also
  permanent.
  
  If $\pv Z\in \{\pv K,\pv D,\Lo I\}$,
  we show that~(\ref{eq:second-result-introduction-variation})
  also holds
  when $\pv {Sl}\subseteq \pv V\subseteq \pv {DS}$. 
    The proof depends strongly on the locality
  of~$\pv {DS}$~\cite{Jones&Trotter:1995}.
  The aforementioned preservation
  of locality by operator $\Lo G\malcev(\_)$,
  proved by Steinberg~\cite{Steinberg:2010LHmV},
  gives another proof of the locality of~$\pv {DS}$,
  since~$\pv {DS}=\Lo G\malcev \pv {Sl}$~\cite{Putcha:1973,Schutzenberger:1976} 
  and $\pv {Sl}$ is local~\cite{Brzozowski&Simon:1973,Eilenberg:1976}. Other examples are discussed in 
  Section~\ref{sec:main-results-appl},
  where the main results and applications appear.
  Among the applications, we deduce, from
  cases where (\ref{eq:second-result-introduction-variation})
  holds, a result (Theorem~\ref{t:varieties-languages}) about
  varieties of formal languages,
  indicating the interest of~(\ref{eq:second-result-introduction-variation})
  is not limited to locality  questions.

The proof of some of the main results in
Section~\ref{sec:main-results-appl} 
is deferred to the three final sections.
One of them, Section~\ref{sec:pseud-pv-dsastpv},
is dedicated to implicit operations on $\pv {DS}\ast\pv D_k$,
and seems to be of independent interest.

\section{Preliminaries}

This paper concerns the theory of pseudovarieties of
semigroups/monoids
(classes of finite semigroups/monoids closed under taking homomorphic images,
 subsemigroups/submonoids and finitary direct products)
which we assume the reader is familiar with. As supporting
references, we indicate books~\cite{Almeida:1994a,Rhodes&Steinberg:2009}.

By an \emph{alphabet} we mean a finite nonempty set.
Let $A$ be an alphabet.
For each pseudovariety of semigroups $\pv V$
we denote by $\Om AV$ the $A$-generated
free pro-$\pv V$ semigroup.
The elements of $\Om AV$ can be interpreted as the \emph{implicit operations}
on $\pv V$, in the sense developed in~\cite[Chapter 3]{Almeida:1994a}
and~\cite[Chapter 3]{Rhodes&Steinberg:2009}.
The subsemigroup of $\Om AV$ generated by $A$, which is dense
in $\Om AV$, is denoted by $\om AV$.  If $\pv V$ contains $\pv N$,
then $\om AV$
is isomorphic to the free semigroup $A^+$,
and every element of $\om AV$
is isolated in $\Om AV$; in this case we identify $\om AV$ with the
semigroup $A^+$.
The elements of $\Om AV$ can be viewed as
generalizations of words on the alphabet $A$, and so
sometimes they are also called \emph{pseudowords} (of $\Om AV$).

Since the cardinal of an alphabet determines $\Om AV$ and $\om AV$, we may
write $\Om {|A|}V$ and $\om {|A|}V$
instead of $\Om AV$ and $\om {A}V$, respectively.
Let $\pv S$ be the pseudovariety of all finite semigroups.
The semigroup $\Om 2S$ will play an important role in this paper.
The elements of the generating set of $\Om 2S$ are the implicit operations
$x_1$ and $x_2$, where $x_i$ is the binary implicit operation
``projection on the $i$-th component''.

An ordered pair $(u,v)$ of implicit operations of $\Om AV$ can be
interpreted as a \emph{pseudoidentity} over the alphabet $A$,
and as such is denoted by the formal equality $u=v$.
We will also use
the standard notation $S\models u=v$ and $\pv V\models u=v$ to
indicate that the profinite semigroup $S$ and
the pseudovariety $\pv V$ satisfy the pseudoidentity $u=v$, and we write
$\pv V=\op\Sigma\cl$ to indicate that $\pv V$ is defined by the basis
of pseudoidentities~$\Sigma$.
Again, the reader looking for details may consult
\cite[Chapter 3]{Almeida:1994a} or \cite[Chapter 3]{Rhodes&Steinberg:2009}.

The \emph{content} of a pseudoword $u\in\Om AV$, with $\pv
{Sl}\subseteq \pv V$, is the image of
$u$ by the canonical homomorphism $c:\Om AV\to\Om A{Sl}$.
It amounts to the set of letters on which $u$ depends~\cite[Section 8.1]{Almeida:1994a}.
One defines~$c(1)=\emptyset$.

For a semigroup $S$ with operation $\star$,
its \emph{dual}, denoted $S^{\rev}$, is
the semigroup
with the same underlying set
and operation $\star_{\rev}$ such that
$s\star_{\rev}t=t\star s$ for all $s,t\in S$.
If $S$ is a topological semigroup, then $S^{\rev}$ is a topological
semigroup for the same topology.
The \emph{dual of a pseudovariety of semigroups $\pv V$} is
the pseudovariety 
$\pv V^\rev=\{S^\rev\mid S\in\pv V\}$.
The pseudovariety $\pv V$ is \emph{self-dual} if~$\pv V=\pv V^\rev$.

Given an alphabet $A$,
the semigroup $(\Om AS)^\rev$ is profinite, and so
there is a unique
continuous homomorphism $\chi_A$ from $\Om AS$ to $(\Om AS)^\rev$
such that $\chi_A(a)=a$ for every $a\in A$.
Applying a forgetful functor, we may view
$\chi_A$ as a continuous self-mapping of $\Om AS$,
and thus we may consider the composition
$\chi_A\circ\chi_A$.
Clearly, one has $\chi_A\circ\chi_A(A^+)=A^+$.
Since $A^+$ is dense in $\Om AS$, we have
$\chi_A\circ\chi_A=id_{\Om AS}$. Therefore
$\chi_A$ is a continuous isomorphism from $\Om AS$ onto $(\Om
AS)^\rev$~(cf.~\cite[Exercise 5.2.1]{Almeida:1994a}).

\begin{lema}\label{l:properties-of-reverse-homo}
  Let $u,v\in \Om AS$.
  For a finite semigroup $S$, one has $S\models u=v$,
if and only if $S^\rev\models\chi_A(u)=\chi_A(v)$.
\end{lema}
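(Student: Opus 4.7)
The plan is to reduce the statement to the universal property of $\Om AS$ together with the involutivity $\chi_A\circ\chi_A=\mathrm{id}$ established just above the lemma. The key intermediate identification is that, for every function $f\colon A\to S$, the continuous homomorphism $\tilde f\colon\Om AS\to S^\rev$ extending $f$ factors as $\hat f\circ\chi_A$, where $\hat f\colon\Om AS\to S$ is the continuous homomorphism extending $f$ with respect to the original multiplication of $S$.

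To justify this factorization, I would argue that $\hat f$, regarded as a map of underlying sets, is automatically a continuous homomorphism $(\Om AS)^\rev\to S^\rev$, since reversing multiplication on both the source and the target preserves the homomorphism property. Composing with the continuous isomorphism $\chi_A\colon\Om AS\to(\Om AS)^\rev$ then yields a continuous homomorphism $\Om AS\to S^\rev$ which agrees with $f$ on $A$; by the uniqueness part of the universal property it must therefore coincide with $\tilde f$.

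With the identity $\tilde f=\hat f\circ\chi_A$ in hand, the lemma follows by an immediate chain of equivalences. The assertion $S^\rev\models\chi_A(u)=\chi_A(v)$ means that $\tilde f(\chi_A(u))=\tilde f(\chi_A(v))$ for every $f\colon A\to S$, which via the factorization becomes $\hat f(\chi_A\circ\chi_A(u))=\hat f(\chi_A\circ\chi_A(v))$, and by $\chi_A\circ\chi_A=\mathrm{id}$ reduces to $\hat f(u)=\hat f(v)$, that is, to $S\models u=v$. The whole argument is a short bookkeeping verification; the only thing one must keep track of is which multiplication each continuous homomorphism uses, and no substantive obstacle arises beyond that.
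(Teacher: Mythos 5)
Your proof is correct, and it takes a genuinely different route from the paper's. The paper argues by density: it first handles the case $u,v\in A^+$ by a straightforward induction on word length, and then extends to arbitrary pseudowords by choosing words $x,y\in A^+$ that are simultaneously close enough to $u,v$ that $S\models u=x$, $S\models v=y$, $S^{\rev}\models\chi_A(u)=\chi_A(x)$, $S^{\rev}\models\chi_A(v)=\chi_A(y)$ (possible because $S$ is finite, $\chi_A$ is continuous, and $A^+$ is dense), whereupon the word case finishes the job. You instead exploit the universal property of $\Om AS$ directly: the observation that any continuous homomorphism $\hat f\colon\Om AS\to S$ is equally a continuous homomorphism $(\Om AS)^{\rev}\to S^{\rev}$, combined with uniqueness of extensions, gives the factorization $\tilde f=\hat f\circ\chi_A$ in one stroke, and then $\chi_A\circ\chi_A=\mathrm{id}$ closes the loop with no approximation. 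Your approach is more conceptual and avoids the two-step word/density reduction entirely; the paper's is more elementary in that it rests on an explicit induction over $A^+$ and a compactness-style approximation, which some readers may find easier to verify by hand. Both are sound, and both ultimately hinge on the involutivity of $\chi_A$ established just before the lemma.
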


\begin{proof}
  In the case where $u,v\in A^+$, the lemma is justified
  by a simple induction argument.
  For the general case, note that,
  since $S$ is finite, $\chi_A$ is continuous and $A^+$ is dense in
  $\Om AS$, there are $x,y\in A^+$ such that $S\models u=x$,
  $S\models v=y$, $S^\rev\models \chi_A(u)=\chi_A(x)$
  and $S^\rev\models \chi_A(v)=\chi_A(y)$.
  Hence, $S\models u=v$ if and only if $S\models x=y$,
  if and only if $S^\rev\models\chi_A(x)=\chi_A(y)$ (by the already
  proved case), if and only if $S^\rev\models\chi_A(u)=\chi_A(v)$.  
\end{proof}

Given a semigroup $S$ which is not a monoid,
one denotes by $S^1$ the monoid obtained by $S$ by adjoining an
identity; if the semigroup $S$ is a monoid, then one defines $S^1=S$.
Occasionally it will be convenient
to consider the profinite monoid $(\Om AV)^1$, obtained from
$\Om AV$ by adjoining the empty word~$1$ as an identity and an
isolated point. If $\pv S$ is a profinite semigroup and $\pv V$ is a
semigroup pseudovariety, then $S\models 1=1$ and
$\pv V\models 1=1$.

For a pseudovariety of monoids $\pv V$, the
pseudovariety of semigroups generated by the elements of $\pv V$
viewed as semigroups is denoted by $\pv V_{\pv S}$.
It is well known that $S\in\pv V_{\pv S}$ if and only if $S^1\in\pv
V$~(\cite[Section 7.1]{Almeida:1994a}).
A pseudovariety of semigroups is \emph{monoidal} if it is of the form
$\pv V_{\pv S}$ for some pseudovariety of monoids $\pv V$.

Let $\pv W$ be a pseudovariety of semigroups.
As is usual in the literature,
regardless of whether $\pv V$  is a pseudovariety of semigroups
or of monoids, we shall denote by $\pv W\malcev \pv V$ the Mal'cev
product of $\pv W$ with $\pv V$
and by $\pv W\ast\pv V$ the semidirect product
of $\pv W$ by $\pv V$.

Recall that, if $\pv V$ is a
pseudovariety of semigroups, then $\pv W\malcev \pv V$ is the
pseudovariety of semigroups generated by finite semigroups $S$ for which
there is a semigroup homomorphism $\varphi$ from~$S$ into some element~$T$ of
$\pv V$ such that $\varphi^{-1}(e)\in\pv W$ for every idempotent
$e$ in~$T$. Replacing \emph{semigroup} by \emph{monoid}
in the previous sentence,
we obtain the characterization of $\pv W\malcev \pv V$ when $\pv V$ is
a pseudovariety of monoids.
We remark the following difference between the Mal'cev product of
two semigroup pseudovarieties and the
Mal'cev product of a semigroup pseudovariety with a monoid
pseudovariety:
in the former case, $\pv W\malcev \pv V$ contains $\pv W$,
while in the latter case, that may not occur.
In both cases, $\pv W\malcev \pv V$ contains $\pv V$.
The following proposition
seems folklore.

\begin{prop}\label{p:malcev-sgp-vs-monoid}
  Let $\pv W$ be a pseudovariety of semigroups, and let $\pv V$ be a
  pseudovariety of monoids.
  If\/ $\pv V$ contains $\pv {Sl}$
  then $(\pv W\malcev \pv V)_{\pv S}=\pv W\malcev \pv V_{\pv S}$.
\end{prop}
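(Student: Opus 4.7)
The plan is to prove the two inclusions separately.

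For $\pv W\malcev\pv V_{\pv S}\subseteq (\pv W\malcev\pv V)_{\pv S}$, since the class of semigroups admitting a semigroup homomorphism into some element of $\pv V_{\pv S}$ with every idempotent preimage in $\pv W$ is closed under finite products and subsemigroups, it suffices to show that $S^1\in\pv W\malcev\pv V$ whenever $S$ itself admits such a homomorphism $\varphi\colon S\to T\in\pv V_{\pv S}$. The naive extension $\varphi^1\colon S^1\to T^1$ can fail, because the preimage of $1_{T^1}$ may absorb the adjoined identity of $S^1$ and fall outside $\pv W$. To circumvent this, I would use the hypothesis $\pv{Sl}\subseteq\pv V$: letting $U$ denote the two-element semilattice $\{0,1\}$ (with identity $1$), define a monoid homomorphism $\psi\colon\hat S\to T^1\times U$, where $\hat S$ is obtained from $S$ by adjoining a fresh identity $e$ (so that $\hat S=S^1$ whenever $S$ is not already a monoid), via $\psi(s)=(\varphi(s),0)$ for $s\in S$ and $\psi(e)=(1_{T^1},1)$. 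The codomain lies in $\pv V$ since $U\in\pv{Sl}\subseteq\pv V$. The idempotents of $\psi(\hat S)$ split into two kinds: those of the form $(f,0)$, whose preimage is exactly $\varphi^{-1}(f)\in\pv W$; and the identity $(1_{T^1},1)$, whose preimage is the trivial semigroup $\{e\}\in\pv W$. Thus $\hat S\in\pv W\malcev\pv V$, and since $S^1$ is a divisor of $\hat S$ as a monoid in all cases, $S^1\in\pv W\malcev\pv V$, as desired.

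For the reverse inclusion $(\pv W\malcev\pv V)_{\pv S}\subseteq \pv W\malcev\pv V_{\pv S}$, the hypothesis on $\pv{Sl}$ plays no role. Given $S$ with $S^1\in\pv W\malcev\pv V$, the same closure argument (the class of nice monoids is closed under finite products and submonoids) yields a monoid $M$ with a monoid homomorphism $\varphi\colon M\to T\in\pv V$ satisfying $\varphi^{-1}(e)\in\pv W$ for each idempotent $e\in T$, such that $S^1$ is a monoid-theoretic quotient of $M$. Viewing $M$ as a semigroup and $\varphi$ as a semigroup homomorphism into $T\in\pv V_{\pv S}$, the idempotent preimages are unchanged, so $M\in\pv W\malcev\pv V_{\pv S}$. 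Hence $S^1$ belongs to $\pv W\malcev\pv V_{\pv S}$, and so does its subsemigroup $S$.

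The main obstacle is the first inclusion and the bookkeeping forced by the convention $S^1=S$ for monoids. Working with the enlarged monoid $\hat S$ and the semilattice factor $U$ cleanly decouples the fresh identity from the interior idempotents, which is what makes the argument go through uniformly. The assumption $\pv{Sl}\subseteq\pv V$ is genuinely needed and is not a matter of convenience: simple choices such as $\pv V=\pv I$ already exhibit a gap between the two sides of the proposition when $U$ is unavailable inside $\pv V$.
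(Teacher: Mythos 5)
Your proof is correct and takes essentially the same approach as the paper: adjoin a fresh identity to $S$ and use a two-element semilattice factor (which is where $\pv{Sl}\subseteq\pv V$ enters) to separate that new identity from the image of $S$. The only cosmetic difference is that you map $\hat S$ directly into $T^1\times U$ and handle both possibilities uniformly, whereas the paper first forms $T^I$ and splits into cases according to whether $T$ is a monoid, invoking the semilattice factor $U_1$ only in the case where $T$ is already a monoid.
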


\begin{proof}
  Clearly, $\pv W\malcev \pv V\subseteq\pv W\malcev \pv V_{\pv S}$,
  thus $(\pv W\malcev \pv V)_{\pv S}\subseteq\pv W\malcev \pv V_{\pv
    S}$.

  Conversely, let $S$ be a finite semigroup for which there is a
  homomorphism $\varphi:S\to T$ such that $T\in\pv V_{\pv S}$
  and $\varphi^{-1}(e)\in \pv W$ for every idempotent $e$ of $S$.
  For a semigroup $R$, denote by $R^I$ the monoid obtained from $R$ by adjoining an identity
  $I$ not in $R$ (regardless of whether $R$ is a monoid or not).
  Extend $\varphi:S\to T$ to a
  monoid homomorphism $\bar\varphi:S^I\to T^I$.
  Suppose that $T$ is not a monoid.
  Then $T^I\in\pv V$, since $T\in\pv V_{\pv S}$,
  and so, as $\bar\varphi^{-1}(I)=\{I\}$, we conclude that
  $S^I\in \pv W\malcev \pv V$, thus $S\in (\pv W\malcev \pv V)_{\pv S}$.
  Suppose now that $T$ is a monoid with identity $1_T$.
  Consider the monoid semilattice $U_1=\{0,1\}$, with the usual multiplication.
  Then $T\times U_1\in \pv V$.
  Note that $T\times\{0\}\cup \{(1_T,1)\}$ is a submonoid of $T\times U_1$
  isomorphic to $T^I$. Hence, $T^I\in\pv V$. Again,
  from the extension of $\varphi$ to a monoid homomorphism
  $\bar\varphi:S^I\to T^I$ we deduce that
  $S^I\in \pv W\malcev \pv V$ and so $S\in (\pv W\malcev \pv V)_{\pv S}$.
  This concludes the proof that
  $\pv W\malcev \pv V_{\pv  S}\subseteq (\pv W\malcev \pv V)_{\pv S}$.
\end{proof}

Proposition~\ref{p:malcev-sgp-vs-monoid} fails if $\pv V$ does not contain
the monoid semilattices. For example, viewing the pseudovariety
$\pv G$ of groups as a semigroup pseudovariety, we have
$\pv K\malcev \pv G=\pv K\vee\pv G$, while interpreting
$\pv G$ as a monoid pseudovariety we have $\pv K\malcev \pv G=\pv G$
(the deduction of these equalities are
easy exercises, cf.~\cite[Exercise 4.6.9]{Rhodes&Steinberg:2009}).

\begin{prop}
  \label{p:reverse-malcev}
  We have
  $(\pv W\malcev \pv V)^\rev=\pv W^\rev\malcev \pv V^\rev$
  for every pair $\pv V$, $\pv W$ of pseudovarieties of semigroups.
\end{prop}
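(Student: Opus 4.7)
The proof is a symmetry argument combined with a direct verification on generators, exploiting the fact that dualization $S \mapsto S^{\rev}$ is an involutive operation that commutes with the constructors (subsemigroups, homomorphic images, finitary direct products) defining a pseudovariety. In particular, for any class $\pv U$ of finite semigroups, the pseudovariety generated by $\{S^{\rev} : S \in \pv U\}$ is exactly $(\langle \pv U\rangle)^{\rev}$, where $\langle \pv U\rangle$ denotes the pseudovariety generated by $\pv U$.

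Since $(\_)^{\rev}$ is an involution, it suffices to prove one inclusion, say $(\pv W \malcev \pv V)^{\rev} \subseteq \pv W^{\rev} \malcev \pv V^{\rev}$; applying this inclusion with $\pv W$ replaced by $\pv W^{\rev}$ and $\pv V$ by $\pv V^{\rev}$ and then dualizing both sides yields the reverse inclusion. By the remark above and the definition of the Mal'cev product, it is enough to show that if $S$ is a finite semigroup admitting a homomorphism $\varphi : S \to T$ with $T \in \pv V$ and $\varphi^{-1}(e) \in \pv W$ for every idempotent $e$ of $T$, then $S^{\rev}$ belongs to $\pv W^{\rev} \malcev \pv V^{\rev}$.

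The verification is essentially immediate once one unpacks the definitions. The map $\varphi$, viewed set-theoretically, remains a semigroup homomorphism from $S^{\rev}$ to $T^{\rev}$, because $\varphi(s \star_{\rev} t) = \varphi(t \star s) = \varphi(t) \star \varphi(s) = \varphi(s) \star_{\rev} \varphi(t)$. The codomain $T^{\rev}$ lies in $\pv V^{\rev}$. The idempotents of $T^{\rev}$ coincide with those of $T$ since $e \star_{\rev} e = e \star e$. Moreover, for each such idempotent $e$, the preimage $\varphi^{-1}(e)$, regarded as a subsemigroup of $S^{\rev}$, is precisely $(\varphi^{-1}(e))^{\rev}$, which belongs to $\pv W^{\rev}$ because $\varphi^{-1}(e) \in \pv W$. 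Hence $S^{\rev}$ is one of the semigroups generating $\pv W^{\rev} \malcev \pv V^{\rev}$, completing the argument.

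There is no real obstacle: the only point deserving a moment's care is the observation that the Mal'cev product is defined via \emph{generation} by a class of witnessing semigroups, so one should handle generators first and then invoke compatibility of dualization with the pseudovariety closure operators, rather than attempting to prove the equality on arbitrary elements of $\pv W \malcev \pv V$ directly.
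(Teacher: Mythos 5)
Your proof is correct and follows essentially the same route as the paper's: both establish the inclusion $(\pv W\malcev \pv V)^\rev\subseteq\pv W^\rev\malcev \pv V^\rev$ by checking that the witnessing homomorphism $\varphi:S\to T$ dualizes to one from $S^\rev$ to $T^\rev$ with the required preimage condition, and both obtain the reverse inclusion by applying the same result to $\pv W^\rev$, $\pv V^\rev$ and using that dualization is an involution. Your extra remarks about dualization commuting with the closure operators defining a pseudovariety make explicit a point the paper's proof states more tersely, but the substance is identical.
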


\begin{proof}
  Note that if $\pv V$ is generated by a class $\mathcal K$ of
  semigroups then $\pv V^\rev$ is generated by the class
  $\{S^\rev\mid S\in \mathcal K\}$.
  So, let $S$ be a generating
  element of $\pv W\malcev \pv V$
  for which there is
  a homomorphism $\varphi:S\to T$ such that $T$ belongs
  to $\pv V$
  and $\varphi^{-1}(e)\in\pv W$ for every idempotent $e$ from $T$.
  We may consider
  the homomorphism $\varphi_\rev:S^\rev\to T^\rev$
  which as a set-theoretic mapping coincides with
  $\varphi$.
  Note that $\varphi_\rev^{-1}(e)=\varphi^{-1}(e)^\rev$. Therefore,
  $S^\rev\in \pv W^\rev\malcev \pv V^\rev$, thus proving
  $(\pv W\malcev \pv V)^\rev\subseteq\pv W^\rev\malcev \pv V^\rev$.
  Since the pseudovarieties are arbitrary,
  we also have $\pv W^\rev\malcev \pv V^\rev=
  ((\pv W^\rev\malcev \pv V^\rev)^\rev)^\rev\subseteq (\pv W\malcev \pv V)^\rev$.
\end{proof}

\begin{prop}[{\cite{Straubing:1985}}]\label{p:reverse-VastD}
  If $\pv V$ is a
  pseudovariety of semigroups containing some nontrivial monoid,
  then $(\pv V\ast\pv D_k)^\rev=\pv V^\rev\ast\pv D_k$ for every
  $k\ge 1$, and thus $(\pv V\ast\pv D)^\rev=\pv V^\rev\ast\pv D$.
\end{prop}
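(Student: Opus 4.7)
The plan is to reduce to a single inclusion and then unpack the wreath-product definition of the semidirect product. Since duality is involutive, an inclusion $(\pv V\ast\pv D_k)^\rev\subseteq \pv V^\rev\ast\pv D_k$ valid for every $\pv V$ satisfying the hypothesis (note that $\pv V^\rev$ also contains a nontrivial monoid) immediately yields the reverse inclusion upon replacing $\pv V$ by $\pv V^\rev$. So it suffices to prove the $\subseteq$ direction. Moreover, since $\pv V\ast\pv D_k$ is generated by wreath products $M\wr T$ with $M\in\pv V$ and $T\in\pv D_k$, it is enough to realize $(M\wr T)^\rev$ as a divisor of some $N\wr U$ with $N\in\pv V^\rev$ and $U\in\pv D_k$.

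The first substantive step is to exploit the defining identity $yx_1\cdots x_k=x_1\cdots x_k$ of $\pv D_k$. Interpreting $T$ concretely as generated by an alphabet $A$, this identity tells us that every element of $T$ of sufficient length is determined by its last $k$ letters, so in $M\wr T$ the relevant coordinates of the function part $f\colon T^1\to M$ are indexed by a finite set depending only on suffixes of length at most $k$. This reduction gives a rigid description of $M\wr T$ whose arithmetic one can write out explicitly.

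Next I would dualize this concrete description. Reversing multiplication in $M\wr T$ naturally turns the base into $M^\rev\in\pv V^\rev$, but swaps the direction of the action, yielding an action governed by $T^\rev\in\pv K_k$ rather than by an element of $\pv D_k$. This is where the hypothesis on $\pv V$ intervenes: a nontrivial monoid in $\pv V^\rev$ (equivalently, in $\pv V$) contains either $U_1$ or a nontrivial group, and in either case one can tensor the base by this monoid to absorb the asymmetry, embedding the $\pv K_k$-action into a $\pv D_k$-action at the cost of enlarging the base within $\pv V^\rev$. Concretely, the new base will be a finite submonoid of $M^\rev\times (\text{nontrivial monoid of }\pv V^\rev)$, and the new right factor is a standard semigroup from $\pv D_k$ on a suitably enlarged alphabet recording both the suffix and prefix data needed.

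I expect the trading of $\pv K_k$ for $\pv D_k$ through the nontrivial monoid in the base to be the main obstacle, both in writing down the embedding and in verifying that multiplication is preserved. The necessity of the hypothesis is clarified by the counterexamples of the remark following Proposition~\ref{p:malcev-sgp-vs-monoid}: without a nontrivial monoid, asymmetries between $\pv D_k$ and $\pv K_k$ can survive inside $\pv V\ast\pv D_k$. Finally, the statement for $\pv D$ follows from the one for $\pv D_k$ by taking the directed union over $k\ge 1$ and using that duality commutes with joins and unions of pseudovarieties.
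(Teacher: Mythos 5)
The paper itself does not prove this proposition; it cites Straubing's 1985 paper and the corresponding exercise in Almeida's book, so there is no in-paper argument to compare yours against. Assessed on its own terms, your outline correctly reduces to a single inclusion and correctly identifies the obstruction: reversing a wreath product $M\wr T$ with $T\in\pv D_k$ naturally turns the right-hand factor into something governed by $\pv K_k$, not $\pv D_k$. But the step meant to overcome this --- ``tensor the base with a nontrivial monoid of $\pv V^\rev$ to absorb the asymmetry'' --- is exactly the hard core of the result, and you leave it as a gesture. You do not produce the embedding of $(M\wr T)^\rev$ into any $N\wr U$ with $U\in\pv D_k$, nor explain how a nontrivial monoid in the base lets a $\pv K_k$-governed action be re-indexed by a $\pv D_k$ one. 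Since $\pv V\ast\pv D_k$ and $\pv V\ast\pv K_k$ differ in general, this is not a formality, and I am not convinced the tensoring maneuver works in the form you describe; at minimum the burden of proof has not been discharged.

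A cleaner route sidesteps the wreath-product bookkeeping entirely. Theorem~\ref{t:basic-phi-k} (Almeida's Theorem~10.6.12), whose hypothesis is precisely that $\pv V$ contains a nontrivial monoid, characterizes $\pv V\ast\pv D_k\models u=v$ by the conditions $\be k(u)=\be k(v)$, $\te k(u)=\te k(v)$ and $\pv V\models\Phi_k(u)=\Phi_k(v)$. One checks that $\be k(\chi_A(u))$ is the letter-reversal of $\te k(u)$ (and dually), and that $\Phi_k(\chi_A(u))=\chi_{A^{k+1}}\bigl(\rho_\ast(\Phi_k(u))\bigr)$, where $\rho_\ast$ denotes the continuous automorphism of $\Om {A^{k+1}}{S}$ induced by reversing each length-$(k+1)$ letter. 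Combining this with Lemma~\ref{l:properties-of-reverse-homo} and the fact that $\rho_\ast$ is an automorphism of the relatively free profinite semigroup shows in a few lines that $(\pv V\ast\pv D_k)^\rev\models u=v$ is equivalent to $\pv V^\rev\ast\pv D_k\models u=v$; the nontrivial-monoid hypothesis enters only through Theorem~\ref{t:basic-phi-k}. Your concluding remark that the $\pv D$ statement follows by directed union over $k$ is correct.
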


We remark that
in the statement of Proposition~\ref{p:reverse-VastD}
we choose  the semigroup varietal version
stated in~\cite[Exercise 10.6.9]{Almeida:1994a}, instead of the
original monoid version from~\cite{Straubing:1985}.

\section{Permanent pseudovarieties}\label{sec:perm-pseud}

\begin{defi}{\rm
Let $u=v$ be a pseudoidentity
between
implicit operations $u,v$ of $\Om 2S$
such that $u^2=u$, $u(u,v)=u$ and $v(u,v)=v$.
If additionally we have $v=uv$ (respectively $v=vu$)
then we say that $u=v$ is a
\emph{left-permanent}
(respectively \emph{right-permanent})
pseudoidentity.

A pseudovariety of semigroups
is \emph{left-permanent} (respectively~\emph{right-perma\-nent})
if it has a basis of left-permanent
(respectively \emph{right-permanent})
pseudoidentities.

A pseudovariety which is the intersection of
a family of left-permanent and right-permanent pseudovarieties is
called a \emph{permanent} pseudovariety.
Alternatively, a
permanent pseudovariety is
a pseudovariety having a basis consisting of
permanent pseudoidentities,
where by \emph{permanent pseudoidentity}
we mean a pseudoidentity which is either
left-permanent or right-permanent.
}
\end{defi}

  \begin{prop}\label{p:reverse-left}
    A pseudovariety of semigroups $\pv Z$ is left-permanent if
    and only if the pseudovariety
    $\pv Z^\rev$ is right-permanent.
  \end{prop}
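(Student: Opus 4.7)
The plan is to use the involutive anti-isomorphism $\rho := \chi_2$ of $\Om 2S$ (viewed as a self-map via the forgetful functor, as discussed preceding Lemma~\ref{l:properties-of-reverse-homo}), which satisfies $\rho(ab)=\rho(b)\rho(a)$, $\rho\circ\rho=\mathrm{id}$, and $\rho(x_i)=x_i$, to transport left-permanent pseudoidentities to right-permanent ones and vice-versa. By Lemma~\ref{l:properties-of-reverse-homo}, for any set $\Sigma$ of pseudoidentities on two variables, $\pv Z=\op\Sigma\cl$ if and only if $\pv Z^{\rev}=\op\Sigma^{\rev}\cl$, where $\Sigma^{\rev}=\{\rho(u)=\rho(v)\mid(u=v)\in\Sigma\}$. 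Hence it suffices to prove that $(u=v)\mapsto(\rho(u)=\rho(v))$ converts left-permanent pseudoidentities into right-permanent ones; the converse direction then follows by applying the same fact with $\pv Z^{\rev}$ in place of $\pv Z$ and using $\rho\circ\rho=\mathrm{id}$.

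So fix a left-permanent pseudoidentity $u=v$, with $u^2=u$, $u(u,v)=u$, $v(u,v)=v$ and $v=uv$. Since $\rho$ is an anti-homomorphism, the relations $u^2=u$ and $v=uv$ immediately yield $\rho(u)^2=\rho(u)$ and $\rho(v)=\rho(v)\rho(u)$. For the remaining two substitution identities, introduce the continuous endomorphisms $\tau,\tau'$ of $\Om 2S$ determined by $\tau(x_1)=u$, $\tau(x_2)=v$, $\tau'(x_1)=\rho(u)$, $\tau'(x_2)=\rho(v)$. The hypothesis rewrites as $\tau(u)=u$, $\tau(v)=v$, and the goal becomes $\tau'(\rho(u))=\rho(u)$ and $\tau'(\rho(v))=\rho(v)$. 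These follow from the identity
\begin{equation*}
\tau'\circ\rho=\rho\circ\tau,
\end{equation*}
which holds because both sides are continuous anti-homomorphisms $\Om 2S\to\Om 2S$ (as compositions of a homomorphism with an anti-homomorphism) that agree on the generators: $\tau'(\rho(x_i))=\tau'(x_i)$ equals $\rho(u)$ for $i=1$ and $\rho(v)$ for $i=2$, matching $\rho(\tau(x_i))$. By induction on length they agree on $\{x_1,x_2\}^+$, and by continuity and density on the whole of $\Om 2S$. Applying the equality to $u$ and $v$ gives $\tau'(\rho(u))=\rho(\tau(u))=\rho(u)$ and $\tau'(\rho(v))=\rho(v)$, as required.

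The main subtlety is the compatibility claim $\tau'\circ\rho=\rho\circ\tau$: one must be careful to keep track of which maps are homomorphisms and which are anti-homomorphisms, since the reduction to agreement on generators requires both compositions to be of the same (anti-)homomorphic type. Beyond this bookkeeping, the argument is a direct transport of structure through the reversal $\rho$.
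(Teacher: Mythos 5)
Your argument is correct and follows essentially the same route as the paper's: you reduce to a single left-permanent pseudoidentity via Lemma~\ref{l:properties-of-reverse-homo}, handle $u^2=u$ and $v=uv$ directly through the anti-homomorphism property of $\chi_2$, and establish the substitution conditions via the commutation identity $\tau'\circ\rho=\rho\circ\tau$, which is exactly the paper's relation $\psi=\chi_2\circ\varphi\circ\chi_2$ rewritten using $\rho\circ\rho=\mathrm{id}$. Your observation that both composites are anti-homomorphisms agreeing on generators is a slightly cleaner packaging of the paper's ``induction on length plus density'' verification, but the content is the same; one small imprecision is that for the converse you actually need the dual statement (right-permanent $\mapsto$ left-permanent under $\rho$), which follows by the mirror-image argument rather than by literally re-applying the already-proved fact to $\pv Z^{\rev}$ --- the paper glosses over this in the same way.
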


  \begin{proof}
    Since a left-permanent (respectively right-permanent)
    pseudovariety is the intersection of
    pseudovarieties defined by only one
    left-permanent (respectively right-permanent)
    pseudoidentity,
    it suffices to consider the case where
    $\pv Z=\op u=v\cl$ for some left-permanent
    pseudoidentity $u=v$.

    Then, by Lemma~\ref{l:properties-of-reverse-homo},
    we have $\pv Z^\rev=\op \chi_2(u)=\chi_2(v)\cl$.
    Now,
    \begin{equation*}
    \chi_2(v)=\chi_2(uv)=\chi_2(u)\cdot_{\rev}\chi_2(v)=
    \chi_2(v)\cdot\chi_2(u).      
    \end{equation*}
    It is also clear that $\chi_2(u)$ is an idempotent.
    
    Finally,
    let $\varphi$ be the unique continuous endomorphism
    of $\Om 2S$ such that
    $\varphi(x_1)=u$
    and $\varphi(x_2)=v$,
    and
    consider the unique continuous endomorphism
    $\psi$ of $\Om 2S$ such that $\psi(x_i)=\chi_2(\varphi(x_i))$,
    for $i\in\{1,2\}$.
    By induction on the length of $u$,
    it is routine to check that
    $\psi(u)=\chi_2\circ\varphi\circ\chi_2(u)$ for every $u\in \om 2S$
    (recall that the composition $\chi_2\circ\varphi\circ\chi_2$ is
    well defined via the application of a forgetful functor).
    Hence, since $\om 2S$ is dense in $\Om 2S$ and $\psi$, $\varphi$
    and $\chi_2$ are continuous mappings, it follows that
    the function $\psi$ equals the composite
    $\chi_2\circ\varphi\circ\chi_2$.
    Then
    \begin{equation*}
      \chi_2(u)(\chi_2(u),\chi_2(v))=
      \psi(\chi_2(u))=
      \chi_2\circ\varphi\circ\chi_2\circ\chi_2(u).
    \end{equation*}
    Since $\chi_2\circ\chi_2$ is the identity and
    $\varphi(u)=u(u,v)=u$,
    we conclude that
    $\chi_2(u)(\chi_2(u),\chi_2(v))=\chi_2(u)$.
    Similarly,
    $\chi_2(v)(\chi_2(u),\chi_2(v))=\chi_2(v)$.
    Therefore,
    $\chi_2(u)=\chi_2(v)$ is a right-permanent pseudoidentity
    and $\pv Z^\rev$ is a right-permanent pseudovariety.
  \end{proof}

We now give some examples. 
The five pseudoidentities
\begin{equation*}
x_1^\omega=x_1^\omega x_2,\quad
x_1^\omega=x_1^\omega x_2^\omega,\quad x_1^\omega=x_1^\omega x_2x_1^\omega,
\quad x_1^\omega=(x_1^\omega x_2x_1^\omega)^\omega,
\quad x_2^\omega=x_2^{\omega+1},
\end{equation*}
are left-permanent. The corresponding left-permanent pseudovarieties defined by
each of them are $\pv K$, $\pv K\vee \pv G$,
$\Lo I$, $\Lo G$ and $\pv A$, respectively.
The pseudovarieties
$\pv D$, $\pv D\vee \pv G$,
$\Lo I$, $\Lo G$
and $\pv A$ are right-permanent pseudovarieties,
a fact easy to verify directly, but that also follows
immediately from
Proposition~\ref{p:reverse-left}
because $\pv D=\pv K^\rev$,
$\pv D\vee \pv G=(\pv K\vee \pv G)^\rev$
and $\Lo I$, $\Lo G$, $\pv A$ are self-dual.
Therefore, $\pv N=\pv K\cap\pv D$ and
$\pv N\vee\pv G=(\pv K\vee \pv G)\cap (\pv D\vee \pv G)$
are permanent pseudovarieties.

In all the preceding examples, the permanent pseudoidentities
are formed by $\omega$-words. That is not the case of the following example.
For a prime~$p$, the pseudovariety $\pv G_p$ of $p$-groups
is defined by the pseudoidentity $1=x^{p^{\omega}}$,
where $x^{p^{\omega}}=\lim_{n\to\infty} x^{p^{n!}}$~\cite{Rhodes&Steinberg:2009},
so $\Lo {G}_p$ is defined by the left-permanent pseudoidentity
$x_1^\omega=(x_1^\omega x_2 x_1^\omega)^{p^\omega}$.

\begin{lema}\label{l:K-is-contained-left-permanent}
  If $\pv Z$ is a left-permanent semigroup pseudovariety then $\pv K\subseteq
  \pv Z$. Dually, 
  if $\pv Z$ is a right-permanent semigroup pseudovariety then $\pv D\subseteq
  \pv Z$.
\end{lema}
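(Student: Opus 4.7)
The plan is to show that every semigroup in $\pv K$ satisfies every left-permanent pseudoidentity; since $\pv Z$ is defined by a family of such pseudoidentities (being left-permanent), this will immediately give the inclusion $\pv K\subseteq\pv Z$. The dual statement will then follow formally from Proposition~\ref{p:reverse-left}.

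For the main step I would fix a left-permanent pseudoidentity $u=v$, keeping only the two conditions that actually play a role here: $u^2=u$ in $\Om 2S$, and $v=uv$ in $\Om 2S$. (The auxiliary conditions $u(u,v)=u$ and $v(u,v)=v$ from the definition are not needed to verify the pseudoidentity in $\pv K$; they are tailored for use elsewhere, e.g.\ to get the duality in Proposition~\ref{p:reverse-left} and to fit the Pin--Weil Mal'cev bases.) Given a finite $S\in\pv K$ and arbitrary $a,b\in S$, let $\hat u,\hat v\in S$ be the images of $u,v$ under the unique continuous homomorphism $\Om 2S\to S$ sending $x_1\mapsto a$ and $x_2\mapsto b$. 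From $u^2=u$ in $\Om 2S$ one obtains that $\hat u$ is idempotent in $S$, and since $S\in\pv K$ every idempotent of $S$ is a left zero, giving $\hat u\cdot \hat v=\hat u$. On the other hand, $v=uv$ in $\Om 2S$ yields $\hat v=\hat u\cdot\hat v$, and combining these two equalities gives $\hat u=\hat v$, so $S\models u=v$.

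For the dual claim I would simply invoke Proposition~\ref{p:reverse-left}: if $\pv Z$ is right-permanent then $\pv Z^\rev$ is left-permanent, so by the already established case $\pv K\subseteq\pv Z^\rev$; passing to duals and using that $\pv K^\rev=\pv D$ gives $\pv D\subseteq\pv Z$. There is no real obstacle to this argument; the only subtlety is to recognize that just two of the four clauses in the definition of left-permanence are needed, so the proof is essentially a one-line computation in any left-zero semigroup once the correct substitution is set up.
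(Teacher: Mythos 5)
Your proof is correct and takes essentially the same route as the paper: both hinge on the observation that $u=u^2$ forces $u$ to be evaluated to an idempotent, hence a left zero in any member of $\pv K$, so that $v=uv$ collapses to $u=v$, and both handle the dual via Proposition~\ref{p:reverse-left}. The only cosmetic difference is that you evaluate in an arbitrary $S\in\pv K$, while the paper condenses the same observation into one line at the level of the free pro-$\pv K$ semigroup (noting $u\notin\om 2S$ so $\pv K\models u=uv$).
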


\begin{proof}
  Let $u=v$ be a left-permanent pseudoidentity.
  Then $u=u^2$ and $v=uv$.
  The equality $u=u^2$ implies $u\notin \om 2S$,
  thus $\pv K\models u=uv$. That is,
  $\pv K\subseteq \op u=v\cl$, since $v=uv$.
  If $\pv Z$ is left-permanent, then
  it is the intersection of pseudovarieties
  of the form $\op u=v\cl$, with $u=v$ a left-permanent
  pseudoidentity,
  and so $\pv K\subseteq \pv Z$.
  If $\pv Z$ is right-permanent,
  we reduce to the left-permanent case
  by application of Proposition~\ref{p:reverse-left}.
\end{proof}

  \begin{defi}\label{def:collection-of-substitutions}
    {\rm
    Let $\Sigma$ be a set of pseudoidentities between elements
    of $\Om 2S$.
    For each $(u=v)\in\Sigma$, let
    $\mathscr {F}_{(u=v)}$ be a set of continuous homomorphisms
    of the form
    $\varphi:\Om 2S\to \Om {n(\varphi)}S$, where $n(\varphi)$ runs
    over the set of positive integers.
    Denote by  $\mathscr {F}$ the family $(\mathscr
    {F}_{(u=v)})_{(u=v)\in  \Sigma}$. Consider the set of
    pseudoidentities
    \begin{equation*}
      \Sigma_{\mathscr {F}}= \{\varphi(u)=\varphi(v)\mid (u=v)\in\Sigma,\,\varphi\in \mathscr {F}_{(u=v)}\}.
\end{equation*}
Let $\pv V$ be the pseudovariety $\op \Sigma_{\mathscr {F}}\cl$.
We say that
$\mathscr {F}$ is a
 \emph{$\Sigma$-collection of substitutions defining $\pv V$}.
}
  \end{defi}

We use the language introduced in
Definition~\ref{def:collection-of-substitutions} to
make the following statement of the Pin-Weil basis theorem
for Mal'cev products, for the special case where the first
pseudovariety in the Mal'cev product is defined by
a basis of pseudoidentities between elements of $\Om 2S$.
  
  \begin{teor}[cf.~{\cite[Theorem 4.1]{Pin&Weil:1996a}}]
    \label{t:base-for-malcev-product}
    Let $\pv Z$ and $\pv V$ be two pseudovarieties of semigroups.
    Suppose that $\pv Z$
    has a basis $\Sigma$ of pseudoidentities
    between elements of $\Om 2S$.
    Consider the set $\mathscr {H}$ of continuous homomorphisms
    of the form
    $\varphi:\Om 2S\to \Om {n(\varphi)}S$, where $n(\varphi)$ runs
    over the set of positive integers, such that
    $\pv V\models \varphi(x_1)=\varphi(x_2)=\varphi(x_2)^2$.
    For each $(u=v)\in\Sigma$, take $\mathscr {F}_{(u=v)}=\mathscr {H}$.
    Then $\pv Z\malcev\pv V=\op \Sigma_{\mathscr {F}}\cl$.
  \end{teor}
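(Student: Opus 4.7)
The plan is to deduce this statement as an immediate specialization of the general Pin--Weil basis theorem for Mal'cev products. In its general form, that theorem says that if $\pv Z$ is defined by a basis of pseudoidentities $u_i(x_1,\ldots,x_{n_i})=v_i(x_1,\ldots,x_{n_i})$ (with $i$ ranging over an index set), then $\pv Z\malcev\pv V$ is defined by all pseudoidentities $u_i(w_1,\ldots,w_{n_i})=v_i(w_1,\ldots,w_{n_i})$ obtained by substituting, for the variables, pseudowords $w_1,\ldots,w_{n_i}$ over some finite alphabet that are forced by $\pv V$ to be equal and idempotent, that is, satisfy $\pv V\models w_1=\cdots=w_{n_i}=w_1^2$. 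So the task reduces to rewriting the prescribed basis in the language of continuous homomorphisms from $\Om 2S$.

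First I would specialize the indexing. Since by hypothesis every pseudoidentity in $\Sigma$ lies between elements of $\Om 2S$, we have $n_i=2$ uniformly, and the admissible substitutions are pairs $(w_1,w_2)$ of pseudowords in some $\Om nS$ with $\pv V\models w_1=w_2=w_1^2$. By the universal property of $\Om 2S$ as the free pro-$\pv S$ semigroup on the two generators $x_1,x_2$, every such pair corresponds bijectively to a unique continuous homomorphism $\varphi\colon\Om 2S\to\Om nS$ via $\varphi(x_i)=w_i$. Since $\pv V\models w_1=w_2$ trivially implies $\pv V\models w_1^2=w_2^2$, the idempotency-and-equality condition on the pair is equivalent to $\pv V\models \varphi(x_1)=\varphi(x_2)=\varphi(x_2)^2$, which is exactly the defining property of the family $\mathscr H$.

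Now substituting $w_1=\varphi(x_1)$ and $w_2=\varphi(x_2)$ into a pseudoidentity $u=v$ from $\Sigma$ produces precisely $\varphi(u)=\varphi(v)$, because $\varphi$ is the unique continuous endomorphism whose effect on the generators agrees with the substitution. Thus the collection of pseudoidentities prescribed by the Pin--Weil theorem coincides on the nose with the set $\Sigma_{\mathscr F}$ of the statement, for the choice $\mathscr F_{(u=v)}=\mathscr H$. The equality $\pv Z\malcev\pv V=\op\Sigma_{\mathscr F}\cl$ follows.

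There is no real obstacle; the only minor points to check are that the universal property of $\Om 2S$ (which is a free pro-$\pv S$ semigroup on two generators) legitimises the bijection between substitutions and continuous homomorphisms, and that it suffices to restrict to homomorphisms with codomain a finitely generated free profinite semigroup $\Om nS$ rather than an arbitrary profinite semigroup. The latter is built into the Pin--Weil formulation, since pseudoidentities are witnessed against finite semigroups, so only finitely many parameters appear in any given substitution.
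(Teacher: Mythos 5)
Correct, and this is exactly what the paper leaves implicit: the theorem is labeled ``cf.\ Pin--Weil'' and given no proof, because it is meant to be read as the direct specialization of their Theorem~4.1 to the case where $\Sigma$ lives in $\Om 2S$, translated via the universal property of $\Om 2S$ into the language of continuous homomorphisms and $\Sigma$-collections. Your derivation spells out that translation accurately, including the small point that $\pv V\models w_1=w_2=w_1^2$ is equivalent to $\pv V\models\varphi(x_1)=\varphi(x_2)=\varphi(x_2)^2$.
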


 The ``only if'' part of the following lemma is crucial
  to obtain many of our main results, because of
  its application in
  the proof of Proposition~\ref{p:malcev-stability-preserved-by-D-2}.
  
\begin{lema}\label{l:malcev-idempotent-preliminar}
  Let $\pv Z$ be a pseudovariety of semigroups
  with a basis $\Sigma$ consisting of permanent pseudoidentities.
  Let $\pv V$ be a pseudovariety of semigroups.
  Then there is a $\Sigma$-collection of substitutions
  defining~$\pv V$
  if and only if $\pv Z\malcev\pv V=\pv V$.
\end{lema}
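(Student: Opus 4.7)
The plan is to apply the Pin-Weil basis theorem for Mal'cev products (Theorem~\ref{t:base-for-malcev-product}), which presents $\pv Z\malcev\pv V$ as $\op\Sigma_{\mathscr H}\cl$, where $\mathscr H$ is the collection of all continuous homomorphisms $\varphi\colon\Om 2S\to\Om{n(\varphi)}S$ satisfying $\pv V\models\varphi(x_1)=\varphi(x_2)=\varphi(x_2)^2$.

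The ``if'' direction is immediate: if $\pv Z\malcev\pv V=\pv V$, then Theorem~\ref{t:base-for-malcev-product} already exhibits $\pv V$ in the desired form, so taking $\mathscr F_{(u=v)}=\mathscr H$ for each $(u=v)\in\Sigma$ yields a $\Sigma$-collection of substitutions defining $\pv V$.

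For the ``only if'' direction, suppose $\mathscr F$ is a $\Sigma$-collection with $\pv V=\op\Sigma_{\mathscr F}\cl$. Since $\pv V\subseteq\pv Z\malcev\pv V$ always holds, it suffices to verify the reverse inclusion on a typical generator of $\pv Z\malcev\pv V$: a finite semigroup $S$ together with a homomorphism $\phi\colon S\to T$ into some $T\in\pv V$ for which $\phi^{-1}(e)\in\pv Z$ for every idempotent $e$ of~$T$. I would fix $(u=v)\in\Sigma$ (left-permanent, say; the right-permanent case is dual), $\psi\in\mathscr F_{(u=v)}$ with $\psi\colon\Om 2S\to\Om nS$, and arbitrary $s_1,\dots,s_n\in S$; setting $a_i$ to be the evaluation of $\psi(x_i)$ at $(s_1,\dots,s_n)$, together with $a=u(a_1,a_2)$ and $b=v(a_1,a_2)$, the goal reduces to showing $a=b$ in~$S$.

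The crucial step is to trap $a$ and $b$ in a common $\pv Z$-fiber of~$\phi$. Since $T\in\pv V$ satisfies $\psi(u)=\psi(v)$, naturality of evaluation under $\phi$ gives $\phi(a)=\phi(b)$, and the condition $u^2=u$ forces this common value $e$ to be idempotent in~$T$. Thus $a,b\in\phi^{-1}(e)\in\pv Z$, and applying $u=v$ inside $\phi^{-1}(e)$ via the substitution $x_1\mapsto a$, $x_2\mapsto b$ gives $u(a,b)=v(a,b)$. Finally, the remaining permanence conditions $u(u,v)=u$ and $v(u,v)=v$, applied this time through the substitution $x_1\mapsto a_1$, $x_2\mapsto a_2$, rewrite the two sides of this equation as $a$ and $b$ respectively, concluding $a=b$.

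The main obstacle is conceptual: keeping straight two distinct continuous homomorphisms $\Om 2S\to S$---the ``outer'' evaluation $x_i\mapsto a_i$ governed by the basis of $\pv V$, and the ``inner'' evaluation $x_1\mapsto a$, $x_2\mapsto b$ confined to the $\pv Z$-block $\phi^{-1}(e)$---and seeing that the three algebraic identities $u^2=u$, $u(u,v)=u$, $v(u,v)=v$ built into the notion of permanence are exactly what bridges them. Incidentally, the distinguishing conditions $v=uv$ or $v=vu$ that separate left- from right-permanence appear unnecessary in this argument.
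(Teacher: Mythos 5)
Your proof is correct, and for the harder direction (a $\Sigma$-collection of substitutions defining $\pv V$ implies $\pv Z\malcev\pv V=\pv V$) it takes a genuinely different route than the paper. The paper works entirely at the syntactic level: given $\varphi\in\mathscr F_{(u=v)}$, it observes that $\pv V\models\varphi(u)=\varphi(v)=\varphi(v)^2$, so the substitution $x_1\mapsto\varphi(u)$, $x_2\mapsto\varphi(v)$ lies in the set $\mathscr H$ of Theorem~\ref{t:base-for-malcev-product} applied to $(\pv Z,\pv V)$; the theorem then yields $\pv Z\malcev\pv V\models u(\varphi(u),\varphi(v))=v(\varphi(u),\varphi(v))$, and the identities $u(u,v)=u$, $v(u,v)=v$ collapse this to $\varphi(u)=\varphi(v)$. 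Thus the paper invokes the Pin--Weil basis theorem a second time in the forward direction. You instead verify the inclusion semantically on the generators of $\pv Z\malcev\pv V$: trapping the two evaluations $a,b$ in a common $\pv Z$-fibre of a relational morphism, applying $u=v$ inside the fibre, and then using $u(u,v)=u$, $v(u,v)=v$ to simplify. This bypasses the second appeal to Pin--Weil and is arguably more elementary; the paper's version is shorter once Theorem~\ref{t:base-for-malcev-product} is in hand. Your closing remark is also accurate and shared with the paper's proof: only $u=u^2$, $u(u,v)=u$, $v(u,v)=v$ are used here, not the distinguishing condition $v=uv$ or $v=vu$; the latter enters later (e.g.\ Lemma~\ref{l:K-is-contained-left-permanent} and Proposition~\ref{p:-when-global-is-Z-based-left-version}).
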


  \begin{proof}
      Let $\pv V$ be a pseudovariety of semigroups
  for which there is a $\Sigma$-collection $\mathscr {F}$ of substitutions
  defining~$\pv V$.
    Let $(u=v)\in \Sigma$ and $\varphi\in\mathscr {F}_{(u=v)}$.
    As $u=u^2$, we have $\varphi(u)=\varphi(u)^2$.
    Hence $\pv V\models \varphi(u)=\varphi(v)=\varphi(v)^2$.
    Since $\pv Z\models u=v$,
    from Theorem~\ref{t:base-for-malcev-product}
    we obtain
    $\pv Z\malcev \pv V\models
    u(\varphi(u),\varphi(v))=v(\varphi(u),\varphi(v))$.
    Note that, for each $w\in\{u,v\}$, we have
    $w(\varphi(u),\varphi(v))=\varphi(w(u,v))=\varphi(w)$.
    Hence $\pv Z\malcev \pv V\models \varphi(u)=\varphi(v)$.
    Therefore $\pv Z\malcev \pv V\models\Sigma_{\mathscr {F}}$,
    that is $\pv Z\malcev \pv V\subseteq \pv V$.
    The reverse
   inclusion is trivial.

     Conversely, if~$\pv Z\malcev \pv V=\pv V$, then it follows
     immediately from Theorem~\ref{t:base-for-malcev-product}
     that there is a $\Sigma$-collection of substitutions
     defining~$\pv V$.
  \end{proof}

  \begin{cor}\label{c:malcev-idempotent}
    Let
    $\pv Z$ be a permanent pseudovariety of semigroups.
    For every pseudovariety of semigroups $\pv V$
    we have $\pv Z\malcev(\pv Z\malcev \pv V)=\pv Z\malcev\pv V$.
    In particular, $\pv Z=\pv Z\malcev \pv Z$.
  \end{cor}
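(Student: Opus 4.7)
The plan is to apply Lemma~\ref{l:malcev-idempotent-preliminar} twice, using Theorem~\ref{t:base-for-malcev-product} to produce the required $\Sigma$-collection of substitutions in each case. Since $\pv Z$ is permanent, fix a basis $\Sigma$ of permanent pseudoidentities between elements of $\Om 2S$ defining $\pv Z$.

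For the main statement, I would let $\mathscr H$ denote the set of all continuous homomorphisms $\varphi\colon \Om 2S\to \Om{n(\varphi)}S$ satisfying $\pv V\models\varphi(x_1)=\varphi(x_2)=\varphi(x_2)^2$, and then set $\mathscr F_{(u=v)}=\mathscr H$ for every $(u=v)\in\Sigma$. By Theorem~\ref{t:base-for-malcev-product}, $\pv Z\malcev\pv V=\op\Sigma_{\mathscr F}\cl$, so $\mathscr F$ is a $\Sigma$-collection of substitutions defining $\pv Z\malcev\pv V$. Feeding this into the ``if'' direction of Lemma~\ref{l:malcev-idempotent-preliminar}, with $\pv Z\malcev\pv V$ playing the role of $\pv V$, yields the desired equality $\pv Z\malcev(\pv Z\malcev\pv V)=\pv Z\malcev\pv V$.

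For the ``in particular'' assertion, the cleanest route is to observe that taking $\mathscr F_{(u=v)}=\{id_{\Om 2S}\}$ for every $(u=v)\in\Sigma$ gives $\Sigma_{\mathscr F}=\Sigma$, so $\mathscr F$ is a $\Sigma$-collection of substitutions defining $\pv Z$ itself; a second application of Lemma~\ref{l:malcev-idempotent-preliminar} then delivers $\pv Z\malcev \pv Z=\pv Z$. (Alternatively, one could apply the main statement with $\pv V$ trivial, since $\pv Z\malcev \pv I=\pv Z$.)

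There is no real obstacle here: Lemma~\ref{l:malcev-idempotent-preliminar} and Theorem~\ref{t:base-for-malcev-product} together essentially encode the content of the corollary, and the proof amounts to noticing that the Pin--Weil basis of $\pv Z\malcev\pv V$ is itself of the required $\Sigma_{\mathscr F}$ shape. The only point one needs to be careful about is that $\mathscr H$ is the \emph{same} set of homomorphisms for every pseudoidentity in $\Sigma$, which is allowed by Definition~\ref{def:collection-of-substitutions}.
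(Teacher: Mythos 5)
Your proposal is correct and follows essentially the same route as the paper: the paper's one-line proof is precisely your step of noting that Theorem~\ref{t:base-for-malcev-product} exhibits $\pv Z\malcev\pv V$ as $\op\Sigma_{\mathscr F}\cl$ for a $\Sigma$-collection $\mathscr F$, and then invoking the ``if'' direction of Lemma~\ref{l:malcev-idempotent-preliminar} with $\pv Z\malcev\pv V$ in the role of $\pv V$. Your two suggested derivations of the ``in particular'' clause (via the identity substitution, or via $\pv Z\malcev\pv I=\pv Z$) are both sound and merely make explicit what the paper leaves implicit.
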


  \begin{proof}
         By Theorem~\ref{t:base-for-malcev-product},
         $\pv Z\malcev \pv V$
         is defined by a $\Sigma$-collection of substitutions,
         so the result follows immediately
         from Lemma~\ref{l:malcev-idempotent-preliminar}.
  \end{proof}

\section{Main results and applications}\label{sec:main-results-appl}

\subsection{Interplay between operators $\pv Z\malcev(\_)$ and
  $(\_)\ast\pv {D}_k$}

The proof of the following theorem is deferred to
Section~\ref{sec:proof-theorem-reft:z}. Recall that $B_2$ denotes the
five-element Brandt semigroup, and that $B_2\notin\pv V$ if and only if
$\pv V\subseteq\pv {DS}$,
for every
semigroup pseudovariety $\pv V$~\cite[Lemma 2.2.3]{Rhodes&Steinberg:2009}.
  
  \begin{teor}
    \label{t:z-malc-ast-d}
        Let $\pv Z$ be a permanent pseudovariety of
        semigroups.
        Let $\pv V$ be a pseudovariety of semigroups
        such that $\pv Z\malcev \pv V=\pv V$.
        Suppose moreover that
        $B_2\in\pv V$, or that
        $\pv V$ is local and contains some nontrivial monoid.
        Then        
         \begin{equation*}
           \pv Z\malcev(\pv V\ast\pv D_k)=\pv V\ast\pv D_k
         \end{equation*}
      for every~$k\geq 1$.
  \end{teor}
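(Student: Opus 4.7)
The inclusion $\pv V\ast\pv D_k\subseteq \pv Z\malcev(\pv V\ast\pv D_k)$ is immediate from the definition of the Mal'cev product, so the real content is the reverse inclusion $\pv Z\malcev(\pv V\ast\pv D_k)\subseteq\pv V\ast\pv D_k$.

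The plan is to apply Lemma~\ref{l:malcev-idempotent-preliminar} to the pseudovariety $\pv V\ast\pv D_k$. First I would fix a basis $\Sigma$ of $\pv Z$ consisting of permanent pseudoidentities, which exists because $\pv Z$ is permanent. The problem then reduces to exhibiting a $\Sigma$-collection of substitutions $\mathscr G$ defining $\pv V\ast\pv D_k$. The hypothesis $\pv Z\malcev \pv V=\pv V$ supplies, via the ``only if'' direction of that same lemma, a $\Sigma$-collection $\mathscr F$ defining $\pv V$; so the task becomes that of lifting $\mathscr F$ to $\mathscr G$.

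I would carry out this lifting by composing each $\varphi\in\mathscr F_{(u=v)}$ with suitable ``length-$k$ context'' substitutions, as dictated by the Almeida--Azevedo--Teixeira basis theorem for $\pv V\ast\pv D_k$~\cite{Almeida&Azevedo&Teixeira:1998}, which expresses a basis of $\pv V\ast\pv D_k$ in terms of a basis of $\pv V$ after prepending or appending length-$k$ words over suitably extended alphabets. The permanence of each $(u=v)\in\Sigma$---in particular the identities $u=u^2$ and $v=uv$ in the left case, or $v=vu$ in the right case, together with the self-substitution relations $u(u,v)=u$ and $v(u,v)=v$---is what allows the contextualised Almeida--Azevedo--Teixeira basis of $\pv V\ast\pv D_k$ to be rewritten in the form $\{\psi(u)=\psi(v)\}$ required by Definition~\ref{def:collection-of-substitutions}, with each $\psi$ obtained from some $\varphi\in\mathscr F_{(u=v)}$ by prefix or suffix extension according to the side of the permanence of $u=v$.

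I expect the hard step to be the Pin--Weil idempotency verification $\pv V\ast\pv D_k\models\psi(x_1)=\psi(x_2)=\psi(x_2)^2$ that promotes the family of $\psi$'s to a genuine $\Sigma$-collection. The analogous condition is ensured in $\pv V$ by construction of $\mathscr F$, but its preservation upon passage to $\pv V\ast\pv D_k$ is not automatic, and this is exactly where the case split on $\pv V$ enters: when $B_2\in\pv V$, the enlarged idempotent structure inherited from $B_2\notin\pv{DS}$ should let one absorb the length-$k$ prefix into an already-present idempotent of a generic $\pv V$-model so that the equation survives; when $\pv V$ is local and contains a nontrivial monoid, locality allows verification in $\pv V\ast\pv D_k$ to be rerouted through the derived category of an appropriate relational morphism, where the $\pv V$-level idempotency provided by $\mathscr F$ (together with the existence of an identity from the monoidal hypothesis) suffices to conclude. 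Once this condition is verified in both cases, the ``if'' direction of Lemma~\ref{l:malcev-idempotent-preliminar} delivers the equality $\pv Z\malcev(\pv V\ast\pv D_k)=\pv V\ast\pv D_k$ for every $k\ge 1$.
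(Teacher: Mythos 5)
Your plan correctly identifies the endgame: build a $\Sigma$-collection of substitutions defining $\pv V\ast\pv D_k$ and invoke Lemma~\ref{l:malcev-idempotent-preliminar}. This is indeed what the paper does. But the route you sketch through the middle is off in two ways that matter, and together they mean your argument as stated would not close.

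First, the Almeida--Azevedo--Teixeira basis machinery for $\pv V\ast\pv D_k$ does not express a basis of $\pv V\ast\pv D_k$ directly in terms of a basis of $\pv V$; it expresses it in terms of a basis of the \emph{semigroupoid} pseudovariety $g\pv V$. So you cannot simply ``lift'' the $\Sigma$-collection $\mathscr F$ for $\pv V$ by composing with length-$k$ context substitutions. You must first obtain a basis of $g\pv V$ of a controlled form, and that is the hard step. The paper isolates this as the notion of $g\pv V$ being ``left $\pv Z$-based'' (Definition~\ref{def:global-Z-based}, Proposition~\ref{p:-when-global-is-Z-based-left-version}): every basis pseudoidentity for $g\pv V$ must be a path pseudoidentity of the form $(u_L(r_1,r_2)=v_L(r_1,r_2);A)$ with $(u=v)\in\Sigma$ and $r_1,r_2$ \emph{loops at a common vertex}. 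Producing such a basis requires a nontrivial analysis of the graphs $A_{\varphi(v)}$, in particular showing (using $\pv K\subseteq\pv Z$, which comes from left-permanence via Lemma~\ref{l:K-is-contained-left-permanent}, and the relations $v=uv$, $u=u^2$) that the interpretations of $\varphi(x_1)$ and $\varphi(x_2)$ as profinite paths are co-terminal loops. Your plan contains no step of this kind.

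Second, the case split on $\pv V$ does \emph{not} enter where you place it, in the Pin--Weil idempotency verification for the lifted $\psi$'s. In fact no such separate verification is needed: Lemma~\ref{l:malcev-idempotent-preliminar} asks only that $\pv V\ast\pv D_k=\op\Sigma_{\mathscr G}\cl$ for some family $\mathscr G$ of continuous homomorphisms, and its proof internally uses the substitution $x_1\mapsto\varphi(u)$, $x_2\mapsto\varphi(v)$, whose Pin--Weil condition is satisfied automatically because $u=u^2$. The hypotheses ``$B_2\in\pv V$ or $\pv V$ is local'' are used earlier, to obtain the basis of $g\pv V$ in the first place: locality gives $g\pv V=\ell\pv V$, and a basis of $\ell\pv V$ consists of one-vertex path pseudoidentities $(\varphi(u)=\varphi(v);X_\varphi)$, trivially left $\pv Z$-based; the hypothesis $B_2\in\pv V$ is exactly what is required for Theorem~\ref{t:basis-V-to-gV} (the Almeida--Azevedo--Teixeira global basis theorem) to apply and deliver path pseudoidentities over the graphs $A_{\varphi(v)}$. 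Once $g\pv V$ is left $\pv Z$-based, Proposition~\ref{p:malcev-stability-preserved-by-D-2} uses the compatibility condition of the Almeida--Weil basis together with the factorization $u_L(r_1,r_2)=ts_1\cdots s_n$ (available because $u_L(r_1,r_2)$ is idempotent, hence infinite) to absorb the prefix words $\pi_c$, reducing the basis pseudoidentities to the form $\psi_\delta(u)=\psi_\delta(v)$. Without these intermediate steps, your plan is missing the mechanism by which the hypotheses actually do work.
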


  As a corollary of Theorem~\ref{t:z-malc-ast-d}, we get the following result.
  
  \begin{teor}
    \label{t:more-z-malc-ast-d}
        Let $\pv Z$ be a permanent pseudovariety of
        semigroups.
        Let $\pv V$ be a pseudovariety of semigroups
        such that $B_2\in\pv Z\malcev\pv V$. Then
        \begin{equation*}
          \pv Z\malcev(\pv V\ast\pv D_k)\subseteq
          (\pv Z\malcev\pv V)\ast\pv D_k
       \end{equation*}
       for every~$k\geq 1$.
  \end{teor}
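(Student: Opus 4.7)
The plan is to reduce Theorem~\ref{t:more-z-malc-ast-d} to Theorem~\ref{t:z-malc-ast-d} by substituting $\pv W := \pv Z\malcev\pv V$ in the role of ``$\pv V$''. The key observation is that $\pv Z\malcev(\_)$ is idempotent on its image, so the hypothesis $\pv Z\malcev\pv W = \pv W$ required by Theorem~\ref{t:z-malc-ast-d} comes for free.

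Concretely, set $\pv W = \pv Z\malcev \pv V$. Since $\pv Z$ is permanent, Corollary~\ref{c:malcev-idempotent} yields $\pv Z\malcev\pv W = \pv W$. The hypothesis $B_2\in\pv Z\malcev\pv V$ translates to $B_2\in\pv W$, so both hypotheses of Theorem~\ref{t:z-malc-ast-d} hold with $\pv W$ in place of $\pv V$ (via the first branch of its disjunction). Applying that theorem yields
\begin{equation*}
  \pv Z\malcev(\pv W\ast\pv D_k) = \pv W\ast\pv D_k \quad\text{for every } k\geq 1.
\end{equation*}

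To conclude, recall from the preliminaries that the Mal'cev product of two semigroup pseudovarieties contains its right factor, so $\pv V\subseteq\pv W$. By monotonicity of the semidirect product in its left variable, $\pv V\ast\pv D_k\subseteq\pv W\ast\pv D_k$, and by monotonicity of the Mal'cev product in its right variable,
\begin{equation*}
  \pv Z\malcev(\pv V\ast\pv D_k)\subseteq \pv Z\malcev(\pv W\ast\pv D_k) = \pv W\ast\pv D_k = (\pv Z\malcev\pv V)\ast\pv D_k,
\end{equation*}
which is exactly the desired inclusion.

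There is no real obstacle to this argument beyond recognising that the image of the operator $\pv Z\malcev(\_)$ consists of fixed points (which is where permanence of $\pv Z$ is used, via Corollary~\ref{c:malcev-idempotent}); the hypothesis $B_2\in\pv Z\malcev\pv V$ is then precisely what is needed to trigger Theorem~\ref{t:z-malc-ast-d} for $\pv W$. All the substantive content of the argument is thus packaged into Theorem~\ref{t:z-malc-ast-d}, whose proof is deferred to Section~\ref{sec:proof-theorem-reft:z}.
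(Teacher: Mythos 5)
Your proof is correct and matches the paper's argument exactly: both apply Corollary~\ref{c:malcev-idempotent} to see that $\pv Z\malcev(\pv Z\malcev\pv V)=\pv Z\malcev\pv V$, then invoke Theorem~\ref{t:z-malc-ast-d} with $\pv Z\malcev\pv V$ in the role of the second argument, and finish by the monotonicity observation $\pv Z\malcev(\pv V\ast\pv D_k)\subseteq\pv Z\malcev((\pv Z\malcev\pv V)\ast\pv D_k)$. Your write-up merely spells out the monotonicity step (via $\pv V\subseteq\pv Z\malcev\pv V$) that the paper labels ``clearly.''
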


  \begin{proof}
    By Corollary~\ref{c:malcev-idempotent},
    we have $\pv Z\malcev(\pv Z\malcev\pv V)=\pv Z\malcev\pv V$.
    Hence, from Theorem~\ref{t:z-malc-ast-d}
    we obtain the equality
    \begin{equation*}
    \pv Z\malcev((\pv Z\malcev\pv V)\ast\pv D_k) =(\pv Z\malcev\pv
    V)\ast\pv D_k.
    \end{equation*}
    On the other hand,
    clearly
    $\pv Z\malcev(\pv V\ast\pv D_k)\subseteq \pv Z\malcev((\pv
    Z\malcev\pv V)\ast\pv D_k)$.
  \end{proof}

  Recall that all pseudovarieties of $\mathbb V$ are permanent.
  Let $\pv Z\in\mathbb V$.
  Then $\pv {DS}=\pv Z\malcev \pv {DS}$. This can be easily
  deduced directly, using the fact that $B_2\notin\pv V$ if and only if
  $\pv V\subseteq \pv {DS}$, for every semigroup pseudovariety $\pv
  V$.
  Alternatively, one can deduce it
  from equality 
  $\pv {DS}=\Lo G\malcev \pv
  {Sl}$~\cite{Putcha:1973,Schutzenberger:1976}
  and inclusion $\pv Z\subseteq \Lo G$.
  Hence, if $\pv V\subseteq \pv {DS}$
  then $B_2\notin\pv Z\malcev\pv V$ and so
  Theorem~\ref{t:more-z-malc-ast-d} does not apply.
    
 \begin{teor}\label{t:more-K-malc-ast-d}
  Let $\pv Z\in \{\pv K,\pv D,\Lo I\}$.
  If $\pv V$
  is a pseudovariety
  of semigroups containing $\pv {Sl}$ then
         \begin{equation}
          \label{eq:special-cases-KDLI-1}
          \pv Z\malcev(\pv V\ast\pv D_k)
          \subseteq (\pv Z\malcev\pv V)\ast\pv D_k
        \end{equation}
for all $k\geq 1$.  
\end{teor}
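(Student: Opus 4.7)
The plan is to split into cases and reduce to one essential computation. First, if $\pv Z \malcev \pv V \not\subseteq \pv {DS}$, then $B_2 \in \pv Z \malcev \pv V$, and Theorem~\ref{t:more-z-malc-ast-d} yields the conclusion immediately. So from now on I would assume $\pv V \subseteq \pv Z \malcev \pv V \subseteq \pv {DS}$, which is the regime Theorem~\ref{t:more-z-malc-ast-d} does not handle.

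Second, I would reduce the three choices of $\pv Z$ to the case $\pv Z = \pv K$. The case $\pv Z = \pv D$ follows by duality: using Propositions~\ref{p:reverse-malcev} and~\ref{p:reverse-VastD} (noting that $\pv V$ contains the nontrivial monoid $\pv {Sl}$), the $\pv D$-inclusion for $\pv V$ is the reverse of the $\pv K$-inclusion applied to $\pv V^\rev$, which still contains the self-dual $\pv {Sl}$. For $\pv Z = \Lo I$, I would iterate: since $\Lo I \malcev (\_)$ is the infinite alternating composition of $\pv K \malcev (\_)$ and $\pv D \malcev (\_)$~\cite{Auinger&Hall&Reilly&Zhang:1995}, setting $\pv V_0 = \pv V$ and defining $\pv V_{i+1}$ by alternately applying $\pv K \malcev$ and $\pv D \malcev$, each $\pv V_i$ still contains $\pv {Sl}$, so the $\pv K$ and $\pv D$ cases give inductively the analogous nested inclusion for the iterates of $\pv V \ast \pv D_k$; taking joins over $i$ yields the desired $\Lo I$-inclusion.

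It remains to prove the inclusion for $\pv Z = \pv K$ with $\pv V \subseteq \pv {DS}$. By Theorem~\ref{t:base-for-malcev-product} applied to the left-permanent basis $\{x_1^\omega = x_1^\omega x_2\}$ of $\pv K$, this is equivalent to showing that, for every continuous homomorphism $\varphi : \Om 2 S \to \Om n S$ satisfying $\pv V \ast \pv D_k \models \varphi(x_1) = \varphi(x_2) = \varphi(x_2)^2$, the pseudoidentity $\varphi(x_1)^\omega = \varphi(x_1)^\omega \varphi(x_2)$ holds in $(\pv K \malcev \pv V) \ast \pv D_k$. This final assertion is the main obstacle, and it is precisely where the proof must lean heavily on the locality of $\pv {DS}$ together with the detailed description of implicit operations on $\pv {DS} \ast \pv D_k$ developed in Section~\ref{sec:pseud-pv-dsastpv}. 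The idea I would pursue is to use the locality of $\pv {DS}$ (which forces the semigroupoid generated by $\pv V$ to be determined by its local semigroups) in order to transfer the idempotent-type equalities forced on $\varphi(x_1)$ and $\varphi(x_2)$ in $\pv V \ast \pv D_k$ down to analogous equalities at the $\pv V$-level, modulo the $k$-prefix behaviour controlled by $\pv D_k$, and then feed these back into Theorem~\ref{t:base-for-malcev-product} applied to $\pv K \malcev \pv V$ to extract the required pseudoidentity.
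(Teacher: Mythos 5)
Your reduction steps match the paper's strategy exactly: split on whether $B_2\in\pv Z\malcev\pv V$ (covered by Theorem~\ref{t:more-z-malc-ast-d}), reduce $\pv D$ to $\pv K$ by duality via Propositions~\ref{p:reverse-malcev} and~\ref{p:reverse-VastD}, and obtain $\Lo I$ by iterating the alternating composition from~\cite{Auinger&Hall&Reilly&Zhang:1995}. Those pieces are fine and correctly argued.

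The gap is in the remaining case $\pv Z=\pv K$ with $\pv V\subseteq\pv{DS}$, which is the entire content of the theorem. You correctly name the ingredients (locality of $\pv{DS}$, the structure of implicit operations on $\pv{DS}\ast\pv D_k$), but you stop at a plan (``the idea I would pursue is\ldots'') without giving the argument, and your framing via the Pin--Weil basis of $\pv K\malcev(\pv V\ast\pv D_k)$ for arbitrary $k$ faces a real obstruction you do not address: for $k>1$ the ``seam'' letters between consecutive factors in the iterated left basic factorization of $\Phi_k(u)$ are no longer determined by the prefix/suffix data $\te k(u_i)\be k(u_{i+1})$, which makes the factorization analysis much harder. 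The paper sidesteps this by first proving only the $k=1$ case (Propositions~\ref{p:adaptation-of-result-of-tw} and~\ref{p:basis-method}), and then bootstrapping to general $k$ through the identity $\pv V\ast\pv D_k=(\pv V\ast\pv D_1)\ast\pv D_{k-1}$ combined with the fact that $B_2\in\pv V\ast\pv D_1$, which puts $\pv V\ast\pv D_1$ back in the regime where Theorem~\ref{t:more-z-malc-ast-d} applies. Your proposal misses this bootstrap entirely. Even for $k=1$, the substance is the induction on $|c_2(u)|$ via $\pv{ilbf}_2$, Lemma~\ref{l:useful-for-the-induction}, the regularity transfer in Theorem~\ref{t:regularity-up-and-down}, the Green's-relations comparison in Theorem~\ref{t:adapt-1-8-i}, and the cancellation argument of Lemma~\ref{l:argument-used-by-pin-weil}; none of this is reproduced or replaced by an alternative. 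As it stands, the proposal identifies the problem correctly and reduces it correctly, but leaves the central proof unwritten.
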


 Note that if $B_2\in\pv V$ then~(\ref{eq:special-cases-KDLI-1}) holds
 by Theorem~\ref{t:more-z-malc-ast-d}.
 We shall prove Theorem~\ref{t:more-K-malc-ast-d}
 in Section~\ref{sec:proof-theor-reft:m}, after
 some preparations in Section~\ref{sec:pseud-pv-dsastpv}
 for dealing with the case
 in which $B_2\notin\pv V$.

 Theorem~\ref{t:more-K-malc-ast-d}
 has the following immediate corollary, which improves
 Theorem~\ref{t:z-malc-ast-d} in some cases.
 
 \begin{cor}\label{c:improves-stability}
   Let $\pv Z\in \{\pv K,\pv D,\Lo I\}$.
  If $\pv V$
  is a pseudovariety
  of semigroups containing $\pv {Sl}$ and such that
  $\pv Z\malcev\pv V=\pv V$, then
           \begin{equation*}
          \pv Z\malcev(\pv V\ast\pv D_k)=\pv V\ast\pv D_k
        \end{equation*}
for all $k\geq 1$.  
 \end{cor}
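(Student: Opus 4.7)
The plan is to observe that this really is an immediate consequence of Theorem~\ref{t:more-K-malc-ast-d}, combined with the trivial inclusion of the second factor into a Mal'cev product of semigroup pseudovarieties. Both inclusions in the claimed equality come essentially for free from the hypothesis.

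First I would establish the forward inclusion. Since $\pv {Sl}\subseteq \pv V$ by hypothesis, Theorem~\ref{t:more-K-malc-ast-d} applies directly to $\pv V$, giving
\begin{equation*}
  \pv Z\malcev(\pv V\ast\pv D_k)\subseteq (\pv Z\malcev \pv V)\ast\pv D_k
\end{equation*}
for every $k\ge 1$. Then I would substitute the standing hypothesis $\pv Z\malcev\pv V=\pv V$ on the right-hand side to obtain $\pv Z\malcev(\pv V\ast\pv D_k)\subseteq \pv V\ast\pv D_k$.

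For the reverse inclusion I would invoke the general fact, recalled in the preliminaries, that the Mal'cev product of two semigroup pseudovarieties contains the second factor. Since $\pv V\ast\pv D_k$ is a semigroup pseudovariety (as noted in the introduction, regardless of whether $\pv V$ is given as a monoid or semigroup pseudovariety), this yields $\pv V\ast\pv D_k\subseteq \pv Z\malcev(\pv V\ast\pv D_k)$, completing the equality.

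There is no real obstacle here: the content of the corollary is entirely absorbed into Theorem~\ref{t:more-K-malc-ast-d}, whose proof is the substantive work (deferred to Section~\ref{sec:proof-theor-reft:m} and relying on the analysis of implicit operations on $\pv{DS}\ast\pv D_k$). The only minor point worth flagging in the write-up is that $\pv V\ast\pv D_k$ is being treated as a semigroup pseudovariety so that the containment of the second factor in the Mal'cev product is valid in the form stated.
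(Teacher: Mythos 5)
Your proof is correct and follows exactly the route the paper intends: the paper states the corollary without proof, calling it an ``immediate corollary'' of Theorem~\ref{t:more-K-malc-ast-d}, and your two-line derivation (apply Theorem~\ref{t:more-K-malc-ast-d} and substitute the idempotency hypothesis, then invoke the general fact that a Mal'cev product of semigroup pseudovarieties contains its second factor) is precisely that immediate argument.
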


\subsection{Preservation of locality}

By Tilson's Delay Theorem~\cite{Tilson:1987},
a monoid pseudovariety $\pv V$ is local
if and only if $\Lo V=\pv V\ast\pv D$.
In contrast, there are non-local semigroup pseudovarieties
that are solutions of equation~$\Lo X=\pv X\ast\pv D$.
That is the case of the nontrivial pseudovarieties of groups
viewed as semigroup pseudovarieties (cf.~\cite[Theorem 10.6.14]{Almeida:1994a} and the
discussion in~\cite[page 14]{Almeida&Weil:1996});
on the other hand,
nontrivial pseudovarieties of groups are local as monoid pseudovarieties~\cite{Straubing:1985}.
But if $\pv V$ is a monoid pseudovariety not contained in $\pv G$,
then $\pv V$ is local if and only if
$\pv V_{\pv S}$ is local, from which it follows that
for monoidal pseudovarieties of semigroups not contained
in $\pv G$, there is equivalence between being local and being a solution
of $\Lo X=\pv X\ast\pv D$~\cite{Almeida&Weil:1996}.

\begin{lema}\label{l:conditions-for-the-locality-of-ZmV}
  Let $\pv Z$ and $\pv V$ be pseudovarieties
  of semigroups for which we have
  $\Lo {(\pv Z\malcev \pv V)}=\pv Z\malcev \Lo V$
  and $\pv Z\malcev(\pv V\ast\pv D)\subseteq (\pv Z\malcev\pv
  V)\ast\pv D$.
  If $\pv V$ is
  a solution of the equation $\Lo X=\pv X\ast\pv D$, then so is
  $\pv Z\malcev \pv V$.
  In particular, if~$\pv V$ is monoidal and local,
  then $\pv Z\malcev \pv V$ is local.
\end{lema}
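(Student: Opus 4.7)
The plan is to read off the equation $\Lo{(\pv Z \malcev \pv V)} = (\pv Z \malcev \pv V) \ast \pv D$ from a short chain of containments and then convert this into locality via the standard equivalence recalled just before the lemma. Specifically, I would write
\begin{equation*}
\Lo{(\pv Z \malcev \pv V)} = \pv Z \malcev \Lo V = \pv Z \malcev (\pv V \ast \pv D) \subseteq (\pv Z \malcev \pv V) \ast \pv D \subseteq \Lo{(\pv Z \malcev \pv V)},
\end{equation*}
where the first equality is the first stated hypothesis, the second rewrites $\Lo V$ using the assumption that $\pv V$ solves $\Lo X = \pv X \ast \pv D$, the third step is the second stated hypothesis, and the last is the general inclusion $\pv W \ast \pv D \subseteq \Lo W$ valid for every semigroup pseudovariety $\pv W$ (one of the two directions of Tilson's Delay Theorem). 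Since the extremes coincide, every inclusion must be an equality, so $\pv Z \malcev \pv V$ is a solution of $\Lo X = \pv X \ast \pv D$.

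For the ``in particular'' clause, I would invoke the equivalence, noted in the paragraph immediately preceding the statement, between the locality of a monoidal semigroup pseudovariety and its being a solution of $\Lo X = \pv X \ast \pv D$ (itself a reformulation of Tilson's Delay Theorem). The assumption that $\pv V$ is monoidal and local therefore yields $\Lo V = \pv V \ast \pv D$, putting us into the hypothesis of the first part and producing $\Lo{(\pv Z \malcev \pv V)} = (\pv Z \malcev \pv V) \ast \pv D$. Because $\pv Z \malcev \pv V$ inherits monoidality from $\pv V$ (via Proposition \ref{p:malcev-sgp-vs-monoid}, together with a trivial separate check in the few boundary cases where $\pv V$ fails to contain $\pv{Sl}$), the same equivalence read in the reverse direction delivers locality of $\pv Z \malcev \pv V$.

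No real obstacle arises: the entire argument is an accounting of the hypotheses. The only points that merit an explicit line of verification are the general containment $\pv W \ast \pv D \subseteq \Lo W$ and the monoidality transfer ``$\pv V$ monoidal $\Rightarrow$ $\pv Z \malcev \pv V$ monoidal''; both are routine and the latter is essentially Proposition \ref{p:malcev-sgp-vs-monoid}.
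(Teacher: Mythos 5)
Your proof is correct and follows essentially the same route as the paper's: the same chain of containments using the two hypotheses, closed off by the general inclusion $\pv W\ast\pv D\subseteq \Lo W$, followed by the Tilson-Delay reformulation of locality for monoidal pseudovarieties. The only cosmetic difference is that where you allow for ``a trivial separate check'' when $\pv V\not\supseteq\pv{Sl}$, the paper instead notes (via the remarks preceding the lemma) that a local monoidal $\pv V$ cannot be contained in $\pv G$, which already forces $\pv{Sl}\subseteq\pv V$ and makes Proposition~\ref{p:malcev-sgp-vs-monoid} directly applicable.
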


\begin{proof}
  By hypothesis,
  $\Lo {(\pv Z\malcev \pv V)}
  =\pv Z\malcev \Lo V=\pv Z\malcev(\pv V\ast\pv D)
  \subseteq (\pv Z\malcev\pv V)\ast\pv D$,
  and so $\pv Z\malcev \pv V$ is a solution of the equation
  $\Lo X=\pv X\ast\pv D$ (recall that $\pv W\ast\pv D\subseteq \Lo W$
for every pseudovariety of semigroups~$\pv W$~\cite[Proposition
10.6.13]{Almeida:1994a}).

Suppose now that $\pv V$ is monoidal and local. By the remarks
before the statement of the lemma, $\pv V\nsubseteq \pv G$
and $\pv V$ is a solution of the equation
$\Lo X=\pv X\ast\pv D$. As we proved in the previous paragraph,
$\pv Z\malcev\pv V$ is also a solution
of the equation, and so, since $\pv Z\malcev\pv V$ is monoidal
(by Proposition~\ref{p:malcev-sgp-vs-monoid}), we conclude
that $\pv Z\malcev\pv V$ is local.
\end{proof}

\begin{teor}\label{t:locality-of-ZmV}
  Let $\pv Z\in\mathbb V$
  and let $\pv V$ be a monoidal pseudovariety of semigroups such that
  $B_2\in \pv V$.
  If $\pv V$ is local then
  $\pv Z\malcev \pv V$ is local.
\end{teor}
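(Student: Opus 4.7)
The plan is to assemble three ingredients already established in the paper: Proposition~\ref{p:local-operator-comutes}, Theorem~\ref{t:more-z-malc-ast-d}, and Lemma~\ref{l:conditions-for-the-locality-of-ZmV}. Since all members of $\mathbb V$ are permanent, $\pv Z$ is permanent. The Mal'cev product of two semigroup pseudovarieties always contains its right factor, so $\pv V \subseteq \pv Z\malcev\pv V$, and the hypothesis $B_2\in\pv V$ gives $B_2\in\pv Z\malcev\pv V$. This is exactly the trigger condition for Theorem~\ref{t:more-z-malc-ast-d}.

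First I would apply Theorem~\ref{t:more-z-malc-ast-d} to obtain the inclusion $\pv Z\malcev(\pv V\ast\pv D_k)\subseteq (\pv Z\malcev\pv V)\ast\pv D_k$ for every $k\geq 1$. To pass from $\pv D_k$ to $\pv D=\bigcup_{k\ge 1}\pv D_k$, I would use that both $(\_)\ast\pv W$ for fixed $\pv W$ and $\pv Z\malcev(\_)$ commute with directed unions of pseudovarieties: for the semidirect product this is standard, and for the Mal'cev product it is immediate from the definition, since a finite semigroup mapping to an element of $\bigcup_k \pv W_k$ must map to an element of some single $\pv W_k$. This yields
\begin{equation*}
\pv Z\malcev(\pv V\ast\pv D)=\bigcup_{k\ge 1}\pv Z\malcev(\pv V\ast\pv D_k)\subseteq \bigcup_{k\ge 1}(\pv Z\malcev\pv V)\ast\pv D_k=(\pv Z\malcev\pv V)\ast\pv D.
\end{equation*}

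Proposition~\ref{p:local-operator-comutes} supplies the other hypothesis of Lemma~\ref{l:conditions-for-the-locality-of-ZmV}, namely $\Lo{(\pv Z\malcev\pv V)}=\pv Z\malcev\Lo V$. Since $\pv V$ is monoidal and local, the lemma directly delivers the locality of $\pv Z\malcev\pv V$.

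No step presents a real obstacle; the theorem is a bookkeeping consequence of the main technical work already done in Section~\ref{sec:perm-pseud} (where permanence was introduced and the Pin--Weil basis theorem applied to prove the stability of Mal'cev products) and of the reduction to the $\pv D_k$-setting carried out earlier in the present section. The only point that deserves a brief mention in the write-up is the interchange of Mal'cev product with the directed union $\pv D=\bigcup_k \pv D_k$, so that the hypothesis of Lemma~\ref{l:conditions-for-the-locality-of-ZmV} is obtained from Theorem~\ref{t:more-z-malc-ast-d} verbatim.
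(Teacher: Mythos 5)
Your proposal is correct and follows exactly the paper's route: Proposition~\ref{p:local-operator-comutes} and Theorem~\ref{t:more-z-malc-ast-d} supply the two hypotheses of Lemma~\ref{l:conditions-for-the-locality-of-ZmV}, which then yields the conclusion. The only thing you spell out that the paper leaves implicit is the passage from the $\pv D_k$-inclusions of Theorem~\ref{t:more-z-malc-ast-d} to the $\pv D$-inclusion required by the lemma, via directed unions; that step is indeed needed and your justification of it is sound.
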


\begin{proof}
  Thanks to Proposition~\ref{p:local-operator-comutes}
  and Theorem~\ref{t:more-z-malc-ast-d}, we are in the conditions of
  Lemma~\ref{l:conditions-for-the-locality-of-ZmV}, and so one gets
  immediately the theorem.
\end{proof}
  
\begin{teor}\label{t:locality-of-kmv}
  Let $\pv V$ be a monoidal pseudovariety of semigroups.
  If $\pv V$ is local then the pseudovarieties of semigroups
  $\pv K\malcev \pv V$,
  $\pv D\malcev \pv V$ and $\Lo I\malcev \pv V$
  are local.
\end{teor}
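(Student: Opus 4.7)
The plan is to invoke Lemma~\ref{l:conditions-for-the-locality-of-ZmV} for each $\pv Z\in\{\pv K,\pv D,\Lo I\}$, which reduces the locality of $\pv Z\malcev\pv V$ to verifying two conditions:
\begin{enumerate}
  \item[(a)] $\Lo{(\pv Z\malcev\pv V)}=\pv Z\malcev\Lo V$, and
  \item[(b)] $\pv Z\malcev(\pv V\ast\pv D)\subseteq(\pv Z\malcev\pv V)\ast\pv D$.
\end{enumerate}

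Condition (a) holds without further work: since $\pv K$, $\pv D$, and $\Lo I$ all lie in $\mathbb V$, Proposition~\ref{p:local-operator-comutes} applies verbatim.

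Condition (b) is the substantive input. The typical case is $\pv{Sl}\subseteq\pv V$, in which Theorem~\ref{t:more-K-malc-ast-d} supplies the inclusion at each level:
\begin{equation*}
\pv Z\malcev(\pv V\ast\pv D_k)\subseteq(\pv Z\malcev\pv V)\ast\pv D_k
\qquad(k\ge 1).
\end{equation*}
One then passes to the directed union $\pv D=\bigcup_{k\ge 1}\pv D_k$: any finite $T\in\pv V\ast\pv D$ sits already in $\pv V\ast\pv D_k$ for some $k$, so any generator of $\pv Z\malcev(\pv V\ast\pv D)$ lies in $\pv Z\malcev(\pv V\ast\pv D_k)$ for a suitable $k$, whence in $(\pv Z\malcev\pv V)\ast\pv D_k\subseteq(\pv Z\malcev\pv V)\ast\pv D$. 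With (a) and (b) both in hand, Lemma~\ref{l:conditions-for-the-locality-of-ZmV} concludes that $\pv Z\malcev\pv V$ is local.

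The edge cases that escape the hypothesis $\pv{Sl}\subseteq\pv V$ are the trivial monoidal pseudovariety and the monoidal pseudovarieties of groups. For the trivial $\pv V$, $\pv Z\malcev\pv V=\pv Z$, and one appeals to the known locality of $\pv K$, $\pv D$, and $\Lo I$ themselves. For $\pv V$ a monoidal pseudovariety of groups, the Mal'cev products collapse to joins of the form $\pv K\vee\pv V$, and so on, whose locality can also be read off from established results. The genuine obstacle in this plan is the one already absorbed into Theorem~\ref{t:more-K-malc-ast-d} (whose proof the paper defers), namely establishing the semidirect-product inclusion at each $\pv D_k$ in the range $\pv{Sl}\subseteq\pv V\subseteq\pv{DS}$, where Theorem~\ref{t:more-z-malc-ast-d} is unavailable because $B_2\notin\pv V$; once that is granted, the passage to $\pv D$ and the invocation of the lemma are routine.
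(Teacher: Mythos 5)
Your main line of argument matches the paper's: reduce to Lemma~\ref{l:conditions-for-the-locality-of-ZmV}, get condition (a) from Proposition~\ref{p:local-operator-comutes}, get condition (b) at each level $\pv D_k$ from Theorem~\ref{t:more-K-malc-ast-d}, and pass to the directed union $\pv D=\bigcup_{k\ge 1}\pv D_k$. That part is correct and is exactly what the paper does.

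Where you go astray is in treating ``$\pv{Sl}\nsubseteq\pv V$'' as a genuine possibility requiring separate handling. The paper's proof opens with the observation that the locality hypothesis already forecloses this: a monoidal pseudovariety of semigroups contained in $\pv G$ (i.e.\ a nontrivial group pseudovariety viewed as a semigroup pseudovariety, or the trivial pseudovariety) is \emph{not} local --- the paper explains this in the remarks preceding Lemma~\ref{l:conditions-for-the-locality-of-ZmV}, citing the discussion around Theorem~10.6.14 of Almeida's book. Once $\pv V\nsubseteq\pv G$ is established, monoidality gives $\pv{Sl}\subseteq\pv V$ immediately (any non-group finite monoid contains a submonoid isomorphic to $U_1$). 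So the ``edge cases'' you worry about are vacuous, and identifying that is the one substantive idea your write-up is missing. Moreover, the patches you sketch for those cases are themselves not justified: that $\pv K$ and $\pv D$ are local as semigroup pseudovarieties is not ``known'' in this paper, and the claim that $\pv K\vee\pv H$ is local for a group pseudovariety $\pv H$ ``can be read off from established results'' is an unsupported assertion. Had the edge cases actually occurred, your proof would have a genuine hole there; as it stands, the hole is harmless but reflects a missed step that the paper states explicitly.
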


\begin{proof}
  Since $\pv V$ is local, we do not have $\pv V\subseteq \pv G$.
  Therefore, $\pv V$ contains $\pv {Sl}$, since it is monoidal.
  The theorem now follows immediately from
  Proposition~\ref{p:local-operator-comutes},
  Theorem~\ref{t:more-K-malc-ast-d}
  and Lemma~\ref{l:conditions-for-the-locality-of-ZmV}.
\end{proof}

Next, we translate Theorem~\ref{t:locality-of-kmv} to the context of
monoid pseudovarieties.

\begin{teor}\label{t:locality-of-kmv-monoidal-version}
  Let $\pv V$ be a pseudovariety of monoids.
  If $\pv V$ is local then the pseudovarieties of monoids $\pv K\malcev \pv V$,
  $\pv D\malcev \pv V$ and $\Lo I\malcev \pv V$
  are local.
\end{teor}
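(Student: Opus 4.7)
The plan is to reduce to the already proved semigroup version in Theorem~\ref{t:locality-of-kmv} by passing to the generated semigroup pseudovariety $\pv V_{\pv S}$, splitting the argument according to whether or not $\pv V\subseteq \pv G$.

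First I dispose of the case $\pv V\subseteq \pv G$. For each $\pv Z\in\{\pv K,\pv D,\Lo I\}$, the only monoid belonging to $\pv Z$ (viewed as a semigroup pseudovariety) is the trivial one: the defining pseudoidentity $x_1^\omega=x_1^\omega x_2$ of $\pv K$ forces any identity element $1$ to satisfy $1=1\cdot y=y$ for all $y$, and symmetrically for $\pv D$; for $\Lo I$, the local monoid at $1$ is all of $M$ and must lie in $\pv I$, hence is trivial. Consequently, whenever $\varphi\colon M\to N$ is a monoid homomorphism with $N\in\pv V\subseteq\pv G$, its unique idempotent preimage $\varphi^{-1}(1_N)$ must be trivial, so $\varphi$ is injective and $M\in\pv V$. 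Thus $\pv Z\malcev\pv V=\pv V$, and the conclusion follows from the assumed locality of $\pv V$.

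For the remaining case $\pv V\nsubseteq\pv G$, pick any non-group $M\in\pv V$ and a non-identity idempotent $e$ of $M$; then the submonoid $\{1_M,e\}$ is isomorphic to $U_1$, giving $\pv{Sl}\subseteq\pv V$. By the Almeida--Weil equivalence recalled in the paragraph preceding Lemma~\ref{l:conditions-for-the-locality-of-ZmV}, locality of $\pv V$ is equivalent to locality of $\pv V_{\pv S}$, so $\pv V_{\pv S}$ is a local monoidal pseudovariety of semigroups. Theorem~\ref{t:locality-of-kmv} then yields the locality of $\pv Z\malcev\pv V_{\pv S}$. Proposition~\ref{p:malcev-sgp-vs-monoid} applies, because $\pv V\supseteq\pv{Sl}$, and gives $(\pv Z\malcev\pv V)_{\pv S}=\pv Z\malcev\pv V_{\pv S}$, which is therefore local. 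Finally, $\pv Z\malcev\pv V$ contains $\pv V$ and hence is not contained in $\pv G$, so the same Almeida--Weil equivalence, applied in the reverse direction to the monoid pseudovariety $\pv Z\malcev\pv V$, transfers the locality of $(\pv Z\malcev\pv V)_{\pv S}$ back to $\pv Z\malcev\pv V$.

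The only mildly delicate point is the bookkeeping between the monoid and semigroup interpretations of the Mal'cev product, and this is handled cleanly by Proposition~\ref{p:malcev-sgp-vs-monoid} once $\pv{Sl}\subseteq\pv V$ has been secured; every other step is either a direct invocation of a previous result or a routine verification in the group case.
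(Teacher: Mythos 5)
Your proof is correct and follows essentially the same two-case reduction as the paper: dispose of $\pv V\subseteq\pv G$ by showing $\pv Z\malcev\pv V=\pv V$, then for $\pv V\nsubseteq\pv G$ pass to $\pv V_{\pv S}$, apply Theorem~\ref{t:locality-of-kmv} and Proposition~\ref{p:malcev-sgp-vs-monoid}, and transfer locality back via the Almeida--Weil equivalence (which the paper leaves implicit). One small detail worth making explicit in your group case: from $\varphi^{-1}(1_N)=\{1_M\}$ alone one cannot in general conclude injectivity of a monoid homomorphism; you should first observe that since $N$ is a group, every idempotent of $M$ lies in $\varphi^{-1}(1_N)$, so $M$ has a unique idempotent and is therefore itself a group, after which the trivial-kernel-implies-injective argument applies.
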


\begin{proof}
  Let $\pv Z\in\{\pv K, \pv D,\Lo I\}$.
  If $\pv V$ consists only of groups, then $\pv Z\malcev \pv V=\pv V$.
  If $\pv V$ contains a monoid which is not a group, then
  $\pv V_{\pv S}$ contains $\pv {Sl}$. Since
  $\pv V_{\pv S}$ is local, so is $\pv Z\malcev \pv V_{\pv S}$,
  by Theorem~\ref{t:locality-of-kmv}. But
  $\pv Z\malcev \pv V_{\pv S}=(\pv Z\malcev \pv V)_{\pv S}$,
  by Proposition~\ref{p:malcev-sgp-vs-monoid}.
  Therefore, $\pv Z\malcev \pv V$ is local.
\end{proof}

Theorems~\ref{t:locality-of-ZmV}
and~\ref{t:locality-of-kmv}
unify several known results concerning locality. For example, it is well known that
the pseudovarieties $\pv R$ of
$\R$-trivial semigroups, and $\pv {DA}$ of semigroups whose regular
$\D$-classes are aperiodic semigroups, are
local~\cite{Stiffler:1973,Almeida:1996c}.
The equalities $\pv R=\pv K\malcev \pv {Sl}$
and $\pv {DA}=\Lo I\malcev \pv {Sl}$ are
also well known, and were used in Almeida's proof of the locality of
$\pv R$ and $\pv {DA}$~\cite{Almeida:1996c}.
Their locality may be seen as an
application of Theorem~\ref{t:locality-of-kmv},
since $\pv {Sl}$ is local~\cite{Brzozowski&Simon:1973,Eilenberg:1976}.

More generally, let $(\pv R_m)_{m\ge 1}$ and $(\pv L_m)_{m\ge 1}$ be
the families of pseudovarieties recursively defined by
$\pv R_1=\pv L_1=\pv {Sl}$ and
  $\pv R_{m+1}=\pv K \malcev \pv L_{m}$,
  $\pv L_{m+1}=\pv D \malcev \pv R_{m}$ for $m\ge 1$.
  These two families,  each one with union equal to $\pv {DA}$, have received some
 attention~\cite{Trotter&Weil:1997,Kufleitner&Weil:2009,Kufleitner&Weil:2010,Kufleitner&Weil:2012}\footnote{In~\cite{Kufleitner&Weil:2009,Kufleitner&Weil:2010,Kufleitner&Weil:2012}.
   one has $\pv R_1=\pv L_1=\pv J$, where $\pv J$  is the
   pseudovariety of $\J$-trivial semigroups. But the
   remaining pseudovarieties are the same, since
   $\pv K\malcev \pv {Sl}=\pv K\malcev \pv J$ and
   $\pv D\malcev \pv {Sl}=\pv D\malcev \pv J$. We prefer to define $\pv R_1=\pv L_1=\pv {Sl}$ because $\pv J$ is not local~\cite{Knast:1983b}.}.
 As $\pv {Sl}$ is local, Theorem~\ref{t:locality-of-kmv} provides the first proof, as far as we know,
 that all pseudovarieties in this pair of families 
 are local. 

 As an example of an application
 of Theorem~\ref{t:locality-of-ZmV},
 we give an alternative proof
 of a result of Steinberg,
which states in particular that ${\pv G}\ast {\pv A}$ is local.
If $\pv H$ is a pseudovariety of groups, then
$\bar{\pv H}$ denotes the pseudovariety of semigroups whose maximal subgroups
belong to $\pv H$.
One has $\bar{\pv H}=\Lo {\bar{\pv H}}$
(cf.~\cite[Chapter V, Proposition 10.6]{Eilenberg:1976}),
and so $\bar{\pv H}$ is local,
because
$\bar{\pv H}$ is monoidal
and every pseudovariety of
the form $\Lo V$
is a solution of the equation
$\Lo X= \pv X\ast\pv D$.

\begin{teor}[{\cite{Steinberg:2004a}}]\label{t:application-to-GA}
  If $\pv H$ is a pseudovariety of groups
  closed under semidirect product, then
  $\pv G\ast\bar{\pv H}$ is local.
\end{teor}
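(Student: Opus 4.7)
My plan is to apply Theorem~\ref{t:locality-of-ZmV} with $\pv Z=\Lo G$ (which belongs to $\mathbb V$) and $\pv V=\bar{\pv H}$, after establishing the identity
\begin{equation*}
\pv G\ast\bar{\pv H}=\Lo G\malcev\bar{\pv H}.
\end{equation*}
Granting this identity, the conclusion is immediate: $\bar{\pv H}$ is monoidal and local (as recalled just before the statement of the theorem), and it contains $B_2$ because $B_2$ has only trivial maximal subgroups, which lie in any pseudovariety of groups. Theorem~\ref{t:locality-of-ZmV} then yields the locality of $\Lo G\malcev\bar{\pv H}$, and hence of $\pv G\ast\bar{\pv H}$.

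The inclusion $\pv G\ast\bar{\pv H}\subseteq\Lo G\malcev\bar{\pv H}$ is the easy direction and does not use the hypothesis on $\pv H$. For any generator $G\rtimes T$ of $\pv G\ast\bar{\pv H}$, with $G\in\pv G$ and $T\in\bar{\pv H}$, the canonical projection onto $T$ has fibers over idempotents of $T$ isomorphic to the group $G$, which lies in $\pv G\subseteq\Lo G$; hence $G\rtimes T$ is a generator of $\Lo G\malcev\bar{\pv H}$.

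The converse inclusion $\Lo G\malcev\bar{\pv H}\subseteq\pv G\ast\bar{\pv H}$ is the main obstacle, and is precisely where the hypothesis that $\pv H$ is closed under semidirect products enters. My approach here would be via Tilson's derived category theorem: given a homomorphism $\varphi\colon S\to T\in\bar{\pv H}$ with $\varphi^{-1}(e)\in\Lo G$ for every idempotent $e\in T$, the local-group condition on the fibers forces the loop monoids of the derived semigroupoid of $\varphi$ to be groups, so that $S$ divides a semidirect product of the form $G'\rtimes T'$ with $G'\in\pv G$; closure of $\pv H$ under semidirect products is what guarantees that the target $T'$ produced by the derived category theorem remains in $\bar{\pv H}$, since the maximal subgroups of the auxiliary semigroups built from $T$ along the construction are extensions of subgroups of $T$ by groups lying in $\pv H$. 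Once this identification is secured, the proof is complete.
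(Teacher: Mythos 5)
Your high-level plan — writing $\pv G\ast\bar{\pv H}$ as $\pv Z\malcev\bar{\pv H}$ for some $\pv Z\in\mathbb V$ and then invoking Theorem~\ref{t:locality-of-ZmV} — is precisely the paper's strategy, but the identity you try to prove, $\pv G\ast\bar{\pv H}=\Lo G\malcev\bar{\pv H}$, is not the one that holds, and in fact it is false in general. The paper instead cites Theorem~4.11.4 of \cite{Rhodes&Steinberg:2009}, which (using that $\bar{\pv H}$ is closed under semidirect product and under adjoining identities) gives $\pv G\ast\bar{\pv H}=(\pv K\vee\pv G)\malcev\bar{\pv H}$; since $\pv K\vee\pv G\in\mathbb V$ as well, the remainder of your argument then goes through verbatim with $\pv K\vee\pv G$ in place of $\Lo G$. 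Note also that the fibres of the projection $G\rtimes T\to T$ over an idempotent $e$ are not copies of $G$: with the twisted multiplication $(g,e)(h,e)=(g\cdot{}^{e}h,e)$ they are \emph{left groups}, divisors of $L\times G'$ with $G'=\Im{({}^{e})}$. This is exactly why the sharp formula has $\pv K\vee\pv G$ on the left of the Mal'cev product and not $\pv G$ or $\Lo G$; and it also shows that your derived-semigroupoid heuristic (``local-group fibres force group loop monoids'') is off, since already for the trivial target the derived semigroupoid is $S$ itself.

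Concretely, the inclusion $\Lo G\malcev\bar{\pv H}\subseteq\pv G\ast\bar{\pv H}$ fails for $\pv H=\pv I$. Let $S$ be the six-element ideal extension of the rectangular band $L_2\times R_2$ by $\ZZ_2=\{1,a\}$ in which $a$ acts on $L_2\times R_2$ by swapping both coordinates simultaneously. Collapsing $\ZZ_2$ and $L_2\times R_2$ to the two elements of a semilattice gives a homomorphism onto an aperiodic semigroup whose idempotent fibres, $\ZZ_2$ and $L_2\times R_2$, are both local groups; hence $S\in\Lo G\malcev\pv{Sl}\subseteq\Lo G\malcev\pv A$. On the other hand, the faithful right action of $S$ on the two $\L$-classes of its minimal ideal contains the swap induced by $a$, so the right letter mapping image $\mu_{\pv K\vee\pv G}^{S}(S)$ has a $\ZZ_2$ subgroup and is not aperiodic; by Theorem~4.6.50 of \cite{Rhodes&Steinberg:2009} this means $S\notin(\pv K\vee\pv G)\malcev\pv A=\pv G\ast\pv A$. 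So the key identity your proof rests on does not hold, which is a genuine gap.
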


\begin{proof}
  The pseudovariety $\bar {\pv H}$ is also closed under semidirect
  product~\cite[Corollary 4.1.47]{Rhodes&Steinberg:2009}.
  Moreover, it is closed under adjoining identities. Therefore,
  we are in the conditions of Theorem 4.11.4
  from~\cite{Rhodes&Steinberg:2009}, and so
  $\pv G\ast\bar{\pv H}=(\pv {K}\vee\pv {G})\malcev \bar{\pv H}$.
  As $\bar{\pv H}$ is local
   and contains $B_2$,
  and $\pv {K}\vee\pv {G}\in\mathbb V$, the result follows from
  Theorem~\ref{t:locality-of-ZmV}.
\end{proof}

Note that,
for every pseudovariety $\pv H$ of groups,
Proposition~\ref{p:local-operator-comutes} can be used to
obtain
$(\pv {K}\vee\pv {G})\malcev \bar{\pv H}=
\Lo [(\pv {K}\vee\pv {G})\malcev \bar{\pv H}]$,
and so in the proof of Theorem~\ref{t:application-to-GA},
one can apply Proposition~\ref{p:local-operator-comutes}
instead of Theorem~\ref{t:locality-of-ZmV}.

  The operator $\pv N\malcev(\_)$ does not preserve locality of
  subpseudovarieties of~$\pv {DS}$.
  Indeed, the pseudovariety 
  $\pv {J}$ of $\J$-trivial semigroups
  is equal to $\pv {N}\malcev \pv {Sl}$~\cite{Pin:1986;bk}, and it is not
  local~\cite{Knast:1983b}, while
  $\pv {Sl}$ is local.
 
  We do not know whether in
  the cases $\pv Z\in \{\pv K\vee \pv G,\pv D\vee \pv G,
  \pv N\vee \pv G\}$ 
  the operator $\pv Z\malcev(\_)$ preserves locality when restricted
  to the interval $[\pv {Sl},\pv {DS}]$.
    Since $\pv {DG}=(\pv N\vee \pv G)\malcev\pv{Sl}$ (cf.~proof of
  Proposition 5.12 in~\cite{Pin&Weil:1996a}),
  research on this subject could lead to a new proof that $\pv {DG}$
  (the pseudovariety of semigroups whose regular
  $\D$-classes are groups) is local, a difficult result  proved by
  Ka\v dourek~\cite{Kadourek:2008}.

\subsection{An application to varieties of formal
  languages}

 Eilenberg's theorem relating pseudovarieties of
 semigroups (or monoids) with
 varieties of formal languages gave a strong motivating framework to
 study the former. The following theorem summarizes a series of
 results which are good examples of the adequacy of that framework.

 \begin{teor}[\cite{Pin:1980b,Pin:1986;bk,Pin&Straubing&Therien:1988,Straubing:1979a},
   see also survey~\cite{Pin:1997}\footnote{In \cite{Pin:1997} one
     only finds the  characterization
     of $\ast$-varieties of languages closed under concatenation
     product.
     Nevertheless, with the appropriated translation,
   that characterization holds for $+$-varieties of languages,
   thanks to~\cite[Theorem 4.1]{Chaubard&Pin&Straubing:2006},
   a more general result.}]\label{t:characterizations-of-languages}
 Given a $+$-variety $\mathcal V$ of languages,
 let  $\pv V$ be the pseudovariety of semigroups
 generated by the syntactic semigroups of elements of $\mathcal V$.
 Then, we have:
 \begin{enumerate}
 \item $\mathcal V$ is closed under concatenation product
   if and only if $\pv V=\pv A\malcev\pv V$;
 \item $\mathcal V$ is closed under unambiguous product if and only if
    $\pv V=\Lo I\malcev\pv V$;
 \item $\mathcal V$ is closed under left deterministic product
   if and only if $\pv V=\pv K\malcev\pv V$,
 and dually, $\mathcal V$ is closed under right deterministic product
 if and only if $\pv V=\pv D\malcev\pv V$.
 \end{enumerate}   
 \end{teor}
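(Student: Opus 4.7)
My plan is to handle the four items uniformly via Eilenberg's correspondence (in the $+$-variety form of~\cite{Chaubard&Pin&Straubing:2006}), which yields a bijection between $+$-varieties $\mathcal V$ of languages and pseudovarieties $\pv V$ of semigroups such that $L\in\mathcal V$ precisely when the syntactic semigroup of $L$ lies in $\pv V$. Under this translation, each closure property becomes a stability property of $\pv V$ under the appropriate Mal'cev product $\pv Z\malcev(\_)$, with $\pv Z\in\{\pv A,\Lo I,\pv K,\pv D\}$, and the proof in each case splits into two implications.

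For the direction that constructs the Mal'cev product structure (closure implies stability), I would start from a language $L$ whose syntactic semigroup $S$ lies in $\pv Z\malcev\pv V$, so that there is a homomorphism $\varphi\colon S\to T$ with $T\in\pv V$ and every idempotent-fibre in $\pv Z$. Using Straubing's wreath product principle, I would exhibit $L$ as a Boolean combination of marked products of languages already in $\mathcal V$, of the shape dictated by $\pv Z$: unrestricted marked products when $\pv Z=\pv A$, unambiguous ones when $\pv Z=\Lo I$, left (respectively right) deterministic ones when $\pv Z=\pv K$ (respectively $\pv D$). The pseudoidentity characterizations of $\pv K$, $\pv D$ and $\Lo I$ collected in Section~\ref{sec:perm-pseud} are precisely what forces the marking letters to determine the factorization in the stated way.

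For the converse, I would take a product of languages from $\mathcal V$ of the specified kind and build a recognizer by a Sch\"utzenberger-style construction, decorating the parallel product of the factor recognizers with additional data tracking the position within the product. Projecting out the decoration gives a homomorphism onto a semigroup in $\pv V$, and the key verification is that the fibres over idempotents act in the manner dictated by the product: completely arbitrary in the concatenation case (hence aperiodic), uniquely factorizable in the unambiguous case (hence in $\Lo I$), determined by a left (respectively right) prefix in the deterministic cases (hence in $\pv K$, respectively $\pv D$). Reading off the corresponding pseudoidentity then places the syntactic semigroup in $\pv Z\malcev\pv V$.

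The main obstacle is not the formal Eilenberg translation but the combinatorial core of each individual case: in the aperiodic case this is Sch\"utzenberger's theorem, and in the other three cases it is the analogous decomposition theorem for languages recognized by $\pv Z\malcev\pv V$ with $\pv Z\in\{\Lo I,\pv K,\pv D\}$, where factorization uniqueness and prefix/suffix determination must be matched carefully to the defining pseudoidentities of $\Lo I$, $\pv K$ and $\pv D$. A secondary but nontrivial point is the passage from $\ast$-varieties (where the original references work) to $+$-varieties, controlled by~\cite[Theorem 4.1]{Chaubard&Pin&Straubing:2006}.
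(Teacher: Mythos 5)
The paper does not prove this theorem; it quotes it from the literature, so there is no internal proof to compare against. Read as a reconstruction of the cited references, your high-level plan is sound: the $+$-variety form of Eilenberg's correspondence, two implications per item, and the recognition that the real mathematical content is Sch\"utzenberger's theorem and its unambiguous and deterministic analogues, which you list but do not carry out.

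Two imprecisions are nonetheless worth flagging. In the ``closure implies stability'' direction you invoke the wreath product principle for a language recognized by a member of $\pv Z\malcev\pv V$; but the wreath product principle, as usually formulated, characterizes languages recognized by semidirect products $\pv Z\ast\pv V$, a related but distinct operation from the Mal'cev product, and the decomposition tool the cited references actually use for these four products is the Sch\"utzenberger product together with its diagonal/unambiguous refinements, which you invoke only in the converse direction. Second, for a syntactic semigroup $S$ in $\pv Z\malcev\pv V$ there need not exist a \emph{homomorphism} $\varphi\colon S\to T$ with $T\in\pv V$ and idempotent fibres in $\pv Z$: that description applies to the generating members of the Mal'cev product, whereas an arbitrary member only admits a relational morphism with the corresponding fibre condition (or divides a finite product of generators). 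Your argument as written treats the generators only; to reach the general case one must either pass to relational morphisms or explicitly use that $\pv V$ is closed under division and finite direct products.
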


 We denote by
 $(\_)\ast \mathcal D_k$ the operator
 in the lattice of $+$-varieties of languages
 corresponding to the operator
 $(\_)\ast \pv D_k$, and by~$\mathcal Sl$ the $+$-variety of languages
 recognized by semigroups of $\pv {Sl}$.

\begin{teor}\label{t:varieties-languages}
  Let $\mathcal V$ be a variety of $+$-languages.
  \begin{enumerate}
  \item If $\mathcal V$ is closed under concatenation product, then
    so is $\mathcal V\ast\mathcal D_k$.
    \item If $\mathcal V$  contains $\mathcal Sl$ and is
      closed under unambiguous product
      (respectively, left deterministic product, right deterministic
      product), then $\mathcal V\ast\mathcal D_k$
      is also
      closed under unambiguous product
      (respectively, left deterministic product, right deterministic
      product).
  \end{enumerate}
\end{teor}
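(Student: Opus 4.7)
The plan is to translate the statement through Eilenberg's correspondence to a statement about pseudovarieties of semigroups and then invoke the results already established in this section. Let $\pv V$ be the pseudovariety of semigroups generated by the syntactic semigroups of the languages in $\mathcal V$; by definition of the operator $(\_)\ast\mathcal D_k$ on $+$-varieties of languages, the pseudovariety associated with $\mathcal V\ast\mathcal D_k$ is precisely $\pv V\ast\pv D_k$. Hence, by Theorem~\ref{t:characterizations-of-languages}, in each case it suffices to show an equality of the form $\pv Z\malcev(\pv V\ast\pv D_k)=\pv V\ast\pv D_k$, where $\pv Z$ is the pseudovariety corresponding to the operation on languages under consideration.

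For item~(1), I would take $\pv Z=\pv A$. The hypothesis that $\mathcal V$ is closed under concatenation product gives, by Theorem~\ref{t:characterizations-of-languages}(1), that $\pv V=\pv A\malcev\pv V$. In particular $\pv A\subseteq\pv V$, so $B_2\in\pv V$. Since $\pv A$ is permanent (as shown in Section~\ref{sec:perm-pseud}), Theorem~\ref{t:z-malc-ast-d} applies and yields $\pv A\malcev(\pv V\ast\pv D_k)=\pv V\ast\pv D_k$ for every $k\geq 1$. Translating back through Theorem~\ref{t:characterizations-of-languages}(1) gives the closure of $\mathcal V\ast\mathcal D_k$ under concatenation product.

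For item~(2), the hypothesis $\mathcal Sl\subseteq\mathcal V$ is equivalent, via the Eilenberg correspondence, to $\pv{Sl}\subseteq\pv V$. For each of the three closure properties, Theorem~\ref{t:characterizations-of-languages}(2)--(3) gives an equality of the form $\pv V=\pv Z\malcev\pv V$ with $\pv Z\in\{\Lo I,\pv K,\pv D\}$. We are therefore exactly in the hypotheses of Corollary~\ref{c:improves-stability}, which yields $\pv Z\malcev(\pv V\ast\pv D_k)=\pv V\ast\pv D_k$. Applying Theorem~\ref{t:characterizations-of-languages} once more, in the reverse direction, delivers the required closure property for $\mathcal V\ast\mathcal D_k$.

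Since all the substantive work is packaged in Theorems~\ref{t:characterizations-of-languages}, \ref{t:z-malc-ast-d} and Corollary~\ref{c:improves-stability}, there is no genuine obstacle here; the only thing to check carefully is the bookkeeping of hypotheses (in particular, that $\pv A\subseteq\pv V$ in~(1) and $\pv{Sl}\subseteq\pv V$ in~(2)), so that the respective results apply.
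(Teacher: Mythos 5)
Your proposal is correct and follows essentially the same route the paper takes: translate the language-theoretic closure property into an equality $\pv Z\malcev\pv V=\pv V$ via Theorem~\ref{t:characterizations-of-languages}, then invoke Theorem~\ref{t:z-malc-ast-d} for $\pv Z=\pv A$ (using $B_2\in\pv A\subseteq\pv V$) and Corollary~\ref{c:improves-stability} for $\pv Z\in\{\pv K,\pv D,\Lo I\}$ (using $\pv{Sl}\subseteq\pv V$), and translate back. Your bookkeeping of the hypotheses ($B_2\in\pv V$ in item~(1) and $\pv{Sl}\subseteq\pv V$ in item~(2)) is exactly what the paper's terse proof leaves implicit.
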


\begin{proof}
  Thanks to Theorem~\ref{t:characterizations-of-languages}, this is an
  immediate application of Theorem~\ref{t:z-malc-ast-d} (recall
  that $\pv A$ is permanent and $B_2\in \pv A$) and
  Corollary~\ref{c:improves-stability}.
\end{proof}

\section{Proof of Theorem~\ref{t:z-malc-ast-d}}\label{sec:proof-theorem-reft:z}

  We assume familiarity with the fundamentals of profinite semigroupoids,
  namely the equational theory of semigroupoid pseudovarieties.
  See~\cite{Jones:1996,Almeida&Weil:1996,Rhodes&Steinberg:2009}\footnote{For
    delicate questions related with infinite-vertex
  semigroupoids, see also~\cite{Almeida&Costa:2009}.
  However, all semigroupoids in this paper are finite-vertex.}.
  Recall in particular that for a pseudovariety of semigroups $\pv V$,
  the pseudovariety of semigroupoids generated by $\pv V$,
  the \emph{global of $\pv V$}, is denoted by $g\pv V$.
  The pseudovariety of semigroupoids whose local semigroups belong to
  $\pv V$, the \emph{local of $\pv V$}, is denoted by~$\ell\pv V$.
  Hence $\pv V$ is local if and only if $g\pv V=\ell\pv V$.
  The free profinite semigroupoid,
  over the pseudovariety $\pv {Sd}$ of
  all finite semigroupoids,
  and generated by a finite graph $A$,
  is denoted by $\Om A{Sd}$.

  \begin{defi}
    \label{def:global-Z-based}
        {\rm Let $\pv Z$ be a pseudovariety of semigroups with a
    basis $\Sigma$ of left-permanent pseudoidentities.
    Let $\pv W$ be a pseudovariety of semigroupoids such that
    $\pv W$ has a basis
    \begin{equation*}
      \Gamma=\bigcup_{(u=v)\in\Sigma}\Gamma_{(u=v)}
    \end{equation*}
    in which an element $\mathfrak p$ of $\Gamma_{(u=v)}$, with
    $(u=v)\in\Sigma$, is
    a path pseudoidentity of the form
    $(u_L(r_1,r_2)=v_L(r_1,r_2);A_{\mathfrak p})$, where $A_{\mathfrak p}$ is a
    finite graph, $L$ is the local semigroup
    at a vertex $c$ of
    $\Om {A_{\mathfrak p}}{Sd}$, and $r_1$ and $r_2$ are
    elements of $L$.
    We say that $\pv W$ is~\emph{left $\pv Z$-based}.
    }
  \end{defi}

  In~\cite{Almeida&Azevedo&Teixeira:1998},
  given an alphabet $X$,
  to each pseudoword $w\in\Om XS$
  it is associated a finite graph denoted by $A_w$.
  In this paper we shall not recall
  the somewhat technical definition of $A_w$, but we
  will next highlight some of its features which will be used in the
  proof of
  Proposition~\ref{p:-when-global-is-Z-based-left-version}.
  The precise definitions and
  results can be found in~\cite[Section 5]{Almeida&Azevedo&Teixeira:1998}.
  
  The edges of $A_w$ are elements of $X$,  
  and every element of $X^+$ that labels
  a path $p$ in $A_w$ can be interpreted
  as being the path $p$.
  The pseudoword $w$ can also be interpreted uniquely as
  a profinite path of $A_w$,
  as follows from the next lemma.
  \begin{lema}[{\cite[Lemma 5.7]{Almeida&Azevedo&Teixeira:1998}}]\label{l:interpretation-of-w}
    If $(w_n)_{n\ge 1}$ is a sequence of elements of $X^+$
    converging to $w$, then
    there is $N\ge 1$ such that
    $(w_n)_{n\ge N}$ is a sequence of elements of $\Om {A_w}{Sd}$
    converging to a profinite path $L$ of $A_w$. The profinite path
    $L$ depends only on $w$,
    not on the choice of the sequence  $(w_n)_{n\ge 1}$.    
  \end{lema}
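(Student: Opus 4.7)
The plan is to work directly with the inductive construction of $A_w$ from iterated left basic factorizations of $w$ sketched in~\cite[Section 5]{Almeida&Azevedo&Teixeira:1998}. That construction equips $A_w$ with distinguished source and target vertices $\alpha_w,\beta_w$ determined by $w$, and with edges labelled in $X$, in such a way that $w$ itself is meant to trace a profinite path from $\alpha_w$ to $\beta_w$. The key \emph{proximity principle} I would establish is this: there is a continuous homomorphism $\eta\colon \Om XS\to F$ onto some finite semigroup $F$ such that, whenever $x\in X^+$ satisfies $\eta(x)=\eta(w)$, the word $x$ reads a unique path from $\alpha_w$ to $\beta_w$ in $A_w$. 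This should be extracted from the fact that $A_w$ is finite and that the branching data at each of its vertices is determined by finitely many pseudoidentities satisfied by $w$.

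Assuming the proximity principle, the first step is to pick $N$ so that $\eta(w_n)=\eta(w)$ for all $n\ge N$, which gives that $(w_n)_{n\ge N}$ is a sequence in $\Om{A_w}{Sd}(\alpha_w,\beta_w)$. To prove convergence, I would invoke the standard criterion that a sequence in a profinite semigroupoid converges if and only if its image is eventually constant under every continuous homomorphism into a finite semigroupoid. Given such a homomorphism $\pi$, one constructs a continuous semigroup homomorphism $\tilde\pi\colon\Om XS\to E$ into a suitable finite semigroup from which $\pi(x)$ can be recovered for every $x\in X^+$ close enough to $w$ (in the sense of the proximity principle); convergence of $(w_n)$ in $\Om XS$ then forces $(\tilde\pi(w_n))$, and hence $(\pi(w_n))$, to be eventually constant.

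For the independence of the limit from the chosen sequence, I would use a standard interleaving trick: given two sequences $(w_n)$, $(w_n')$ in $X^+$ converging to $w$, the interleaved sequence $w_1,w_1',w_2,w_2',\ldots$ also converges to $w$ in $\Om XS$, hence by the previous step converges to a profinite path in $A_w$; since $\Om{A_w}{Sd}$ is Hausdorff, both subsequences share this limit.

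The main obstacle will be establishing the proximity principle, since it requires quantifying the recursive construction of $A_w$ into an explicit finite quotient of $\Om XS$: one must verify that the vertices and edges of $A_w$ visited by a word $x$ near $w$ coincide with those visited by $w$, and that this coincidence is already detected by a finite amount of syntactic information about $x$ and $w$. Once this is in hand, the remaining steps are routine continuity and Hausdorffness arguments.
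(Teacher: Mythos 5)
The paper does not prove this lemma: it is cited verbatim from \cite[Lemma~5.7]{Almeida&Azevedo&Teixeira:1998}, and the present authors explicitly defer to that reference (``we shall not recall the somewhat technical definition of $A_w$''), so there is no in-paper proof to compare your proposal against. Taking your sketch on its own merits, the surrounding machinery is sound: once you have a finite quotient $\eta$ witnessing that all sufficiently late $w_n$ read coterminal paths in $A_w$, the convergence argument via the Cauchy criterion (eventually constant images under continuous maps to finite semigroupoids, lifted through the finite ``transition'' semigroup of partial maps $V(A_w)\to T$) is correct, and the interleaving trick for uniqueness of the limit is exactly the standard argument in a Hausdorff profinite space.

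The genuine gap is the one you yourself flag: the ``proximity principle'' is not an a priori consequence of $A_w$ being finite. Concretely, the set $P$ of words in $X^+$ reading a path from $\alpha_w$ to $\beta_w$ in $A_w$ is a rational language recognized by some finite $\eta\colon \Om XS\to F$, and what you must prove is precisely that $w$ lies in the clopen set $\overline{P}=\eta^{-1}(\eta(P))$; a finite labelled graph with a partial transition structure does not automatically have the pseudoword that spawned it in the closure of its accepted language. Establishing this requires unwinding the recursive definition of $A_w$ from the iterated left basic factorizations of $w$ and showing, level by level, that the factorization data of any $x\in X^+$ agreeing with $w$ modulo a suitable finite congruence produces the same vertex set and the same gluing, hence a legal path. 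That verification is the entire content of the lemma --- the rest is routine --- so until it is carried out the argument is a plan rather than a proof.
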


  The limit $L$ in Lemma~\ref{l:interpretation-of-w}
  is the interpretation of $w$ in $\Om {A_w}{Sd}$.
  
  It follows also from Lemma~\ref{l:interpretation-of-w}
  that every factor $u$ of $w$
  can be interpreted as a profinite path of $A_w$,
  in the manner which we next explain.
  Let $x,y\in(\Om XS)^1$ be such that $w=xuy$.
  Consider a sequence $(x_n,u_n,y_n)_n$ of elements of
  $X^\ast\times X^+\times X^\ast$ converging to
  $(x,u,y)$. Then, for large enough~$n$,
  the word $x_nu_ny_n$ can be interpreted as an element of 
  $\Om {A_w}{Sd}$ by Lemma~\ref{l:interpretation-of-w}.
  In particular, $u_n$ can be interpreted as an
  element of $\Om {A_w}{Sd}$. Therefore, in this manner, an accumulation point
  of $(u_n)_n$ in $\Om {A_w}{Sd}$ can be seen as an interpretation of
  $u$ in $\Om {A_w}{Sd}$. From a careful reading of the
  paragraph
  preceding~\cite[Lemma 5.7]{Almeida&Azevedo&Teixeira:1998},
  where Lemma~\ref{l:interpretation-of-w} is proved,
  one concludes that the interpretation of $u$ is unique, but we shall
  not need to use this more precise information.

  The following property will be used without reference.
  
  \begin{prop}[cf.~{\cite[Proposition 5.6]{Almeida&Azevedo&Teixeira:1998}}]
    For an alphabet $X$, let $u,v\in\Om XS$.
    If $B_2\models u=v$ then $A_u=A_v$.
  \end{prop}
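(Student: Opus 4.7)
The plan is to mirror the approach of~\cite[Proposition 5.6]{Almeida&Azevedo&Teixeira:1998}, reducing to the case of finite words $u,v\in X^+$ by a continuity argument, and then invoking the combinatorial structure of $A_w$ to verify the claim.

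First, I would reduce to the case $u,v\in X^+$. Choose sequences $(u_n)_{n\geq1},(v_n)_{n\geq1}$ in $X^+$ converging to $u,v$ respectively. Since $B_2$ is finite and the natural continuous projection of $\Om XS$ onto the relatively free $B_2$-generated semigroup has finite image, we may assume, after passing to a tail of each sequence, that $B_2\models u_n=v_n$ for all $n$. The hypothesis $B_2\models u=v$ is used precisely here: the images of $u$ and $v$ in this finite free object coincide, hence the images of $u_n$ and $v_n$ coincide for large $n$.

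Second, I would establish that the assignment $w\mapsto A_w$ stabilizes along a convergent sequence. The graph $A_w$ has edges drawn from $X$ and a vertex set determined by bounded-length factor data of $w$ (as is implicit in the discussion preceding Lemma~\ref{l:interpretation-of-w}, where the profinite interpretation of $w$ in $\Om{A_w}{Sd}$ and the interpretation of its factors depend only on finite approximations). From this it follows that for $n$ sufficiently large, $A_{u_n}=A_u$ and $A_{v_n}=A_v$. So it suffices to verify that $A_{u_n}=A_{v_n}$ for finite words.

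Third, for the finite-word case, I would examine the concrete construction of $A_w$ from~\cite{Almeida&Azevedo&Teixeira:1998} and argue that each piece of data used to define $A_w$ (letters occurring in $w$, together with the local adjacency and endpoint information attached to them) is an invariant of the $B_2$-class of $w$. Concretely, $B_2$ distinguishes homomorphisms into the two-state partial identity-transition monoid, and so it detects the set of letters, the first and last letters, and the set of length-two factors of $w$; this is exactly the local information that feeds the definition of the vertices and the incidence function of $A_w$.

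The main obstacle is the second step: making the continuity of $w\mapsto A_w$ sufficiently precise to justify passage from approximants back to $u$ and $v$. This amounts to a careful bookkeeping of which bounded-length factors of $w$ determine which vertices and edges of $A_w$, and how these factors are read off approximating words via Lemma~\ref{l:interpretation-of-w}. Once this is in place, the proposition follows by combining the two reductions with the invariance observation of the third step.
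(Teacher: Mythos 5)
The paper itself offers no proof of this proposition: it is cited directly from~\cite[Proposition 5.6]{Almeida&Azevedo&Teixeira:1998}, and the surrounding text explicitly declines to reproduce the ``somewhat technical'' definition of the graph $A_w$. So there is no in-paper argument to compare against, and any assessment has to be of your proposal on its own terms.

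Your step~2 has a concrete gap. You appeal to Lemma~\ref{l:interpretation-of-w} to conclude that $A_{u_n}=A_u$ and $A_{v_n}=A_v$ for large $n$, but that lemma asserts something genuinely different: it says the approximating words $u_n$ eventually label paths \emph{in the fixed graph} $A_u$, i.e.\ that $u_n\in\Om{A_u}{Sd}$ for $n\ge N$. It says nothing about the graph $A_{u_n}$ built from the finite word $u_n$, which is a separate object and need not a priori coincide with $A_u$. The claim that $w\mapsto A_w$ is locally constant is plausible, but it is exactly the kind of fact that would need to be extracted from the explicit construction of $A_w$ in~\cite{Almeida&Azevedo&Teixeira:1998}; you have not supplied that argument, and you cannot get it out of Lemma~\ref{l:interpretation-of-w} alone.

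Beyond that gap, the architecture is both incomplete and unnecessarily indirect. Step~3 is only announced (``I would examine the concrete construction \dots''), not carried out, yet it is precisely where the substance of the proposition lives: one must open up the definition of $A_w$ and check that each ingredient of that definition is an invariant of the $B_2$-class of $w$. And if one actually does this, the reduction to finite words in steps~1--2 is dispensable: a statement of the form ``$A_w$ is determined by such-and-such finitary data that is a $B_2$-invariant'' applies directly to arbitrary pseudowords $u,v$, so $B_2\models u=v$ gives $A_u=A_v$ immediately. The detour through finite words is exactly where the problematic step~2 enters, and it buys nothing once the invariance in step~3 is established in the form that is actually needed.
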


  The next result explains the importance of the graphs of the
  form~$A_w$.

  \begin{teor}[{\cite[Theorem 5.9]{Almeida&Azevedo&Teixeira:1998}}]\label{t:basis-V-to-gV}
    Let $\pv V$ be a pseudovariety of semigroups containing $B_2$.
    If $\pv V=\op u_i=v_i\mid i\in I\cl $
    then $g\pv V=\op (u_i=v_i;A_{u_i})\mid i\in I\cl$.
  \end{teor}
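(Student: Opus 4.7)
The plan is to establish both inclusions of $g\pv V = \op (u_i=v_i; A_{u_i}) \mid i\in I\cl$ separately, with the substantive content in the reverse direction. Observe first that the statement is well-posed: since $B_2 \in \pv V$ we have $B_2 \models u_i = v_i$ for each $i$, whence by the proposition just cited $A_{u_i} = A_{v_i}$, so that $u_i$ and $v_i$ interpret as coterminal profinite paths in $\Om {A_{u_i}}{Sd}$ and the path pseudoidentity $(u_i = v_i; A_{u_i})$ has meaning.

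For $g\pv V \subseteq \op (u_i=v_i; A_{u_i}) \mid i\in I\cl$, it suffices to check that each $S \in \pv V$, viewed as a one-vertex semigroupoid, satisfies every $(u_i=v_i; A_{u_i})$, since path pseudoidentities persist under divisors, finite products, and projective limits, the constructions generating $g\pv V$ from $\pv V$. A semigroupoid morphism from $\Om {A_{u_i}}{Sd}$ to the one-vertex semigroupoid $S$ is determined by a function on the edges of $A_{u_i}$, which are elements of the alphabet $X$; this function extends to a unique continuous homomorphism $\Om XS \to S$ that sends the profinite-path interpretations of $u_i$ and $v_i$ to their evaluations in $S$, which coincide because $S \models u_i = v_i$.

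For the reverse inclusion, let $C$ be a finite semigroupoid satisfying every $(u_i=v_i; A_{u_i})$, and suppose I want to place $C$ in $g\pv V$. The plan is to show, invoking the semigroupoid analogue of Reiterman's theorem, that $C$ satisfies every path pseudoidentity $(u = v; \Gamma)$ in which $\Gamma$ is a finite graph, $u, v$ are coterminal paths in $\Om \Gamma{Sd}$, and $\pv V \models u = v$, for such pseudoidentities axiomatize $g\pv V$. A Reiterman-type compactness argument in $\Om XS$ reduces the task to the case where $u = v$ is a single basis pseudoidentity $u_i = v_i$. I would then invoke the universal property of $A_{u_i}$: every interpretation of $u_i$ in $\Om \Gamma{Sd}$ factors through a graph morphism $\varphi\colon A_{u_i} \to \Gamma$, which (because $A_{u_i} = A_{v_i}$) simultaneously factors the corresponding interpretation of $v_i$; the assumed satisfaction of $(u_i = v_i; A_{u_i})$ in $C$ then transports, by precomposition with the given morphism $\Om \Gamma{Sd} \to C$ and the morphism induced by $\varphi$, to the satisfaction of $(u_i = v_i; \Gamma)$ in $C$.

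The main obstacle is justifying this universal factorization property of $A_{u_i}$: every interpretation of $u_i$ in an arbitrary $\Om \Gamma{Sd}$ factors canonically through a graph morphism $A_{u_i} \to \Gamma$, and the same morphism simultaneously factors the interpretation of $v_i$. This is the technical core of the graph-theoretic construction in Almeida--Azevedo--Teixeira, and it is precisely where $B_2 \in \pv V$ is essential: without $A_{u_i} = A_{v_i}$ (which that hypothesis guarantees via the cited proposition), the interpretations of $u_i$ and $v_i$ would live on distinct graphs, and the right-hand side of the theorem would not even be syntactically meaningful, let alone universally available for the reduction above.
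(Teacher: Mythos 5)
The paper does not prove this theorem; it cites it from \cite[Theorem 5.9]{Almeida&Azevedo&Teixeira:1998} and uses it as a black box, so there is no in-paper argument to measure your proposal against.

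Assessed on its own terms, your proposal correctly sets up the two inclusions and correctly identifies the role of $B_2\in\pv V$ via $A_{u_i}=A_{v_i}$, but it is not a proof. The forward inclusion $g\pv V\subseteq\op (u_i=v_i;A_{u_i})\mid i\in I\cl$ is adequately argued, since the edges of $A_{u_i}$ are letters of $X$ and so a semigroupoid morphism from $\Om {A_{u_i}}{Sd}$ into a one-vertex semigroupoid $S$ does induce a continuous homomorphism $\Om XS\to S$ identifying the interpretations of $u_i$ and $v_i$. The reverse inclusion, however, has a genuine gap. Your ``Reiterman-type compactness argument'' to reduce an arbitrary path pseudoidentity $(p=q;\Gamma)$ valid on $\pv V$ to a single basis pseudoidentity $u_i=v_i$ is not a valid step: compactness tells you that, for each finite semigroupoid outside $g\pv V$, finitely many path pseudoidentities of $g\pv V$ suffice to exclude it, but it does not manufacture the asserted factorization of $(p=q;\Gamma)$ through one $u_i=v_i$ and a graph morphism $A_{u_i}\to\Gamma$. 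And the ``universal factorization property'' you then ascribe to $A_{u_i}$ is not a generic universal property of a free object; it is a theorem about the particular bounded-factor construction of $A_w$ in the cited reference, and proving it is the nontrivial content of the result. You flag this yourself as ``the main obstacle,'' and that flag is accurate, but the obstacle \emph{is} the proof: what you have supplied is a correct statement of what must be shown, resting on precisely the claim it is meant to establish.
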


  Theorem~\ref{t:basis-V-to-gV} is crucial for proving the following
  proposition. 

  \begin{prop}
    \label{p:-when-global-is-Z-based-left-version}
        Let $\pv Z$ be a left-permanent pseudovariety of semigroups,
    and let $\pv V$ be a
    pseudovariety of semigroups
    such that $\pv V=\pv Z\malcev \pv V$.
    If\/ $\pv V$ is local
    or if $B_2\in\pv V$, then $g\pv V$ is left $\pv Z$-based.
  \end{prop}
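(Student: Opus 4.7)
The plan is to apply Lemma~\ref{l:malcev-idempotent-preliminar} to the hypothesis $\pv V=\pv Z\malcev\pv V$ to extract a $\Sigma$-collection of substitutions $\mathscr F$ defining $\pv V$:
\[
  \pv V = \op\{\varphi(u)=\varphi(v) : (u=v)\in\Sigma,\ \varphi\in\mathscr F_{(u=v)}\}\cl.
\]
For any such $(u=v)$ and $\varphi$, the identities encoding left-permanence pass through $\varphi$ to yield, in $\Om{n(\varphi)}S$,
\[
  \varphi(u)^2=\varphi(u),\quad \varphi(v)=\varphi(u)\varphi(v),\quad u(\varphi(u),\varphi(v))=\varphi(u),\quad v(\varphi(u),\varphi(v))=\varphi(v).
\]
I would then observe that for every continuous homomorphism $\gamma\colon\Om 2S\to L:=\Om A{Sd}(c,c)$ (with $c$ a vertex of a finite graph $A$), the path pseudoidentity $(\gamma(u)=\gamma(v);A)$ is already of the shape required by Definition~\ref{def:global-Z-based}, taking $r_i=\gamma(x_i)$; so it suffices to produce a basis of $g\pv V$ all of whose elements have this form.

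To produce such a basis, I would handle the two cases separately. When $B_2\in\pv V$, Theorem~\ref{t:basis-V-to-gV} yields the basis $\{(\varphi(u)=\varphi(v);A_{\varphi(u)})\}$ of $g\pv V$. The first two displayed identities let us interpret (via Lemma~\ref{l:interpretation-of-w}) both $\varphi(u)$ and $\varphi(v)$ as coterminal loops at a common vertex $c$ of $\Om{A_{\varphi(u)}}{Sd}$; write $r_1,r_2\in L:=\Om{A_{\varphi(u)}}{Sd}(c,c)$ for these loops and $\gamma\colon\Om 2S\to L$ for the continuous homomorphism with $\gamma(x_i)=r_i$. The canonical vertex-collapsing continuous semigroup homomorphism $\pi\colon L\to\Om{n(\varphi)}S$, sending each edge of $A_{\varphi(u)}$ to the corresponding generator of $\Om{n(\varphi)}S$, is injective on $L$; applying $\pi$ to $\gamma(u)$ and using the last two displayed identities gives $\pi(\gamma(u))=u(\varphi(u),\varphi(v))=\varphi(u)=\pi(r_1)$, whence $\gamma(u)=r_1$ in $L$, and similarly $\gamma(v)=r_2$. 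Hence $(\varphi(u)=\varphi(v);A_{\varphi(u)})=(\gamma(u)=\gamma(v);A_{\varphi(u)})$ is of the required form. When $\pv V$ is local, $g\pv V=\ell\pv V$, and the standard description of $\ell\pv V$ relative to a basis of $\pv V$ gives a basis of path pseudoidentities of the form $(\theta(\varphi(u))=\theta(\varphi(v));A)$, where $(u=v)\in\Sigma$, $\varphi\in\mathscr F_{(u=v)}$, and $\theta\colon\Om{n(\varphi)}S\to L$ is continuous, chosen so that $\theta(\varphi(u))$ and $\theta(\varphi(v))$ are coterminal loops at some vertex $c$ of $A$; setting $\gamma=\theta\circ\varphi$, such a basis element is again of the shape $(\gamma(u)=\gamma(v);A)$.

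The principal obstacle will be the injectivity of $\pi|_L$ in the $B_2$ case, which rests on the free profinite structure of the local semigroups of $\Om A{Sd}$, together with pinning down, in the local case, the right basis of $\ell\pv V$ adapted to $\Sigma_{\mathscr F}$; once both of these are settled, the rewriting itself reduces to the tautology $\gamma(u)=u_L(\gamma(x_1),\gamma(x_2))$.
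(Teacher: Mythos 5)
Your approach is correct but departs from the paper's in the decisive step, and your own flag of the ``principal obstacle'' is well placed. The paper chooses $r_1,r_2$ to be the interpretations of $\varphi(x_1),\varphi(x_2)$ in $\Om{A_{\varphi(v)}}{Sd}$. This requires a careful verification, using the content of $v$ and the algebraic consequences of left-permanence (e.g.\ $\pv K\models\varphi(x_1)=\varphi(x_2)$ gives a common origin, and factorizations like $x_1x_2$ of $v$ pin down the termini), that these two paths are loops at a common vertex $c$; a case split on $|c(v)|\in\{1,2\}$ is needed, including the degenerate situation $c(v)=\{x_2\}$. The paper then proves, by a direct convergence argument with approximating words, that the interpretation of $\varphi(w)$ equals $w_L(r_1,r_2)$ for $w\in\{u,v\}$. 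You instead take $r_1,r_2$ to be the interpretations of $\varphi(u),\varphi(v)$ themselves, which are automatically coterminal (they appear in a path pseudoidentity from Theorem~\ref{t:basis-V-to-gV}) and loops (since $\varphi(u)$ is idempotent); this completely eliminates the case analysis. The price is that you must know the vertex-collapsing map $\pi\colon\Om{A_{\varphi(u)}}{Sd}\to\Om{n(\varphi)}S$ is injective (at least on $L$). This is true: given a finite $A_{\varphi(u)}$-generated semigroupoid $S$ and a continuous morphism $\psi$ separating two profinite paths, the consolidation $S_{cd}$ (adjoin a zero, define non-composable products to be $0$) is a finite semigroup receiving a continuous homomorphism $\theta$ from $\Om{n(\varphi)}S$ with $\theta\circ\pi=\iota\circ\psi$, so $\pi$ also separates them. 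But this is a genuine lemma that your writeup currently only asserts; without it your proof is not complete, and the paper deliberately bypasses it with the more elementary convergence argument. In the local case, your phrasing of the basis of $\ell\pv V$ (with an auxiliary $\theta$ and general graphs) is more elaborate than needed: a basis of $\pv V$ regarded as path pseudoidentities over one-vertex bouquet graphs already exhibits $\ell\pv V$ as left $\pv Z$-based, which is what the paper does and is cleaner.
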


  \begin{proof}
    Let $\Sigma$ be a basis of~$\pv Z$
    comprised by left-permanent pseudoidentities.
    Let $\mathscr {H}$ be the set of continuous homomorphisms
    and $\mathscr {F}$ the $\Sigma$-collection of substitutions
    as described in Theorem~\ref{t:base-for-malcev-product}.
    Then
    $\pv V=\op\Sigma_{\mathscr {F}} \cl$, since
    $\pv V=\pv Z\malcev \pv V$.
    
    The pseudovariety $\ell\pv V$ has a basis
    formed by  path pseudoidentities
    of the form $(\varphi(u)=\varphi(v); X_{\varphi})$,
    where $(u=v)$ runs over $\Sigma$ and $\varphi:\Om 2S\to \Om {X_{\varphi}}S $
    runs over~$\mathscr H$.
    Therefore, $\ell\pv V$ is left $\pv Z$-based.
    In particular, if $\pv V$ is local then
    $g\pv V$ is left~$\pv Z$-based.

    Suppose now that $B_2\in\pv V$.
    The set of path pseudoidentities of the form
    \begin{equation}\label{eq:-when-global-is-Z-based-left-version-1}
    \Bigl(\varphi(u)=\varphi(v);\,A_{\varphi(v)}\Bigr),\quad
    \end{equation}    
    where
    $(u=v)$ runs over $\Sigma$ and $\varphi$ runs over $\mathscr H$,
    defines a basis for $g\pv V$, by Theorem~\ref{t:basis-V-to-gV}.

    Fix $(u=v)\in\Sigma$
    and $\varphi\in \mathscr H$.
    Suppose first that $|c(v)|=2$.
    Then $\varphi(x_1)$ and $\varphi(x_2)$
    are factors of $\varphi(v)$
    and so they can be interpreted as profinite paths in the graph
    $A_{\varphi(v)}$.
    We denote these paths by $r_1$ and $r_2$ respectively.
    We want to show that $r_1$ and $r_2$ are loops rooted at the same
    vertex.
    
    By the definition of $\mathscr H$,
    we have $\pv V\models \varphi(x_1)=\varphi(x_2)$.
    Since $\pv V=\pv Z\malcev \pv V$, we have
    $\pv Z\subseteq \pv V$, thus
    $\pv K\models \varphi(x_1)=\varphi(x_2)$
    by Lemma~\ref{l:K-is-contained-left-permanent}.
    In particular, $\varphi(x_1)$ and $\varphi(x_2)$
    start with same letter, and so $r_1$ and $r_2$ have the same
    origin.
    It will be convenient to use the notation $\alpha(e)$ and
    $\omega(e)$
    for the origin and the terminus of an edge $e$, respectively.
    Hence,
    \begin{equation}
      \label{eq:equal-origin-1}
    \alpha(r_1)=\alpha(r_2).
    \end{equation}

    We next show that $\omega(r_1)=\omega(r_2)$.
    We treat the case $x_1\in c(u)$ (the case $x_2\in c(u)$ is
    analogous).
    As $v=uv$ and
    $c(v)=\{x_1,x_2\}$, the word $x_1x_2$ is a factor of~$v$, thus
    $\varphi(x_1)\varphi(x_2)$ is a factor of
    $\varphi(v)$.
    Therefore,
    \begin{equation}
      \label{eq:equal-origin-2}
      \omega(r_1)=\alpha(r_2).
    \end{equation}
    Hence, by~(\ref{eq:equal-origin-1})
    and~(\ref{eq:equal-origin-2}), $r_1$ is a loop rooted at vertex
    $c=\alpha(r_2)$.
    If $|c(u)|=2$ or $x_2$ is not the last letter of $v$, then
    $x_2x_1$ is a factor of $v$, thus
    $\varphi(x_2)\varphi(x_1)$ is a factor of
    $\varphi(v)$
    and $\omega(r_2)=\alpha(r_1)=c$.
    The other case to consider occurs if
    $x_2\notin c(u)$ and $x_2$ is the last letter of $v$.
    Then $\omega(r_1)=\omega(\varphi(u))$
    and $\omega(r_2)=\omega(\varphi(v))$.
    But $\omega(\varphi(u))=\omega(\varphi(v))$, by the definition of
    path pseudoidentity, and so $\omega(r_2)=\omega(r_1)=c$.
    In both cases, we conclude that $r_2$ is a loop rooted at $c$.
    
    Denote by $L$ the local semigroup of $A_{\varphi(v)}$ at $c$.
    We claim that if $w\in \{u,v\}$
    then the interpretation
    of the pseudoword
    $\varphi(w)$
    as a profinite path of $A_{\varphi(v)}$
    is precisely the profinite path
    $w_L(r_1,r_2)$. To prove the claim, let $(w_n)_n$, $(r_{1,n})_n$
    and $(r_{2,n})_n$ be sequences of finite words
    converging respectively to the pseudowords $w$,
    $\varphi(x_1)$ and $\varphi(x_2)$,
    where $r_{i,n}$ as a path of
    $A_{\varphi(v)}$ converges to $r_i$, for $i\in\{1,2\}$.
    Since the sequence of finite words $(w_n(r_{1,n},r_{2,n}))_n$
    converges to
    $w_{\Om {X_{\varphi}}S}(\varphi(x_1),\varphi(x_2))=\varphi(w)$,
    by Lemma~\ref{l:interpretation-of-w} it suffices to prove that
    the sequence of finite paths $(w_n(r_{1,n},r_{2,n}))_n$ converges to
    $w_L(r_1,r_2)$ in $\Om {A_{\varphi(v)}}{Sd}$.
    We may assume that $r_{1,n}$ and $r_{2,n}$ are elements of the
    local semigroup $L$ for all $n$,
    and so $w_n(r_{1,n},r_{2,n})$ as a path
    of $A_{\varphi(v)}$ equals $(w_n)_L(r_{1,n},r_{2,n})$.
    Let $\psi$ be a continuous homomorphism from
    $\Om {A_{\varphi(v)}}{Sd}$ onto a finite semigroupoid $S$.
    Then $\psi$ induces, by restriction, a continuous semigroup homomorphism
    $\psi_c$ from $L$ into the local semigroup  of $S$ at the vertex
    $\psi(c)$, denoted $S_{\psi(c)}$.
    Then, for large enough $n$,
    \begin{align*}
      \psi_c((w_n)_L(r_{1,n},r_{2,n}))&=(w_n)_{S_{\psi(c)}}(\psi_c(r_{1,n}),\psi_c(r_{2,n}))\\
      &=(w_n)_{S_{\psi(c)}}(\psi_c(r_{1}),\psi_c(r_{2}))\\
      &=w_{S_{\psi(c)}}(\psi_c(r_{1}),\psi_c(r_{2}))\\
      &=\psi_c(w_L(r_{1},r_{2})).
    \end{align*}
    Hence $((w_n)_L(r_{1,n},r_{2,n}))_n$
    converges to $w_L(r_{1},r_{2})$, proving the claim.
    Therefore, when $|c(v)|=2$,
    the pseudoidentity~(\ref{eq:-when-global-is-Z-based-left-version-1})
    is actually the pseudoidentity    
    \begin{equation}\label{eq:-when-global-is-Z-based-left-version-2}
    \Bigl(u_L(r_1,r_2)=
    v_L(r_1,r_2);\,A_{\varphi(v)}\Bigr).
    \end{equation}
    
    Let us now consider the case $|c(v)|=1$.
    Then $c(u)=c(v)$ because $v=uv$. 
    If $c(v)=\{x_1\}$, then, as $u=u^2$ and $v=v(u,v)$,
    we have $u=x_1^\omega=v$.
    Suppose that $c(v)=\{x_2\}$
    and let $r$ be the interpretation of
    $\varphi(x_2)$ as a profinite path of $A_{\varphi(v)}$.
    Since $x_2^2$ is factor of $v$,
    we know that $r$ is a loop rooted at a vertex $c$.
    For
    each $w\in\{u,v\}$,
    we have $w_{\Om {X_{\varphi}}S}(\varphi(x_1),\varphi(x_2))=
    w_{\Om {X_{\varphi}}S}(\varphi(x_2),\varphi(x_2))$.    
    Then, with the same kind of arguments used
    in the case $|c(v)|=2$,
    we conclude that 
    the interpretation
    of $w_{\Om {X_{\varphi}}S}(\varphi(x_1),\varphi(x_2))$
    as a profinite path of $A_{\varphi(v)}$
    is the profinite path $w_L(r,r)$.
    
    Putting cases $|c(v)|=1$ and $|c(v)|=2$
    together,
    we conclude that $g\pv V$ has a basis of pseudoidentities
    of the form~(\ref{eq:-when-global-is-Z-based-left-version-2}),
    which proves that $g\pv V$ is left $\pv Z$-based.    
  \end{proof}

  For the proof of the next proposition we need to add some more
  notation and definitions.
  
  For a graph $A$, we denote by $V(A)$ the set of its vertices, and
  by $E(A)$ the set of its edges.

  The following definitions are taken
  from~\cite[Section 5.2]{Almeida:1994a}. They will be used also in
  Section~\ref{sec:pseud-pv-dsastpv}.
  If $u$ is an element of $\Om AS$, then there is a unique word
  $\te n(u)$ of length at most $n$ such that $\pv D_n\models u=\te n(u)$.
  If $u$ is a word of $A^+$ of length at most $n$, then
  $u=\te n(u)$, otherwise $\te n(u)$ is the unique word $w$
  of length $n$ such that $u\in(\Om AS)^1\cdot w$. Dually, one may
  consider the unique word $\be n(u)$ such that
  $\pv K_n\models u=\be n(u)$, where $\pv K_n$ is the dual of $\pv  D_n$. 
  These definitions are extended to the empty word by
  letting $\be n(1)=\te n(1)=1$.
  
   \begin{prop}\label{p:malcev-stability-preserved-by-D-2}
     Let $\pv Z$ be a left-permanent pseudovariety of semigroups,
     and let $\pv V$ be a pseudovariety of semigroups
     such that $g\pv V$ is left $\pv Z$-based.
     Then  $\pv Z\malcev(\pv V\ast \pv D_n)=\pv V\ast \pv D_n$,
     for every $n\geq 1$.
   \end{prop}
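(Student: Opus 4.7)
The reverse inclusion $\pv V\ast \pv D_n \subseteq \pv Z\malcev(\pv V\ast \pv D_n)$ is immediate, so my plan is to establish the other containment by invoking Lemma~\ref{l:malcev-idempotent-preliminar}. Let $\Sigma$ be a basis of $\pv Z$ consisting of left-permanent (in particular, permanent) pseudoidentities, which exists since $\pv Z$ is left-permanent. It then suffices to produce a $\Sigma$-collection of substitutions $\mathscr F$ with $\pv V\ast \pv D_n = \op \Sigma_{\mathscr F}\cl$.

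By hypothesis, $g\pv V$ has a basis $\Gamma = \bigcup_{(u=v)\in\Sigma}\Gamma_{(u=v)}$ whose elements have the form $(u_L(r_1,r_2)=v_L(r_1,r_2); A_{\mathfrak p})$, with $r_1,r_2$ loops at a common vertex $c$ of $\Om{A_{\mathfrak p}}{Sd}$. I plan to couple this with the translation, developed in~\cite{Almeida&Azevedo&Teixeira:1998}, between bases of $g\pv V$ and bases of $\pv V\ast \pv D_n$: each path pseudoidentity $(w_1=w_2;A)$ of $g\pv V$ is encoded as the family of semigroup pseudoidentities obtained by evaluating $w_1$ and $w_2$ under edge labelings of $A$ by pseudowords that are compatible at each vertex in the sense governed by the operators $\te n$ and $\be n$.

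The decisive point is that, because $r_1$ and $r_2$ are loops rooted at the same vertex $c$, any such compatible labeling $\sigma$ of $A_{\mathfrak p}$ sends $r_1$ and $r_2$ to pseudowords $s_1 = \sigma(r_1)$ and $s_2 = \sigma(r_2)$ lying in a single free profinite semigroup $\Om{X_\sigma}S$, and the associated semigroup pseudoidentity reads $u(s_1,s_2) = v(s_1,s_2)$. Equivalently, it is $\varphi_\sigma(u) = \varphi_\sigma(v)$ for the continuous homomorphism $\varphi_\sigma: \Om 2 S \to \Om{X_\sigma}S$ defined by $\varphi_\sigma(x_i) = s_i$. Taking $\mathscr F_{(u=v)}$ to be the collection of all such $\varphi_\sigma$, as $\mathfrak p$ ranges over $\Gamma_{(u=v)}$ and $\sigma$ over compatible labelings of $A_{\mathfrak p}$, yields a $\Sigma$-collection of substitutions $\mathscr F$ defining $\pv V\ast \pv D_n$, and Lemma~\ref{l:malcev-idempotent-preliminar} then delivers the conclusion.

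The main technical obstacle will be the bookkeeping needed to justify the translation from the graph basis of $g\pv V$ to a semigroup basis of $\pv V \ast \pv D_n$ in a form that preserves the substitutional structure of the left $\pv Z$-based path pseudoidentities: one must verify both that each compatible labeling $\sigma$ produces a semigroup pseudoidentity of the required form $u(s_1,s_2)=v(s_1,s_2)$, and that these pseudoidentities together suffice to define $\pv V\ast \pv D_n$. The loop-at-a-common-vertex property, extracted from left-permanence in the proof of Proposition~\ref{p:-when-global-is-Z-based-left-version}, is precisely the structural feature ensuring that $s_1$ and $s_2$ live in a common free profinite semigroup and can therefore serve as the images of $x_1, x_2$ under a well-defined continuous homomorphism $\varphi_\sigma$.
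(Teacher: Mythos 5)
Your overall scheme — express $\pv V\ast\pv D_n$ via a basis derived from the left $\pv Z$-based basis of $g\pv V$, package the resulting pseudoidentities as a $\Sigma$-collection of substitutions, then invoke Lemma~\ref{l:malcev-idempotent-preliminar} — is the paper's scheme, and your observation that the common vertex $c$ makes $\varphi_\sigma$ well defined is also made in the paper (actually in Proposition~\ref{p:-when-global-is-Z-based-left-version}, where the loops-at-a-common-vertex structure is established). However, there is a genuine gap at the step you call ``bookkeeping.''

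The Almeida--Weil basis theorem for $\pv V\ast\pv D_n$ does \emph{not} yield pseudoidentities of the clean substitutional form $u(s_1,s_2)=v(s_1,s_2)$. Each basis pseudoidentity carries a prefix: it is of the form $\pi_c\cdot\delta(u_L(r_1,r_2))=\pi_c\cdot\delta(v_L(r_1,r_2))$, where $\pi_c$ is a word of length at most $n$ attached to the vertex $c$, subject to the compatibility constraints $\pv D_n\models\pi_{\alpha s}\delta(s)=\pi_{\omega s}$ for edges $s$. As stated, these are not of the shape $\varphi(u)=\varphi(v)$ required by the definition of a $\Sigma$-collection of substitutions, so Lemma~\ref{l:malcev-idempotent-preliminar} does not apply directly. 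You assert that the labeling produces $u(s_1,s_2)=v(s_1,s_2)$ and flag only a verification task, but the assertion is false without a nontrivial equivalence argument, and the needed equivalence is exactly where left-permanence enters.

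The paper removes the prefix by exploiting the defining relations $u=u^2$ and $v=uv$ of a left-permanent pseudoidentity. Since $u=u^2$, the profinite path $u_L(r_1,r_2)$ is idempotent, hence infinite, so it factors as $ts_1\cdots s_n$ with $s_i\in E(A_{\mathfrak p})$ and $\omega s_n=c$; chaining the compatibility constraints shows $\te n(\delta(u_L(r_1,r_2)))=\pi_c$, i.e. $\delta(u_L(r_1,r_2))=w\pi_c$ for some $w$. Left-multiplying the prefixed pseudoidentity by $w$ gives $\delta(u_L(r_1,r_2))^2=\delta(u_L(r_1,r_2)v_L(r_1,r_2))$, and then $u=u^2$ and $v=uv$ let one rewrite this as $\delta(u_L(r_1,r_2))=\delta(v_L(r_1,r_2))$, i.e. $\psi_\delta(u)=\psi_\delta(v)$. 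This prefix-absorption argument is the heart of the proof and is missing from your proposal; without it, the candidate set $\mathscr F_{(u=v)}$ does not define $\pv V\ast\pv D_n$ in the sense required.
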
       
 
  \begin{proof}
    For $\pv W=g\pv V$, we retain the notation from Definition~\ref{def:global-Z-based}.
    By the application of the Almeida-Weil
    basis theorem\footnote{The Almeida-Weil basis theorem describes a
      basis of pseudoidentities for semidirect products
      $\pv V\ast\pv W$ of semigroup pseudovarieties.
      The proof has a gap found
      by Rhodes and Steinberg, but it works when
      $g\pv V$ has a basis of pseudoidentities with bounded number of
      vertices or when $\pv W$ is locally finite (cf.~\cite[Theorem 3.7.15]{Rhodes&Steinberg:2009}).
      Since $\pv {D}_n$ is locally finite, the result holds in the
      cases in which we are interested.}  to semidirect products
    with~${\pv D}_n$,
    which is explained in the proof of Theorem 4.1
    in~\cite{Almeida&Azevedo&Teixeira:1998},    
    the pseudovariety 
    $\pv V\ast \pv D_n$ has a basis
    \begin{equation*}
      \Upsilon=\bigcup\,\Bigl\{\Upsilon_{\mathfrak p}
      \mid (u=v)\in\Sigma,\,{\mathfrak p}\in\Gamma_{(u=v)}\Bigr\}
    \end{equation*}
    where each $\Upsilon_{\mathfrak p}$
    has the following property:
     an element $\mathfrak e$ of $\Upsilon_{\mathfrak p}$
      is a pseudoidentity of the form 
          \begin{equation}
      \label{eq:special-basis-for-VDn}
    \pi_c\cdot \delta(u_L(r_1,r_2))=\pi_c\cdot\delta(v_L(r_1,r_2)),      
    \end{equation}
    where $\delta:\Om {A_{\mathfrak p}}{Sd}\to \Om {B_{\mathfrak e}}S$
    is a continuous semigroupoid
    homomorphism and $(\pi_d)_{d\in V(A_{\mathfrak p})}$
    is a family of words of $B_{\mathfrak e}^+$ with length at most $n$,
    such that, among other restrictions, one has
          \begin{equation}
      \label{eq:compatibility-condition}
    \pv D_n\models \pi_{\alpha s}\delta(s)=
    \pi_{\omega s},\quad\text{for every $s\in E(A_{\mathfrak p})$}.
    \end{equation}
     We shall denote by $\mathscr {D}_{\mathfrak p}$
     the set of all continuous homomorphisms $\delta$ arising in this way, for a
     fixed $\mathfrak p$.
     
    Let us fix $(u=v)\in \Sigma$, $\mathfrak p\in \Gamma_{(u=v)}$
    and $\mathfrak e\in \Upsilon_{\mathfrak p}$ with the above
    definitions and notations.
    
    Since $u_L(r_1,r_2)=u_L(r_1,r_2)^2$, the profinite path
    $u_L(r_1,r_2)$ is not finite, thus $u_L(r_1,r_2)=ts_1\cdots s_n$
    for some $t\in E(\Om {A_{\mathfrak p}}{Sd})$ and
    $s_1,\ldots,s_n\in E(A_{\mathfrak p})$. 
    Note that $\omega s_n=c$.
    Applying (\ref{eq:compatibility-condition})
    consecutively, we then get
    \begin{equation*}
    \pv D_n\models \pi_{\alpha s_1}\delta(s_1s_2s_3\cdots s_n)=
    \pi_{\alpha s_2}\delta(s_2s_3\cdots s_n)=
    \cdots =
    \pi_{\omega s_n}=\pi_c,      
    \end{equation*}
    which means that
    $\te n(\delta(u_L(r_1,r_2)))=\te n(\delta(s_1\cdots s_n))=\pi_{c}$.
    Therefore, there is a pseudoword $w$ such that
    $\delta(u_L(r_1,r_2))=w\pi_c$.
    Hence, if a semigroup $S$ satisfies~(\ref{eq:special-basis-for-VDn}),
    it also satisfies $\delta(u_L(r_1,r_2))^2=
    \delta(u_L(r_1,r_2)v_L(r_1,r_2))$.
    Since $u=u^2$ and $v=uv$,
    we conclude that $S$ satisfies~(\ref{eq:special-basis-for-VDn})
    if and only if it satisfies
        \begin{equation}
      \label{eq:special-basis-for-VDn-modified}
    \delta(u_L(r_1,r_2))=
    \delta(v_L(r_1,r_2)).
    \end{equation}
    Let
    $\psi_{\delta}$ be the continuous homomorphism $\Om 2S\to \Om
    {B_{\mathfrak e}}S$ such that
    $\psi_{\delta}(x_i)=\delta(r_i)$
    for each $i\in\{1,2\}$.
    Then~(\ref{eq:special-basis-for-VDn-modified}),
        which is equivalent to~(\ref{eq:special-basis-for-VDn}),
    can be rewritten as
    \begin{equation*}
          \psi_\delta(u)=\psi_{\delta}(v).
    \end{equation*}
        Therefore, denoting by
    $\mathscr {F}_{(u=v)}$ the set
        $\{\psi_{\delta}
    \mid \delta\in\mathscr {D}_{\mathfrak  p},\,\mathfrak p\in\Gamma_{(u=v)}\} $,
    we proved that the family
    $(\mathscr {F}_{(u=v)})_{(u=v)\in \Sigma}$
    is a $\Sigma$-collection of substitutions
    defining $\pv V\ast \pv D_n$.
    The result now follows from Lemma~\ref{l:malcev-idempotent-preliminar}.
  \end{proof}

 \begin{proof}[Conclusion of the proof of
    Theorem~\ref{t:z-malc-ast-d}]
            Let $\pv Z$ be a permanent pseudovariety of
        semigroups,
        and let $\pv V$ be a pseudovariety of semigroups
        such that $\pv Z\malcev \pv V=\pv V$.
        Suppose also that
        $B_2\in\pv V$,
        or that $\pv V$ is local and contains some nontrivial monoid.
    If $\pv Z$ is left-permanent, it follows immediately from
    Propositions~\ref{p:-when-global-is-Z-based-left-version}
    and~\ref{p:malcev-stability-preserved-by-D-2}
    that the equality
     \begin{equation}
          \label{eq:z-malc-ast-d-2}
          \pv Z\malcev(\pv V\ast\pv D_k)=\pv V\ast\pv D_k
    \end{equation}
    holds.

    If $\pv Z$ is right-permanent,
    then $\pv Z^\rev$ is
    left-permanent by Proposition~\ref{p:reverse-left}.
    Also, if $\pv V$ is local, then $\pv V^\rev$ is local (an easy
    consequence
    of a remark made in the paragraph before Lemma 19.3
    from~\cite{Tilson:1987}), and
    $B_2\in \pv V$ implies $B_2\in\pv V^\rev$.
    Hence, from the already proved case, together
    with Propositions~\ref{p:reverse-VastD}
    and~\ref{p:reverse-malcev},
    we  obtain
    \begin{equation*}
      (\pv Z\malcev(\pv V\ast\pv D_k))^\rev=
      \pv Z^\rev\malcev(\pv V^\rev\ast\pv D_k)
      =\pv V^\rev\ast\pv D_k=(\pv V\ast\pv D_k)^\rev,
    \end{equation*}
    proving equality (\ref{eq:z-malc-ast-d-2}) for the case of
    right-permanent pseudovarieties.

    Finally, suppose that $\pv Z$ is
    a permanent pseudovariety of semigroups.
    Then $\pv Z=\bigcap_{i\in I}\pv Z_i$
    for some family $(\pv Z_i)_{i\in I}$
    consisting of left-permanent or right-permanent pseudovarieties of
    semigroups.
    The Mal'cev product of pseudovarieties
    is right-distributive over intersections
    of pseudovarieties~\cite[Corollary 3.2]{Pin&Weil:1996a},
    and therefore we have $\pv Z\malcev (\pv V\ast\pv D_k)=
    \bigcap_{i\in I}\pv Z_i\malcev (\pv V\ast\pv D_k)$.
    This concludes the proof, since we have
    shown that
    $\pv Z_i\malcev (\pv V\ast\pv D_k)=\pv V\ast\pv D_k$.
  \end{proof}

\section{The pseudovarieties $\pv {DS}\ast\pv D_k$}\label{sec:pseud-pv-dsastpv}

In this section we obtain some properties of
implicit operations on $\pv {DS}\ast\pv D_k$
similar to fundamental properties of
implicit operations on $\pv {DS}$ presented in~\cite[Chapter 8]{Almeida:1994a}.
The former properties are obtained by reduction to the latter via
general properties of semidirect products of the form
$\pv V\ast\pv D_k$
and the use of iterated left basic factorizations of implicit operations.

\subsection{Semidirect products of the form $\pv V\ast\pv D_k$ and
  the mapping $\Phi_k$}\label{sec:mapping-phi_k}

Let $k$ be a positive integer.
Consider the mapping $A^+\to (A^{k+1})^\ast$
that maps to $1$
each word of $A^+$ with length less than $k+1$,
and maps each word $u$ of $A^+$ with length at least $k+1$
to the word of $(A^{k+1})^+$
formed by the consecutive factors of length $k+1$ of $u$.
This mapping has a unique continuous extension to
a mapping
$\Phi_k:\Om AS\to (\Om {A^{k+1}}S)^1$~\cite[Lemma 10.6.11]{Almeida:1994a}.
Note that, for $u\in\Om AS$, one has $\Phi_k(u)=1$ if and only if
$u$ has length less than $k+1$.
The mapping $\Phi_k$
has the following property, which we use frequently without
explicit reference: for every $u,v\in \Om AS$
one has
\begin{equation*}
  \Phi_k(uv)=\Phi_k(u\,\be k(v))\cdot \Phi_k(v)=\Phi_k(u)\cdot\Phi_k(\te k(u)v).
\end{equation*}
One may informally think
that, for $u\in\Om AS$, the pseudoword $\Phi_k(u)$ is the result of ``reading''
the consecutive factors of length $k+1$ of $u$.

The mapping $\Phi_k$ appears frequently in the study
of semidirect products of the form $\pv V\ast\pv D_k$.
It is in this context that it is introduced
in~\cite[Section 10.6]{Almeida:1994a}.

\begin{teor}[{\cite[Theorem 10.6.12]{Almeida:1994a}}]\label{t:basic-phi-k}
  Let $\pv V$ be a pseudovariety of semigroups containing some
  nontrivial monoid.
  Let $u,v\in\Om AS$.
  Then $\pv V\ast\pv D_k\models u=v$ if and only if
  $\be k(u)=\be k(v)$,
  $\te k(u)=\te k(v)$ and $\pv V\models \Phi_k(u)=\Phi_k(v)$.  
\end{teor}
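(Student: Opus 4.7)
The plan is to prove each direction separately; both directions hinge on how semigroups in $\pv V\ast\pv D_k$ ``read'' words through overlapping windows of length $k+1$.

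For necessity, assume $\pv V\ast\pv D_k\models u=v$. Standard results on semidirect products with $\pv D_k$ show that, because $\pv V$ contains a nontrivial monoid, $\pv V\ast\pv D_k$ contains both $\pv K_k$ and $\pv D_k$: the inclusion $\pv D_k\subseteq\pv V\ast\pv D_k$ is trivial, and to obtain $\pv K_k$ one can use a wreath product of a nontrivial monoid of $\pv V$ by a suitable $\pv D_k$-semigroup that distinguishes the initial $k$ letters from the rest. This immediately yields $\be k(u)=\be k(v)$ and $\te k(u)=\te k(v)$. To obtain $\pv V\models\Phi_k(u)=\Phi_k(v)$, I would, for each $S\in\pv V$ and each continuous homomorphism $\varphi\colon\Om{A^{k+1}}S\to S$, construct a semigroup $T_\varphi$ in $\pv V\ast\pv D_k$ together with a homomorphism $\rho\colon\Om AS\to T_\varphi$ whose $S$-component at $u$ equals $\varphi(\Phi_k(u))$ whenever $u$ has length greater than $k$. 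The natural candidate is a wreath product of $S$ with the (finite) free pro-$\pv D_k$ semigroup on $A$, where the action of the base reads each incoming letter together with the $k$ previously remembered ones, so that successive $(k+1)$-windows of $u$ determine successive contributions to the $S$-coordinate.

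For sufficiency, suppose the three conditions hold and take any $T\in\pv V\ast\pv D_k$. By the standard characterization of semidirect products, $T$ divides a wreath product $S\wr D$ with $S\in\pv V$ and $D\in\pv D_k$. A continuous homomorphism $\psi\colon\Om AS\to S\wr D$ has a $D$-component determined by $\te k$ (since $D\in\pv D_k$) and an $S^D$-component which, pointwise, is a product of factors each depending on a $(k+1)$-window of the input; this factorization is the essence of the wreath product and relies on the fact that in $D$ the state reached after reading a prefix depends only on its last $k$ letters. Hence the $S^D$-component factors through $\Phi_k$, and the three hypotheses together force $\psi(u)=\psi(v)$. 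Short words (length at most $k$) are handled directly by the $\be k$ and $\te k$ conditions, since $\Phi_k$ is trivial there.

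The main obstacle is the explicit construction of $T_\varphi$ in the necessity direction together with the verification that its $S$-coordinate precisely computes $\varphi\circ\Phi_k$: this is a bookkeeping exercise where off-by-one errors between length~$k$ and length~$k{+}1$ conventions are easy to make, and where the ``priming'' caused by the first $k$ letters must be treated separately from the windowing that follows. A secondary difficulty is the clean extension of arguments from finite words to all pseudowords by continuity, using that $\om AS$ is dense in $\Om AS$ and that $\Phi_k$, $\be k$, and $\te k$ are continuous.
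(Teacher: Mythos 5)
The paper does not prove this statement: it is quoted directly as Theorem~10.6.12 of Almeida's book, so there is no in‑paper argument to compare against. Your outline does track the usual proof strategy (decompose members of $\pv V\ast\pv D_k$ via wreath products $S\wr D$ with $S\in\pv V$, $D\in\pv D_k$; observe that the $S$‑coordinate of the evaluation of a word factors through overlapping windows of length $k+1$, i.e. through $\Phi_k$; and for necessity build wreath products realising $\te k$, $\be k$ and a given $\varphi\circ\Phi_k$). For the sufficiency direction the sketch is essentially sound.

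The weak point is in the necessity direction, and I think it is more than the ``bookkeeping'' you flag. In a wreath product $S\wr D$ with $D=\Om A{D_k}$, evaluating a word $w=a_1\cdots a_n$ at the initial state gives an $S$‑value of the form
\[
f_{a_1}(1)\,f_{a_2}(a_1)\cdots f_{a_k}(a_1\cdots a_{k-1})\cdot \varphi(\Phi_k(w)),
\]
so what you actually obtain is $P_1(\be k(w))\cdot\varphi(\Phi_k(w))$, not $\varphi(\Phi_k(w))$. When $\be k(u)=\be k(v)$, the priming factor $P_1$ is the \emph{same} for $u$ and $v$, so from $\rho(u)=\rho(v)$ you only get $P_1\varphi(\Phi_k(u))=P_1\varphi(\Phi_k(v))$, and $P_1$ need not be left‑cancellable in $S$. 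If $\pv V$ is \emph{monoidal} one can simply set $f_a(d)=1_S$ during the priming phase, and the gap disappears; but the hypothesis as stated is only that $\pv V$ contain some nontrivial monoid, and for a general $S\in\pv V$ there is no identity to emit. As a concrete warning, take $k=1$, $A=\{a\}$, $u=a^2$, $v=a^3$ and $\pv V=\pv N_2\vee\pv {Sl}$ (which contains the nontrivial monoid $U_1$, satisfies $xy=xy^2$, but does \emph{not} satisfy $z=z^2$): every wreath product $S\wr D$ with $S\in\pv V$, $D\in\pv D_1$ does satisfy $a^2=a^3$, while $\pv V\not\models\Phi_1(a^2)=\Phi_1(a^3)$. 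So either a further hypothesis (such as $\pv V$ monoidal, which is what actually holds in every application in this paper, since $\pv S$ and $\pv {DS}$ are monoidal) or a genuinely different construction is needed to close the necessity direction; it cannot be dismissed as off‑by‑one bookkeeping. I recommend you look carefully at the precise hypotheses in Almeida's book and at how the initial segment is handled there before trusting your $T_\varphi$ construction as written.
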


For the following corollary of Theorem~\ref{t:basic-phi-k},
we introduce some notation.
For an alphabet $A$ and a pseudovariety of semigroups $\pv V$, the
image of an element $u$ of $\Om AS$ by the canonical homomorphism
$\Om AS\to \Om AV$ will be denoted by $[u]_{\pv V}$.
Recall that $\pv V\models u=v$ if and only if $[u]_{\pv V}=[v]_{\pv V}$.
More generally, if $\theta$ is a binary relation on $\Om AV$ and
$[u]_{\pv V}\mathrel {\theta}[v]_{\pv V}$, then
we may use the notation $\pv V\models u\mathrel {\theta}v$.

\begin{cor}\label{c:basic-phi-k}
  Let $\pv V$ be a pseudovariety of semigroups containing some
  nontrivial monoid. Let $\K$ be one of the Green's relations
  $\J$, $\R$ or $\L$.
  For every $u,v\in\Om AS$, the following properties hold:
  \begin{enumerate}
  \item if $\pv V\ast\pv D_k\models u\leq_{\K}v$ then
  $\pv V\models \Phi_k(u)\leq_{\K}\Phi_k(v)$;\label{item:basic-phi-k-green-1}
  \item if $[u]_{\pv V\ast\pv D_k}$ is regular then $[\Phi_k(u)]_{\pv  V}$
    is regular\label{item:basic-phi-k-green-2}.
  \end{enumerate}
\end{cor}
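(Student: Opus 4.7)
The plan is to reduce both parts to Theorem~\ref{t:basic-phi-k} by lifting the $\K$-preorder or regularity witnesses to equations in $\Om AS$, and then to apply the multiplicative formulas for $\Phi_k$ recalled in Subsection~\ref{sec:mapping-phi_k} in order to extract from the resulting $\Phi_k$-equality the corresponding $\K$-preorder or regularity statement in $\pv V$. Density of $\om AS$ and continuity of $\Phi_k$ allow us to choose the witnesses in $\Om AS$ without loss of generality.

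For item~(\ref{item:basic-phi-k-green-1}), suppose first that $\pv V\ast\pv D_k\models u\leq_{\R} v$, so there is $w\in(\Om AS)^1$ with $\pv V\ast\pv D_k\models u=vw$. Theorem~\ref{t:basic-phi-k} gives $\pv V\models\Phi_k(u)=\Phi_k(vw)$, and the identity $\Phi_k(vw)=\Phi_k(v)\cdot\Phi_k(\te k(v)\,w)$ yields $\pv V\models\Phi_k(u)\leq_{\R}\Phi_k(v)$. The case $\leq_{\L}$ is handled dually using $\Phi_k(wv)=\Phi_k(w\,\be k(v))\cdot\Phi_k(v)$. For $\leq_{\J}$, write $\pv V\ast\pv D_k\models u=xvy$ and combine both factorizations to obtain
\begin{equation*}
\Phi_k(xvy)=\Phi_k(x\,\be k(v))\cdot\Phi_k(v)\cdot\Phi_k(\te k(xv)\,y),
\end{equation*}
from which $\pv V\models\Phi_k(u)\leq_{\J}\Phi_k(v)$ follows.

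For item~(\ref{item:basic-phi-k-green-2}), if $[u]_{\pv V\ast\pv D_k}$ is regular, pick $w\in\Om AS$ with $\pv V\ast\pv D_k\models u=uwu$. Theorem~\ref{t:basic-phi-k} combined with two applications of the multiplicativity of $\Phi_k$ produces
\begin{equation*}
\pv V\models\Phi_k(u)=\Phi_k(u)\cdot z\cdot\Phi_k(u),\qquad\text{with } z=\Phi_k(\te k(u)\,w\,\be k(u)).
\end{equation*}
The main obstacle is to make sure $z$ is a genuine element of $\Om{A^{k+1}}S$, since $\Phi_k$ takes values in $(\Om{A^{k+1}}S)^1$; the point is that if $\Phi_k(u)\neq 1$ then $u$ has length at least $k+1$, so $\te k(u)$ and $\be k(u)$ are each words of length exactly $k$, which forces $\te k(u)\,w\,\be k(u)$ to have length at least $2k\geq k+1$ and hence $z\neq 1$. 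The case $\Phi_k(u)=1$ is trivial, because the statement then holds vacuously in the monoid $(\Om{A^{k+1}}V)^1$. This establishes the regularity of $[\Phi_k(u)]_{\pv V}$.
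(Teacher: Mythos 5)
Your proof is correct and follows essentially the same route as the paper: reduce to Theorem~\ref{t:basic-phi-k} and apply the multiplicative formula for $\Phi_k$ to extract the $\K$-preorder or regularity statement in $\pv V$. The paper writes out only the $\R$-case and the regularity part, leaving $\L$ and $\J$ as ``similar''; you spell those out and also flag the minor point that the middle factor $z$ is genuinely in $\Om{A^{k+1}}S$ when $\Phi_k(u)\neq 1$, which the paper passes over silently.
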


\begin{proof}
  (\ref{item:basic-phi-k-green-1}):
  We write the proof for $\K=\R$ only, since the other cases are similar.
  If $\pv V\ast\pv D_k\models u\leq_{\R}v$,
  then $\pv V\ast\pv D_k\models u=vx$ for some $x\in(\Om AS)^1$.
  Hence $\pv V\models \Phi_k(u)=\Phi_k(vx)$
  by Theorem~\ref{t:basic-phi-k}.
  As $\Phi_k(vx)=\Phi_k(v)\cdot \Phi_k(\te k(v)x)$, this establishes
  $\pv V\models \Phi_k(u)\leq_{\R}\Phi_k(v)$.
  
  (\ref{item:basic-phi-k-green-2}): If $[u]_{\pv V\ast\pv D_k}$ is
  regular then $\pv V\ast\pv D_k\models u=uxu$, for some $x\in\Om AS$.
  Then
  $\pv V\models\Phi_k(u)=\Phi_k(uxu)$ by Theorem~\ref{t:basic-phi-k}.
  Since $\Phi_k(uxu)=\Phi_k(u)\cdot\Phi_k(\te k(u)x\,\be k(u))\cdot\Phi_k(u)$,
  this establishes (\ref{item:basic-phi-k-green-2}).
\end{proof}

As another consequence of Theorem~\ref{t:basic-phi-k},
we have $c(\Phi_k(u))=c(\Phi_k(v))$
if $\pv {Sl}\ast\pv D_k\models u=v$.
Sometimes we use the
notation $c_{k+1}(u)$ for $c(\Phi_k(u))$. Note that $c_{k+1}(u)$ is
just the set of factors of length $k+1$ of $u$.

\subsection{Left basic factorizations}\label{sec:left-basic-fact}

Left basic factorizations, whose definition we next recall, are
explained in detail and extensively used in~\cite[Section
3]{Trotter&Weil:1997}, which we consider as our supporting reference
for this particular subject.
This tool had already proved to be quite useful
in~\cite{Almeida:1996c,Almeida&Weil:1996b}.

Let $\pv V$ be a pseudovariety of semigroups
containing
$\pv {Sl}$.
Let $u$ be an element of $\Om AV$. A \emph{left basic factorization}
of $u$ is a factorization $u=xay$ such that
$x,y\in (\Om AV)^1$, $a\in A$, $a\notin c(x)$ and $c(u)=c(xa)$.
Every element of $\Om AV$ has a left basic factorization.

Suppose furthermore that $\pv V=\pv K\malcev \pv V$.
Then the left basic factorization of $u\in\Om AV$ is unique: if
$u=xay$ and $u=zbt$ are left basic factorizations of $u$
(where $x,y,z,t\in (\Om AV)^1$ and $a,b\in A$)
then $x=z$, $a=b$ and $y=t$. This uniqueness property
is stated in~\cite[Proposition 3.1]{Trotter&Weil:1997}. The proof
is done in~\cite[Proposition 2.3.1]{Almeida&Weil:1996b}
for a special class of pseudovarieties of the
form $\pv V=\pv K\malcev\pv V$, but the proof holds for all
pseudovarieties of that form containing $\pv {Sl}$. The technique used in the proof goes
back to~\cite[Proposition 3.4]{Almeida:1996c}.

For $u\in \Om AV$ consider the following recursive definition:
  \begin{enumerate}
  \item   the left basic factorization of
  $u$ is $u=u_1a_1r_1$;
  \item if $c(r_n)=c(u)$, then $u_{n+1}$, $a_{n+1}$
    and $r_{n+1}$ are such that
    the left basic factorization of
  $r_{n}$ is $r_{n}=u_{n+1}a_{n+1}r_{n+1}$;
  \item if $c(r_n)\subsetneq c(u)$ then the recursive definition stops.
  \end{enumerate}

If this recursive definition stops after a finite number of steps,
that is, if $c(r_n)\subsetneq c(u)$ for some $n\geq 1$,
then $u=u_1a_1u_2a_2\cdots u_{n-1}a_{n-1}u_na_nr_n$
is the~\emph{iterated left basic factorization} of $u$.
Moreover, one says that $u$ has a \emph{finite} iterated left basic
factorization of \emph{length} $\ell=n$ and \emph{remainder} $r=r_n$.
If the recursive definition does not stop
after a finite number of steps then
$u$ has an \emph{infinite iterated left basic
factorization}. We then write $\ell=\infty$.
We also define~$r=1$ if $\ell=\infty$.

The following notation encompasses both cases
$\ell\in\mathbb {N}$ and $\ell=\infty$, where
$\pv {ilbf}(u)$ stands for \emph{iterated left basic
factorization of $u$}:
\begin{equation*}
  \pv {ilbf}(u)=((u_i,a_i)_{1\leq i<\ell+1};r).
\end{equation*}
From the uniqueness of left basic factorizations
one sees
that $\pv {ilbf}(u)$ is well defined.

\begin{remark}\label{r:projecting-lbf}
  Let $\pv V$ be a pseudovariety of semigroups such that
$\pv {Sl}\subseteq \pv V$
and $\pv V=\pv K\malcev \pv V$.
For every $u\in\Om AS$,
\begin{equation*}
  \pv {ilbf}(u)=((u_i,a_i)_{1\leq i<\ell+1};r)
  \implies
  \pv {ilbf}([u]_{\pv V})=(([u_i]_{\pv V},a_i)_{1\leq i<\ell+1};[r]_{\pv V}),
\end{equation*}
where $[1]_{\pv V}=1$.
\end{remark}

\begin{proof}
The result follows from the fact
that $c([w]_{\pv V})=c(w)$ for every $w\in\Om AS$, and
from the uniqueness of iterated left basic factorizations.
\end{proof}

The following proposition expresses
the significance of a pseudoword having infinite
iterated left basic factorization.

\begin{prop}[{\cite[Proposition 3.3]{Trotter&Weil:1997}}]\label{p:ilbf-v-in-ds}
  Let $\pv V$ be a pseudovariety of
  semigroups such that
  $\pv {Sl}\subseteq \pv V\subseteq \pv {DS}$ and $\pv V=\pv K\malcev \pv V$.
    Let $x\in\Om AV$. Then $x$ is regular if and only if $x$ has an
    infinite iterated 
      left basic factorization.
\end{prop}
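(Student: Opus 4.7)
The plan is to base both directions on the uniqueness of left basic factorizations in $\Om AV$, which holds under the hypotheses $\pv{Sl}\subseteq\pv V$ and $\pv V=\pv K\malcev\pv V$ as recalled in Section~\ref{sec:left-basic-fact}. The forward direction will follow by a clean induction; the converse will require a profinite local-to-global argument combined with the structural properties of~$\pv{DS}$.

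For the direction $(\Rightarrow)$, I would assume $x=xyx$ for some $y\in\Om AV$ (so $c(y)\subseteq c(x)$), let $x=u_1a_1r_1$ be the LBF of~$x$, and show by induction that each remainder $r_n$ is regular with $c(r_n)=c(x)$. For the base case, I would observe that
\begin{equation*}
  xyx = u_1\cdot a_1\cdot (r_1yu_1a_1r_1)
\end{equation*}
also satisfies the defining conditions of an LBF of $x=xyx$: indeed $a_1\notin c(u_1)$ and $c(u_1a_1)=c(x)=c(xyx)$. Uniqueness of LBF then forces this factorization to coincide with $u_1a_1r_1$, yielding
\begin{equation*}
  r_1 = r_1\cdot(yu_1a_1)\cdot r_1.
\end{equation*}
This identity exhibits $r_1$ as a regular element and simultaneously displays $u_1a_1r_1=x$ as a right factor of $r_1$, forcing $c(r_1)\supseteq c(x)$, hence $c(r_1)=c(x)$. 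The induction step applies the same argument, with $r_n$ in place of $x$ and the witness $yu_1a_1\cdots u_na_n$ in place of $y$.

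For the direction $(\Leftarrow)$, assume $x$ has infinite iterated LBF, so $x=v_nr_n$ with $v_n=u_1a_1\cdots u_na_n$ and $c(r_n)=c(x)$ for every $n$. I would show that $\varphi(x)$ is regular in $S$ for every continuous homomorphism $\varphi\colon\Om AV\to S$ onto a finite semigroup~$S$; this suffices, because in a profinite semigroup an element is regular provided its image is regular in every finite continuous quotient, by a standard compactness argument on the nonempty closed sets $\{s\in\Om AV:\varphi(xsx)=\varphi(x)\}$. Given such a~$\varphi$, the sequence $(\varphi(r_n))_n$ takes only finitely many values, so there exist $m<n$ with $\varphi(r_m)=\varphi(r_n)$; writing $r_m=w_{m,n}r_n$ with $w_{m,n}=u_{m+1}a_{m+1}\cdots u_na_n$, a suitable power of $\varphi(w_{m,n})$ is an idempotent $e\in S$ that left-fixes $\varphi(r_n)$, so $\varphi(x)=\varphi(v_m)\,e\,\varphi(r_n)$. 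Combining this with the fact that $c(w_{m,n})=c(x)$ and with the structural property of~$\pv{DS}$ that two regular elements with the same content are $\J$-equivalent, I would extract a regularity witness for~$\varphi(x)$ by producing an element $\J$-related to, and hence (in a regular $\J$-class) containing, the idempotent~$e$.

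The main obstacle lies in the converse: turning the local pigeonhole-and-idempotent construction into an actual regularity witness for~$\varphi(x)$ requires careful tracking of contents along the iterated LBF and a precise use of how $\pv V\subseteq\pv{DS}$ controls the interplay of the $\L$-order, the $\J$-order, and content on regular classes. The forward direction, by contrast, is a one-line consequence of uniqueness of LBF together with the identity $x=xyx$.
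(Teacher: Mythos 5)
The paper does not give a proof of this proposition; it cites \cite[Proposition 3.3]{Trotter&Weil:1997}, so there is no internal argument to compare against, and your attempt must be judged on its own.

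Your forward direction is correct and clean. Observing that $u_1\cdot a_1\cdot(r_1 y u_1 a_1 r_1)$ is again a left basic factorization of $xyx=x$, so that uniqueness of LBF (available because $\pv V=\pv K\malcev\pv V$ and $\pv{Sl}\subseteq\pv V$) forces $r_1=r_1(yu_1a_1)r_1$, is exactly the right move; the content computation ($x$ is a suffix of $r_1$, giving $c(r_1)\supseteq c(x)$, while $r_1$ is a factor of $x$, giving the reverse) and the induction with witness $yu_1a_1\cdots u_na_n$ go through without trouble.

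The converse is where the plan is incomplete, as you yourself acknowledge. The reduction to finite continuous quotients via compactness on the sets $\{s:\varphi(xsx)=\varphi(x)\}$ is sound, and the pigeonhole does yield an idempotent $e=\varphi(w_{m,n})^\omega$ with $e\,\varphi(r_n)=\varphi(r_n)$, hence $\varphi(x)=\varphi(v_m)\,e\,\varphi(r_n)$ and so $\varphi(x)\leq_{\J}e$. But the statement that ``two regular elements with the same content are $\J$-equivalent'' is a theorem about pseudowords of $\Om A{DS}$ (Theorem~\ref{t:regularity-1-content}), not about elements of the abstract finite semigroup $S$; there is no notion of content on $S$ and the theorem cannot be applied to $\varphi(x),e\in S$. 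The missing inequality $e\leq_{\J}\varphi(x)$ is not established, and this is a genuine gap. It is, however, fillable along the lines you gesture at: since $x^\omega$ and $w_{m,n}^\omega$ are both idempotents of $\Om AV$ with content $c(x)$, Theorem~\ref{t:regularity-1-content} (applied in $\Om A{DS}$ to lifts, then projected to $\Om AV$ and pushed through $\varphi$) gives $\varphi(x)^\omega\mathrel{\J}e$ in $S$; combined with the chain $\varphi(x)^\omega\leq_{\J}\varphi(x)\leq_{\J}e$ this sandwiches $\varphi(x)$ into the $\J$-class of the idempotent $\varphi(x)^\omega$, so $\varphi(x)$ is regular. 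Alternatively one can avoid finite quotients altogether: a subnet argument on the prefixes $v_n$ produces $v$ with $x\leq_{\R}v$ and $v=vz$ for an accumulation point $z$ of the $w_{m,n}$, hence $v=vz^\omega$ with $c(z^\omega)=c(x)$ and $x\leq_{\J}z^\omega$; then Corollary~\ref{c:1-8-i} applied in $\Om A{DS}$ gives $[x]_{\pv{DS}}$ regular, whence $[x]_{\pv V}$ regular as a homomorphic image.
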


Let $u$ be an element of $\Om AS$ with length greater than $k$.
Suppose
that
\begin{equation*}
  \pv {ilbf}(\Phi_k(u))=((w_i,z_i)_{1\leq i<\ell+1};r).  
\end{equation*}

For each integer $i$, with $1\le i<\ell+1$, the pseudoword $w_i$ is a factor
of $\Phi_k(u)$ so, by~\cite[Proposition 10.9.2]{Almeida:1994a}, there
is a factor $u_i$ of $u$ such that $\Phi_k(u_i)=w_i$,
and $u_i$ is unique if $w_i\neq 1$ by an application of
Theorem~\ref{t:basic-phi-k} to the pseudovariety $\pv S$.
Also, $\Phi_k(q)=r$ for some $q$, which is unique if
$r\neq 1$.

Suppose that $w_i\neq 1$. Since
$w_iz_i$ and $z_{i-1}w_i$ (the latter being defined only if $i\neq 1$)
are factors of $\Phi_k(u)$, we have $\be k(z_i)=\te k(u_i)$ and
$\te k(z_{i-1})=\be k(u_i)$ (otherwise $\Phi_k(u)$ would have factors
of length $2$ which are not in $\Phi_k(A^+)$).
Therefore, if $k=1$ and $w_i,w_{i+1}\neq 1$,
then $z_i=\te 1(u_i)\,\be 1(u_{i+1})$ when $1\leq i<\ell$.
The case $k=1$ will be sufficient for the proof of
Theorem~\ref{t:more-K-malc-ast-d}
and it is easier to handle with
because in the case $k>1$ we do not have
$z_{i}=\te k(u_i)\be k(u_{i+1})$.

In the following lines we are assuming $k=1$.
If $\ell\in \mathbb N$ and $r=1$, then
we choose $q$ to be the letter $\te 1(u)$.
If some $w_i$ is $1$ then all $w_i$ and $r$
are $1$, and $c(\Phi_1(u))$ has only one element, say $ab$, with
$a,b\in A$.
Then either $u=ab$ or $a=b$.
In the former case, we have $\ell=1$ and $z_1=ab$, and we choose
$u_1$ to be $a$; in the latter case
we have $z_i=aa$ for all $i$, and we choose $u_i$ to be $a$.

So, in every possible case, we have
$z_{i}=\te 1(u_i)\be 1(u_{i+1})$
when $1\leq i<\ell$
and, if $\ell\in\mathbb N$, then $z_{\ell}=\te 1(u_\ell)\be 1(q)$.

If $\ell\in\mathbb {N}$ then
\begin{equation*}
\Phi_1(u_1u_2\cdots u_\ell q)=\Phi_1(u_1)z_1\Phi_1(u_2)z_2\Phi_1(u_3)\cdots
\Phi_1(u_\ell)z_\ell\Phi_1(q)=\Phi_1(u).
\end{equation*}
Since, for $w=u_1u_2\cdots u_\ell q$, we also have $\be 1 (w)=\be 1(u)$
and $\te 1 (w)=\te 1(u)$,
it follows from Theorem~\ref{t:basic-phi-k}, applied to the
pseudovariety $\pv S$, that $u=w$.

More generally, whether $\ell\in\mathbb {N}$ or $\ell=\infty$,
the equality 
\begin{equation*}
\Phi_1(u_1u_2\cdots u_i)=\Phi_1(u_1)z_1\Phi_1(u_2)z_2\Phi_1(u_3)\cdots
\Phi_1(u_{i-1})z_{i-1}\Phi_1(u_{i})
\end{equation*}
holds for all $i$ such that $1\leq i<\ell+1$,
and $u_1u_2\cdots u_i$ is a prefix of $u$.

We define
\begin{equation*}
  \pv {ilfb}_2(u)=((u_i)_{1\leq i<\ell +1};q),
\end{equation*}
where $q=1$ if $\ell=\infty$.

\begin{lema}\label{l:useful-for-the-induction}
    Let $\pv V$ be a pseudovariety of
  semigroups such that $\pv {Sl}\subseteq \pv V$ and
  $\pv V=\pv K\malcev \pv V$. Let $u$ and $v$ be elements of
  $\Om AS\setminus A$ such that
  \begin{equation*}
  \pv {ilfb}_2(u)=((u_i)_{1\leq i<\ell_u +1};q_u),\quad
  \pv {ilfb}_2(v)=((v_i)_{1\leq i<\ell_v +1};q_v).
\end{equation*}
If $\pv V\ast\pv D_1\models u=v$
then $\ell_u=\ell_v$,
$\pv V\ast\pv D_1\models u_i=v_i$ for all $i$ such that
$1\leq i<\ell_u+1$, and
$\pv V\ast\pv D_1\models q_u=q_v$.
\end{lema}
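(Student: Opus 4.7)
The plan is to apply Theorem~\ref{t:basic-phi-k} twice: once to extract from the hypothesis the pieces of information we need at the level of $\Phi_1$ and the initial/terminal letters, and once to reassemble them into pseudoidentities between the components of $\pv{ilfb}_2(u)$ and $\pv{ilfb}_2(v)$.

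First, by Theorem~\ref{t:basic-phi-k} (applicable because $\pv{Sl}\subseteq \pv V$, so $\pv V$ contains a nontrivial monoid), the hypothesis $\pv V\ast\pv D_1\models u=v$ yields the three equalities $\be 1(u)=\be 1(v)$, $\te 1(u)=\te 1(v)$ and $\pv V\models \Phi_1(u)=\Phi_1(v)$. Since $\pv{Sl}\subseteq \pv V=\pv K\malcev\pv V$, left basic factorizations in $\Om AV$ are unique, and therefore so are iterated left basic factorizations. By Remark~\ref{r:projecting-lbf}, the ilbf's of $[\Phi_1(u)]_{\pv V}$ and $[\Phi_1(v)]_{\pv V}$ are obtained by componentwise projection of the ilbf's of $\Phi_1(u)$ and of $\Phi_1(v)$. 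From $\pv V\models \Phi_1(u)=\Phi_1(v)$, these projected ilbf's coincide, which produces $\ell_u=\ell_v$ (call it $\ell$), the equalities $z_i^u=z_i^v$ for $1\le i<\ell+1$, the pseudoidentities $\pv V\models \Phi_1(u_i)=\Phi_1(v_i)$ for each such $i$, and $\pv V\models \Phi_1(q_u)=\Phi_1(q_v)$. A small observation is needed here: the canonical projection $(\Om AS)^1\to (\Om AV)^1$ sends a pseudoword to the adjoined identity only if the pseudoword is the adjoined identity, so $w_i^u=1$ exactly when $w_i^v=1$; this guarantees that the conventional choices of $u_i$ or $q$ made in Section~\ref{sec:left-basic-fact} in the degenerate case $|c(\Phi_1(u))|=1$ align on both sides.

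Next, I would invoke the relations between the letters $z_i$ and the boundary letters of the $u_i$'s and of $q$ that were recalled immediately before the statement of the lemma, namely $z_i=\te 1(u_i)\,\be 1(u_{i+1})$ for $1\le i<\ell$, $z_\ell=\te 1(u_\ell)\,\be 1(q)$ when $\ell\in\mathbb N$, $\be 1(u_1)=\be 1(u)$, and $\te 1(q)=\te 1(u)$ (with the analogous identities for $v$). Combining $z_i^u=z_i^v$ with $\be 1(u)=\be 1(v)$ and $\te 1(u)=\te 1(v)$ propagates to $\be 1(u_i)=\be 1(v_i)$ and $\te 1(u_i)=\te 1(v_i)$ for all valid $i$, and likewise $\be 1(q_u)=\be 1(q_v)$ and $\te 1(q_u)=\te 1(q_v)$. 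Finally, Theorem~\ref{t:basic-phi-k} applied in the converse direction to each pair $(u_i,v_i)$ and to $(q_u,q_v)$ yields $\pv V\ast\pv D_1\models u_i=v_i$ and $\pv V\ast\pv D_1\models q_u=q_v$.

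The main obstacle I anticipate is the careful treatment of the degenerate cases: when some $w_i$ equals $1$ and $u_i$ is a letter chosen by convention, when $\ell=\infty$ and $q_u=q_v=1$ so that the claim about $q$ reduces to the trivial pseudoidentity $1=1$, and when $u$ and $v$ are pure powers of a single letter (forcing all $u_i$ and $v_i$ to be that letter). These cases must be verified against the conventions laid out in Section~\ref{sec:left-basic-fact}, but they introduce no genuinely new conceptual difficulty once one observes, as above, that the conventions used for $u$ and for $v$ automatically coincide.
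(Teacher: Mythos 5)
Your proposal is correct and follows essentially the same route as the paper: apply Theorem~\ref{t:basic-phi-k} to pass to $\Phi_1(u)$, $\Phi_1(v)$ and the boundary letters; use Remark~\ref{r:projecting-lbf} together with uniqueness of iterated left basic factorizations in $\Om {A^2}V$ (valid because $\pv{Sl}\subseteq\pv V=\pv K\malcev\pv V$) to match the ilbf components; propagate the boundary-letter equalities via the identities $z_i=\te 1(u_i)\,\be 1(u_{i+1})$, $\be 1(u_1)=\be 1(u)$, $\te 1(q_u)=\te 1(u)$; and apply Theorem~\ref{t:basic-phi-k} again in the converse direction. The extra care you devote to the degenerate conventions of Section~\ref{sec:left-basic-fact} (the case where some $w_i=1$, and the observation that the canonical projection preserves the distinction between $1$ and non-identity elements, so the conventions for $u$ and $v$ align) is not spelled out in the paper's proof but is a sound and welcome clarification rather than a different argument.
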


\begin{proof}
  If $\pv V\ast\pv D_1\models u=v$ then
  $\pv V\models \Phi_1(u)=\Phi_1(v)$,
    $\be 1(u)=\be 1(v)$ and $\te 1(u)=\te 1(v)$
    by Theorem~\ref{t:basic-phi-k}.
  By the definition of $\pv {ilfb}_2$, the iterated left basic
  factorizations of $\Phi_1(u)$ and $\Phi_1(v)$ are of the following
  form,
  \begin{align*}
  \pv {ilbf}(\Phi_1(u))&=((\Phi_1(u_i),z_{u,i})_{1\leq i<\ell_u+1};q_u),\\
  \pv {ilbf}(\Phi_1(v))&=((\Phi_1(v_i),z_{v,i})_{1\leq i<\ell_v+1};q_v),
\end{align*}
with $z_{u,i}=\te 1(u_i)\be 1(u_{i+1})$
and $z_{v,i}=\te 1(v_i)\be 1(v_{i+1})$,
where $u_{\ell_u+1}=q_u$
and $v_{\ell_v+1}=q_v$.
  
  Since $\pv V=\pv K\malcev \pv V$,
  the elements of $\Om {A^2}V$
  have unique left basic factorizations. Therefore, taking into
  account  Remark~\ref{r:projecting-lbf}, we conclude that
  $\ell_u$ and $\ell_v$ are equal, say to $\ell$, and
  \begin{align}
    z_{u,i}&=z_{v,i},\label{al:z}\\
    \pv V&\models \Phi_1(u_i)=\Phi_1(v_i),\quad \pv V\models
    \Phi_1(q_u)=\Phi_1(q_v),\label{al:phi} 
  \end{align}
  for all $i$ such that $1\leq i<\ell+1$.

  Since
  $\be 1(u)=\be 1(v)$ and $\te 1(u)=\te 1(v)$,
  we have 
  $\be 1(u_1)=\be 1(v_1)$
  and $\te 1(q_u)=\te 1(q_v)$.
  By~(\ref{al:z})
  we also know that $\be 1(u_{i+1})=\be 1(v_{i+1})$
  and $\te 1(u_i)=\te 1(v_i)$ for every integer $i$
  such that $1\leq i<\ell+1$
  (in particular, $\be 1(q_u)=\be 1(q_v)$).
  Then, from~(\ref{al:phi}) and Theorem~\ref{t:basic-phi-k},
  we obtain $\pv V\ast\pv D_1\models u_i=v_i$
  for every integer $i$
  such that $1\leq i<\ell+1$,
  and $\pv V\ast\pv D_1\models q_u=q_v$.
\end{proof}

\subsection{From $\pv {DS}$ to $\pv {DS}\ast\pv D_k$}\label{sec:from-pv-ds}

In this subsection we deduce some properties of implicit operations
on $\pv {DS}\ast\pv D_k$, for $k\ge 1$.
In fact, we only need the case $k=1$
for proving Theorem~\ref{t:more-K-malc-ast-d}, but the general case
is no more difficult.

The following theorem and corollary state known properties of implicit
operations on $\pv {DS}$ from which we
obtain similar properties
of implicit operations on $\pv {DS}\ast\pv D_k$,
expressed in Theorem~\ref{t:adapt-1-8-i}.

\begin{teor}[{\cite[Theorem
    8.1.7]{Almeida:1994a}}]\label{t:regularity-1-content} 
  Let $u,v\in \Om AS$ be
  such that
  $[u]_{\pv {DS}}$ and $[v]_{\pv {DS}}$ are regular.
  Then
  $[u]_{\pv {DS}}\mathrel{\J}[v]_{\pv {DS}}$
  if and only if
  $c(u)=c(v)$.
\end{teor}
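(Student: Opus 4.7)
The plan is to prove the two directions separately, with the forward direction being essentially formal and the backward direction being the substantive one.

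For the forward direction ($[u]_{\pv {DS}} \mathrel{\J} [v]_{\pv {DS}}$ implies $c(u) = c(v)$), I would exploit the inclusion $\pv{Sl} \subseteq \pv{DS}$, which yields a continuous surjective homomorphism $\pi : \Om A{DS} \to \Om A{Sl}$ factoring the content map as $c = \pi \circ [\cdot]_{\pv{DS}}$. Since $\pi$ is a semigroup homomorphism, it preserves $\leq_{\J}$, so $\J$-equivalent elements map to $\J$-equivalent elements. But $\Om A{Sl}$ is a semilattice, where $\J$-equivalence collapses to equality. Thus $c(u) = c(v)$.

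For the backward direction, assume $c(u) = c(v) = B$ and both classes regular. My first step would be to reduce to the case of idempotents: because $\Om A{DS}$ is a profinite (in particular compact) semigroup, every regular element is $\J$-equivalent to its idempotent power $u^\omega$, and this idempotent power still has content $B$ (the content map factors through $\pv{DS}$). So it suffices to show that any two idempotents $e,f$ in $\Om A{DS}$ with $c(e) = c(f) = B$ are $\J$-equivalent.

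To do this, I would introduce a canonical idempotent for each content. Writing $B = \{a_1,\dots,a_n\}$, set $e_B = [(a_1 a_2 \cdots a_n)^\omega]_{\pv{DS}}$; this is a regular idempotent of content $B$. The goal is then to show any idempotent $e$ with $c(e) = B$ satisfies $e \mathrel{\J} e_B$, which by symmetry would yield the result. The critical tool is the defining property of $\pv{DS}$ (equivalently, its characterization by the pseudoidentity $((xy)^\omega(yx)^\omega(xy)^\omega)^\omega = (xy)^\omega$): regular $\D$-classes are subsemigroups. Concretely, in any $S \in \pv{DS}$, if $s,t$ lie in a common regular $\D$-class then so does $st$. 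From this one extracts an ``absorption'' statement: if $e$ and $f$ are idempotents in $S \in \pv{DS}$ with every letter visible in $e$ also visible in $f$ and conversely, then $e f e \mathrel{\J} e$ and $f e f \mathrel{\J} f$, so $e \mathrel{\J} f$. Transferring this to $\Om A{DS}$ via the density of $A^+$ and continuity of multiplication would complete the proof.

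The main obstacle will be the absorption step: establishing that for $\pv{DS}$-idempotents $e,f$ of equal content, $efe$ is $\J$-equivalent to $e$ (not merely below it). The inequality $e \geq_{\J} efe$ is trivial; the reverse requires producing $x,y$ with $e = x \cdot efe \cdot y$, which is where the specific structure of $\pv{DS}$ enters decisively. I would address this by working in an arbitrary finite $S \in \pv{DS}$, writing $f$ as a product $f = s_1 s_2 \cdots s_m$ of generators each of which occurs in any expression of $e$, and iteratively applying the fact that the regular $\D$-class of $e$ is closed under multiplication by such generators (which follows from $\pv{DS}$ together with $c(s_i) \subseteq c(e)$). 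A profinite limit argument then transfers the conclusion back to $\Om A{DS}$.
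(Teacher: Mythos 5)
The forward direction and the reduction to idempotents are both fine. The difficulty is concentrated, as you recognize, in the ``absorption'' step, and it is there that your sketch has a genuine gap.

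You propose to show, for idempotents $e,f$ of $\Om A{DS}$ with $c(e)=c(f)$, that $efe\mathrel\J e$, and to derive this from ``the fact that the regular $\D$-class of $e$ is closed under multiplication by generators $s_i$ with $c(s_i)\subseteq c(e)$, which follows from $\pv{DS}$.'' This inference is not valid. The defining property of $\pv{DS}$ is that a regular $\D$-class $D$ is closed under products of elements \emph{of $D$ itself}: if $s,t\in D$ then $st\in D$. It says nothing about multiplying an element of $D$ by an element of strictly smaller content, and a single letter $a\in c(e)$ generally sits in a $\J$-class strictly $\J$-above that of $e$, so the $\pv{DS}$ hypothesis simply does not apply to the pair $(e,a)$. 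In fact the statement ``$ea\mathrel\J e$ whenever $a\in c(e)$'' is, modulo the reduction you already performed, essentially equivalent to the theorem being proved: it asserts exactly that multiplying a regular element by a letter already in its content does not drop it to a lower regular $\J$-class. Citing it as a consequence of the $\pv{DS}$ axiom is circular.

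A second, more technical problem is the passage ``working in an arbitrary finite $S\in\pv{DS}$, writing $f=s_1\cdots s_m$ of generators each of which occurs in any expression of $e$.'' Content is a well-defined (continuous, homomorphic) invariant on $\Om A{DS}$, but it is \emph{not} well defined on an arbitrary finite $A$-generated $S$: distinct words with different contents can have the same image in $S$, so ``the set of letters visible in $e$'' has no canonical meaning once you project. Any argument by finite quotients has to be phrased purely in terms of inequalities $\phi(e)\leq_\J\phi(f)$ for pseudowords $e,f$ of known content, not in terms of a content of $\phi(e)$.

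What is actually needed (and what the cited proof in Almeida's book supplies) is structural information about $\Om A{DS}$ beyond the bare pseudoidentity for $\pv{DS}$: one shows that the clopen subsemigroup $T_B=c^{-1}(B)$ has a unique regular $\J$-class, namely its minimal ideal, and that every idempotent of content $B$ must lie in it. This uses the analysis of $\Om A{DS}$ developed earlier in Chapter~8 of that book (in particular the representation of its regular $\J$-classes), and is precisely the machinery your sketch replaces by an appeal to ``$\pv{DS}$ plus content'' that does not hold up. So the plan identifies the right lemma but does not supply a correct proof of it.
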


\begin{cor}\label{c:1-8-i}
  Let $u,v\in \Om AS$
  be such that $c(u)=c(v)$.
  Let $\K$ be one of the Green's relations
  $\J$, $\R$ or $\L$. 
  Suppose that
  $[u]_{\pv {DS}}\leq_{\K}[v]_{\pv {DS}}$
  and that $[v]_{\pv {DS}}$ is regular. Then
  $[u]_{\pv {DS}}\mathrel{\K}[v]_{\pv {DS}}$.
\end{cor}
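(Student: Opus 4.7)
The plan is to prove the $\J$ case first and then deduce the $\R$ and $\L$ cases by stability of the profinite semigroup $\Om A{DS}$. For the $\J$ case I will produce a regular pseudoword that is simultaneously $\J$-below $v$ and $\J$-below $u$, and then let Theorem~\ref{t:regularity-1-content} collapse the $\J$-chain.

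For the $\J$ case, suppose $[u]_{\pv{DS}}\leq_{\J}[v]_{\pv{DS}}$ and consider the $\omega$-power $e=[(uv)^\omega]_{\pv{DS}}$. Being idempotent, $e$ is regular, and its content is $c(u)\cup c(v)=c(v)$ by hypothesis. By Theorem~\ref{t:regularity-1-content}, $e\mathrel{\J}[v]_{\pv{DS}}$. On the other hand, every finite power $(uv)^n$ lies in the principal two-sided ideal generated by $[u]_{\pv{DS}}$; since that ideal is closed in the compact semigroup $\Om A{DS}$, the limit $e$ satisfies $e\leq_{\J}[u]_{\pv{DS}}$ as well. Chaining
\begin{equation*}
[v]_{\pv{DS}}\mathrel{\J} e\leq_{\J}[u]_{\pv{DS}}\leq_{\J}[v]_{\pv{DS}}
\end{equation*}
forces $[u]_{\pv{DS}}\mathrel{\J}[v]_{\pv{DS}}$.

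For the $\R$ case, $[u]_{\pv{DS}}\leq_{\R}[v]_{\pv{DS}}$ trivially yields $[u]_{\pv{DS}}\leq_{\J}[v]_{\pv{DS}}$, so the $\J$ case already gives $[u]_{\pv{DS}}\mathrel{\J}[v]_{\pv{DS}}$; stability of the compact semigroup $\Om A{DS}$ then upgrades this to $[u]_{\pv{DS}}\mathrel{\R}[v]_{\pv{DS}}$. The $\L$ case is dual. No step is a serious obstacle; the only delicate point is the passage from $[u]_{\pv{DS}}\leq_{\J}(uv)^n$ for all finite $n$ to $[u]_{\pv{DS}}\leq_{\J}e$, which rests on the closedness of principal ideals in profinite semigroups.
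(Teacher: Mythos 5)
Your proof is correct and follows essentially the same route as the paper: produce a regular pseudoword of content $c(v)$ lying $\J$-below both $u$ and $v$, invoke Theorem~\ref{t:regularity-1-content} to collapse the $\J$-chain, and then use stability of $\Om A{DS}$ to pass from $\J$-equivalence to $\K$-equivalence. The only difference is the choice of intermediate element: the paper takes $[u^\omega]_{\pv{DS}}$, for which $[u^\omega]_{\pv{DS}}\leq_{\K}[u]_{\pv{DS}}$ is immediate for every $\K\in\{\J,\R,\L\}$, whereas you take $[(uv)^\omega]_{\pv{DS}}$ and invoke closedness of principal ideals to justify $e\leq_\J[u]_{\pv{DS}}$; the latter works but is heavier than needed, since $(uv)^\omega\leq_\J uv\leq_\J u$ already follows from $(uv)^\omega=(uv)^{\omega-1}\cdot uv$ without any topological input.
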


\begin{proof}
  If $[u]_{\pv {DS}}\leq_{\K}[v]_{\pv {DS}}$
  then $[u^\omega]_{\pv {DS}}\leq_{\K}[u]_{\pv {DS}}\leq_{\K}[v]_{\pv
    {DS}}$.
  Since $[u^\omega]_{\pv {DS}}$ is regular and
  $c(u^\omega)=c(u)=c(v)$,
  it follows immediately from Theorem~\ref{t:regularity-1-content}
  that
  $[u]_{\pv {DS}}\mathrel{\J}[v]_{\pv {DS}}$,
  thus $[u]_{\pv {DS}}\mathrel{\K}[v]_{\pv {DS}}$
  by stability of $\Om A{DS}$.
\end{proof}

  \begin{lema}\label{l:regularity-up-and-down}
    Let $u$ be an element of $\Om AS$
    with a factorization of
    the form $u=pxqypzq$, with $p$, $q$ elements of $A^+$ with length
    greater than or equal to~$k$.
    If $[\Phi_k(u)]_{\pv {DS}}$ is regular then
    $[u]_{\pv {DS}\ast\pv D_k}$ is regular.
  \end{lema}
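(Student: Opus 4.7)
My plan is to use Theorem~\ref{t:basic-phi-k} to reduce the problem to an equation in $\pv{DS}$ involving the map $\Phi_k$. Since $|p|,|q|\geq k$, the boundary conditions $\be k(uwu)=\be k(u)$ and $\te k(uwu)=\te k(u)$ are automatic for any $w\in(\Om AS)^1$, so by Theorem~\ref{t:basic-phi-k} and the standard recursion $\Phi_k(uwu)=\Phi_k(u)\cdot\Phi_k(\te k(u)\, w\, \be k(u))\cdot\Phi_k(u)$, it suffices to find $w$ such that, writing $\tau=\te k(u)=\te k(q)$ and $\beta=\be k(u)=\be k(p)$,
\[
\Phi_k(u)\cdot\Phi_k(\tau w\beta)\cdot\Phi_k(u)=\Phi_k(u)\quad\text{in }\Om{A^{k+1}}{DS}.
\]
By regularity of $[\Phi_k(u)]_{\pv{DS}}$, a ``middle inverse'' of $\Phi_k(u)$ exists in $\Om{A^{k+1}}{DS}$; the whole game is thus to realise one in the image of the map $w\mapsto\Phi_k(\tau w\beta)$, which is only a proper subset of $\Om{A^{k+1}}{DS}$.

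For this I would exploit the combinatorial redundancy of the factorisation $u=pxqypzq$. The subword $qyp$ is a factor of $u$ with $\be k(qyp)=\tau$ and $\te k(qyp)=\beta$; moreover the cyclic patterns $pxqy$ and $pzqy$ describe loops among the length-$(k+1)$ factors actually occurring in $u$, avoiding the spurious $q$-to-$p$ bridging windows that a naive choice like $w=u^{\omega-1}$ would introduce. Taking $w=y\cdot(pxqy)^\omega$ (or a similar $\omega$-power built out of these ``good'' cycles) guarantees that every length-$(k+1)$ factor of $\tau w\beta$ is already a length-$(k+1)$ factor of $u$, whence $c(\Phi_k(\tau w\beta))\subseteq c(\Phi_k(u))$ and $c(\Phi_k(uwu))=c(\Phi_k(u))$. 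Since $\Phi_k(uwu)$ lies $\leq_{\R}$ and $\leq_{\L}$ of $\Phi_k(u)$ in $\pv{DS}$ with matching content, Theorem~\ref{t:regularity-1-content} together with Corollary~\ref{c:1-8-i} applied for $\K=\R$ and $\K=\L$ yields $\Phi_k(uwu)\mathrel{\H}\Phi_k(u)$ in $\Om{A^{k+1}}{DS}$.

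The main obstacle will be upgrading this $\H$-equivalence to outright equality. I would overcome it by refining $w$ with a further $\omega$-power insertion so that $\Phi_k(\tau w\beta)\cdot\Phi_k(u)$ becomes an idempotent; any such idempotent right-identity $e$ of $\Phi_k(u)$ then satisfies $\Phi_k(u)\cdot e=\Phi_k(u)$, which gives the desired equality. Arranging this idempotent to lie in the image of $w\mapsto\Phi_k(\tau w\beta)$ for a genuine $w\in(\Om AS)^1$ is precisely where the double occurrence of each of $p$ and $q$ in $u=pxqypzq$ is used: it supplies just enough cyclic flexibility in the ``good'' length-$(k+1)$ transition graph of $u$ to realise the required $\H$-class representative through an explicit pseudoword.
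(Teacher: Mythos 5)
Your overall strategy is the right one, and you correctly identify the key combinatorial point: because $qyp$ is already a factor of $u$, insertions of the form $u\mapsto u\,(\text{something beginning and ending near }y)\,u$ create no new length-$(k+1)$ windows, so $c_{k+1}$ is preserved and Corollary~\ref{c:1-8-i} can be invoked after Theorem~\ref{t:regularity-1-content}. Up to the $\H$-equivalence $[\Phi_k(uwu)]_{\pv{DS}}\mathrel{\H}[\Phi_k(u)]_{\pv{DS}}$ your argument is sound (and your worry about the ``spurious $q$-to-$p$ bridging windows'' of $u^{\omega+1}$ is exactly right).

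The gap is precisely where you flag it: $\H$-equivalence in $\Om{A^{k+1}}{DS}$ is strictly weaker than equality, and your proposed remedy --- ``refine $w$ with a further $\omega$-power insertion so that $\Phi_k(\tau w\beta)\cdot\Phi_k(u)$ becomes an idempotent'' --- is never carried out, nor is it clear how to carry it out in that shape, since the product $\Phi_k(\tau w\beta)\cdot\Phi_k(u)$ is not itself of the form $\Phi_k(\cdot)$ of a single pseudoword when $|p|>k$, and forcing $\Phi_k(\tau w\beta)\Phi_k(u)$ to be idempotent by tweaking only $w$ fights against the fixed middle factor $\Phi_k(u)$. The paper avoids this difficulty by a better-chosen $v$: after reducing to $|p|=|q|=k$, it sets $v=(uy)^{\omega}u=(uy)^{\omega}u(yu)^{\omega}$, so that
\[
\Phi_k(v)=e\cdot\Phi_k(u)\cdot f,\qquad e=\Phi_k(uyp)^{\omega}=\Phi_k((uy)^{\omega}p),\quad f=\Phi_k(qyu)^{\omega}=\Phi_k(q(yu)^{\omega}).
\]
Here $e$ and $f$ are idempotent by construction, have the same content as $\Phi_k(u)$ (again because $qyp$ is a factor of $u$), and satisfy $e\leq_{\R}\Phi_k(u)$, $f\leq_{\L}\Phi_k(u)$. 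Theorem~\ref{t:regularity-1-content} then upgrades these to $e\mathrel{\R}\Phi_k(u)$ and $f\mathrel{\L}\Phi_k(u)$ over $\pv{DS}$, so $e$ is a left identity and $f$ a right identity of $[\Phi_k(u)]_{\pv{DS}}$, giving $[\Phi_k(v)]_{\pv{DS}}=[\Phi_k(u)]_{\pv{DS}}$ outright (no $\H$-to-$=$ upgrade needed). With $\be k(v)=\be k(u)$ and $\te k(v)=\te k(u)$, Theorem~\ref{t:basic-phi-k} gives $[v]_{\pv{DS}\ast\pv D_k}=[u]_{\pv{DS}\ast\pv D_k}$, and $v$ is manifestly regular ($v\mathrel{\J}(uy)^{\omega}$). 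To salvage your write-up you would want to replace $uwu$ by an $\omega$-power construction like $(uy)^{\omega}u$ so that the flanking factors are idempotents \emph{on their own}, rather than trying to make a product involving $\Phi_k(u)$ idempotent.
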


 \begin{proof}
   Clearly, we only need to consider the case where $|p|=|q|=k$.   
   Let $v=(uy)^\omega u$.
      Note that $v\mathrel{\J}(uy)^\omega$, thus $v$ is regular. Therefore,
   it suffices to prove that
   $[v]_{\pv {DS}\ast\pv D_k}=[u]_{\pv {DS}\ast\pv D_k}$.
   
   Consider the idempotents
   $e=\Phi_k(uyp)^\omega$
   and $f=\Phi_k(qyu)^\omega$.
   Note that $e=\Phi_k((uy)^\omega p)$
   and $f=\Phi_k(q(yu)^\omega)$
   because $p=\be k(u)$ and $q=\te k(u)$.
   Moreover, since $v=(uy)^\omega u(yu)^\omega$,
   we have 
   \begin{equation}\label{eq:regularity-up-and-down}
     \Phi_k(v)=e\cdot \Phi_k(u)\cdot f.
   \end{equation}    
   Moreover, the factorization
   $u=pxqypzq$ and the lengths of $p$ and $q$
   assure us that $c_{k+1}(u)=c_{k+1}(uyu)=c_{k+1}(v)$,
   and $c(\Phi_k(v))=c(\Phi_k(u))=c(e)=c(f)$.
   Since $[\Phi_k(u)]_{\pv {DS}}$ is regular,
   by Theorem~\ref{t:regularity-1-content} we know that
   $[\Phi_k(u)]_{\pv {DS}}$,
   $[e]_{\pv {DS}}$ and $[f]_{\pv {DS}}$ belong to the same
   $\J$-class of $\Om {A^{k+1}}{DS}$.
   As $e\leq_{\R}\Phi_k(u)$
   and $f\leq_{\L}\Phi_k(u)$,
   it follows that
   $[e]_{\pv {DS}}\mathrel {\R}[\Phi_k(u)]_{\pv {DS}}$
   and $[f]_{\pv {DS}}\mathrel {\L}[\Phi_k(u)]_{\pv {DS}}$.
   Therefore, from~(\ref{eq:regularity-up-and-down})
   we obtain $[\Phi_k(v)]_{\pv {DS}}=[\Phi_k(u)]_{\pv {DS}}$.
   As $\be k(v)=\be k(u)$ and $\te k(v)=\te k(u)$,
   applying Theorem~\ref{t:basic-phi-k} we
   get $[v]_{\pv {DS}\ast\pv D_k}=[u]_{\pv {DS}\ast\pv D_k}$.   
 \end{proof}

 Since $\pv {DS}=\pv K\malcev\pv {DS}$, the
  elements of $\Om A{DS}$ have unique iterated left basic factorizations, for
  every alphabet~$A$.  

  \begin{teor}\label{t:regularity-up-and-down}
      Let $u$ be an element of $\Om AS$ with length greater than $k$.
      The following conditions are equivalent:
      \begin{enumerate}
      \item the length of the iterated left basic factorization of
      $\Phi_k(u)$ is infinite;\label{item:regularity-up-and-down-1}
      \item $[u]_{\pv {DS}\ast\pv D_k}$ is
      regular;\label{item:regularity-up-and-down-2}
      \item $[\Phi_k(u)]_{\pv {DS}}$ is
        regular.\label{item:regularity-up-and-down-3} 
      \end{enumerate}
 \end{teor}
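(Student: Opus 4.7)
The plan is to prove the cycle (1) $\Leftrightarrow$ (3), (2) $\Rightarrow$ (3), (3) $\Rightarrow$ (2). For (1) $\Leftrightarrow$ (3), since $\pv{Sl} \subseteq \pv{DS}$ and $\pv{DS} = \pv K \malcev \pv{DS}$, Remark~\ref{r:projecting-lbf} shows that the iterated left basic factorization of $\Phi_k(u) \in \Om{A^{k+1}}S$ has the same length as that of $[\Phi_k(u)]_{\pv{DS}} \in \Om{A^{k+1}}{DS}$, and Proposition~\ref{p:ilbf-v-in-ds} applied to $\pv V = \pv{DS}$ equates infinite length with regularity. The implication (2) $\Rightarrow$ (3) follows immediately from Corollary~\ref{c:basic-phi-k}(2) applied to $\pv V = \pv{DS}$.

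For the substantive direction (3) $\Rightarrow$ (2), the plan is to invoke Lemma~\ref{l:regularity-up-and-down} with $p = \be k(u)$ and $q = \te k(u)$, reducing the task to exhibiting a factorization $u = pxqypzq$ in $\Om AS$ with $x,y,z \in (\Om AS)^1$. Preliminarily, since $[\Phi_k(u)]_{\pv{DS}}$ is regular and $\Phi_k(u) \neq 1$ (as $|u|>k$) while no finite nonempty word of $\om{A^{k+1}}{DS}$ is regular, $\Phi_k(u)$ is not a finite word; consequently $u \notin A^+$.

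To produce the factorization, I use infinite iterated left basic factorization (via (1)$\Leftrightarrow$(3)) together with its right-sided analogue: because $\pv{DS}$ is self-dual and, by Proposition~\ref{p:reverse-malcev}, satisfies $\pv{DS} = \pv D \malcev \pv{DS}$, the dual of Proposition~\ref{p:ilbf-v-in-ds} yields that $\Phi_k(u)$ also has infinite iterated right basic factorization. The first fact forces every letter of $c(\Phi_k(u))$ to occur in every tail of $\Phi_k(u)$, so $\be{k+1}(u)$ recurs at arbitrarily late positions and thus $p$ occurs as a length-$k$ factor arbitrarily far to the right in $u$; the dual fact forces every letter to occur in every head, so $\te{k+1}(u)$ recurs arbitrarily far from the end, and $q$ occurs as a length-$k$ factor of $u$ at positions not near the end. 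Approximating $u$ by finite words $u_n \to u$ with $\be k(u_n)=p$ and $\te k(u_n)=q$ for large $n$, one can for $n$ sufficiently large choose positions $i_n, j_n$ with $k < i_n$, $i_n + k \le j_n$, and $j_n + k -1 \le |u_n| - k$ such that $q$ occurs at position $i_n$ and $p$ occurs at position $j_n$, yielding factorizations $u_n = p x_n q y_n p z_n q$. Compactness of $(\Om AS)^1$ lets us pass to convergent subsequences and obtain $u = p x q y p z q$; Lemma~\ref{l:regularity-up-and-down} then provides regularity of $[u]_{\pv{DS}\ast\pv D_k}$.

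The main obstacle lies in translating the two profinite iterated-basic-factorization properties of $\Phi_k(u)$ into uniform positional guarantees in the approximations $u_n$, so that the desired ordering (initial $p$, interior $q$, later interior $p$, final $q$) can be simultaneously enforced for all sufficiently large $n$.
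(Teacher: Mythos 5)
Your proofs of (1)$\Leftrightarrow$(3) and (2)$\Rightarrow$(3) coincide with the paper's, and the reduction of (3)$\Rightarrow$(2) to Lemma~\ref{l:regularity-up-and-down} by producing a factorization $u=pxqypzq$ is also the paper's strategy. The gap is exactly where you flag it: you never actually produce the factorization, and the route you sketch for doing so is both more elaborate than needed and genuinely incomplete.

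First, ``every letter of $c(\Phi_k(u))$ occurs in every tail of $\Phi_k(u)$'' is not a consequence of the infinite iterated left basic factorization in any precise sense: the i.l.b.f.\ only tells you that the specific remainders $r_n$ have full content, while their complementary prefixes $w_1z_1\cdots w_nz_n$ grow in length. Turning that into ``$p$ occurs at arbitrarily late positions in the approximations $u_n$, uniformly in $n$'' requires first lifting a factorization of $\Phi_k(u)$ to a factorization of $u$, and this lifting is precisely what you have not done. Second, even granting the two positional claims, they do not by themselves yield the required \emph{ordering} (an occurrence of $q$ strictly before a later, non-overlapping occurrence of $p$, both strictly interior): ``$q$ not near the end'' and ``$p$ far to the right'' must be calibrated against each other and against $|u_n|$, and you give no argument that the inequalities $k<i_n$, $i_n+k\le j_n$, $j_n+k-1\le|u_n|-k$ can all be met for a cofinal set of $n$. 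Third, you do not actually need the right-sided dual i.l.b.f.\ at all.

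The paper sidesteps all of this by staying in the profinite world. From the infinite i.l.b.f.\ of $\Phi_k(u)$ it extracts, by grouping steps, a factorization $\Phi_k(u)=w_1z_1w_2z_2w_3z_3w_4z_4w_5$ with each $c(w_iz_i)=c(\Phi_k(u))$ and with every $w_i$ \emph{infinite}. It then sets $p=\be{k+1}(u)$, $q=\te{k+1}(u)$ (length $k+1$, so that $p,q$ are literally letters of the alphabet $A^{k+1}$), and applies \cite[Proposition 10.9.2]{Almeida:1994a} to lift this to a factorization $u=pxv_1yv_2zq$ with $\Phi_k(v_1)=w_2z_2$, $\Phi_k(v_2)=w_4z_4$; the infinitude of $w_3$ is what guarantees $v_1$ and $v_2$ are non-overlapping, which is the clean substitute for your uniform positional control. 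Since $q\in c(w_2z_2)$ and $p\in c(w_4z_4)$ as letters of $A^{k+1}$, the factor $q$ appears in $v_1$ and $p$ in $v_2$, giving $u=px'qy'pz'q$ directly. If you want to salvage your approach, the missing ingredient is exactly \cite[Proposition 10.9.2]{Almeida:1994a} applied to a deliberately chosen finite portion of the i.l.b.f.\ of $\Phi_k(u)$; once you have that, the compactness/approximation argument is superfluous.
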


 \begin{proof}
   (\ref{item:regularity-up-and-down-1})$\Leftrightarrow$(\ref{item:regularity-up-and-down-3}):
   The iterated left basic factorizations of
   $\Phi_k(u)$ and $[\Phi_k(u)]_{\pv {DS}}$
   have the same length (cf.~Remark~\ref{r:projecting-lbf}),
   hence this equivalence follows immediately from
      Proposition~\ref{p:ilbf-v-in-ds}.

   (\ref{item:regularity-up-and-down-2})$\Rightarrow$(\ref{item:regularity-up-and-down-3}):
   This implication follows from Corollary~\ref{c:basic-phi-k}.

   (\ref{item:regularity-up-and-down-1})$\Rightarrow$(\ref{item:regularity-up-and-down-2}):   
   By hypothesis, there is a factorization
   \begin{equation*}
   \Phi_k(u)=w_1z_1w_2z_2w_3z_3w_4z_4w_5,     
   \end{equation*}
   for some
   $w_i\in \Om {A^{k+1}}S$ ($1\le i\le 5$),
   $z_j\in A^{k+1}$ ($1\le j\le 4$),
   such that $c(w_iz_i)=c(\Phi_k(u))$ for $1\le i\le 4$.
   Moreover, we may choose $w_i$ to be an infinite pseudoword,
   for every $i$.

   Let $p=\be {k+1}(u)$ and $q=\te {k+1}(u)$.
   It follows from~\cite[Proposition 10.9.2]{Almeida:1994a}
   that there is some factorization of the form
   $u=pxv_1yv_2zq$, with
   $x,y,z\in(\Om AS)^1$ and $v_1,v_2\in\Om AS$
   such that $\Phi_k(v_1)=w_2z_2$
   and $\Phi_k(v_2)=w_4z_4$
   (the hypothesis that $w_3$ is infinite guarantees that the factors
   $v_1$ and $v_2$ of $u$ appear in a non-overlapping way
   when reading factors of $u$ from left to right).
   Since $q\in c(w_2z_2)=c(\Phi_k(u))$ and $p\in c(w_4z_4)=c(\Phi_k(u))$,
   we conclude that $p$ and $q$ are respectively factors of $v_1$ and $v_2$,
   thus $u=px'qy'pz'q$
   for some $x',y',z'\in (\Om AS)^1$.
   Note that $[\Phi_k(u)]_{\pv {DS}}$ is regular, by the already
   proved equivalence
   (\ref{item:regularity-up-and-down-1})$\Leftrightarrow$(\ref{item:regularity-up-and-down-3}).   
   This shows we are in the
   conditions of Lemma~\ref{l:regularity-up-and-down}, thus
   concluding the proof of the implication (\ref{item:regularity-up-and-down-1})$\Rightarrow$(\ref{item:regularity-up-and-down-2}).
 \end{proof}

 The following result is an analog of Corollary~\ref{c:1-8-i}.
 
\begin{teor}\label{t:adapt-1-8-i}
   Let $u$ and $v$ be elements of $\Om AS$ with length greater than~$k$.
     Let $\K$ be one of Green's relations
  $\J$, $\R$ or $\L$. 
   Suppose that
  $c_{k+1}(u)=c_{k+1}(v)$,
  $[u]_{\pv {DS}\ast\pv D_{k}}\leq_{\K}[v]_{\pv {DS}\ast\pv D_{k}}$
  and that $[v]_{\pv {DS}\ast\pv D_{k}}$ is regular.
  Then $[u]_{\pv {DS}\ast\pv D_{k}}\mathrel {\K}[v]_{\pv {DS}\ast\pv D_{k}}$.
\end{teor}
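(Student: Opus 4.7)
The plan is to mirror the proof of Corollary~\ref{c:1-8-i} by pulling back all the hypotheses through $\Phi_k$ to $\pv{DS}$, and then lifting the resulting $\K$-equivalence from $\pv{DS}$ back up to $\pv{DS}\ast\pv{D}_k$ by an explicit construction of witnesses.

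First, I would push the hypotheses through $\Phi_k$. By Corollary~\ref{c:basic-phi-k}(\ref{item:basic-phi-k-green-1}), the relation $\pv{DS}\ast\pv{D}_k\models u\leq_\K v$ yields $\pv{DS}\models\Phi_k(u)\leq_\K\Phi_k(v)$. Corollary~\ref{c:basic-phi-k}(\ref{item:basic-phi-k-green-2}) (equivalently, the implication (\ref{item:regularity-up-and-down-2})$\Rightarrow$(\ref{item:regularity-up-and-down-3}) of Theorem~\ref{t:regularity-up-and-down}) transports the regularity of $[v]_{\pv{DS}\ast\pv{D}_k}$ to that of $[\Phi_k(v)]_{\pv{DS}}$, and the content hypothesis $c_{k+1}(u)=c_{k+1}(v)$ is precisely $c(\Phi_k(u))=c(\Phi_k(v))$. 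Corollary~\ref{c:1-8-i} applied in $\pv{DS}$ to $\Phi_k(u)$ and $\Phi_k(v)$ then gives $\pv{DS}\models \Phi_k(u)\mathrel{\K}\Phi_k(v)$; in particular $[\Phi_k(u)]_{\pv{DS}}$ is regular, so Theorem~\ref{t:regularity-up-and-down} also makes $[u]_{\pv{DS}\ast\pv{D}_k}$ regular.

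Next, I would reduce to the $\J$-case. The free pro-$(\pv{DS}\ast\pv{D}_k)$ semigroup is compact, hence stable, so once we show $\pv{DS}\ast\pv{D}_k\models u\mathrel{\J}v$, the hypothesis $u\leq_\K v$ together with stability will yield $u\mathrel{\K}v$ for $\K\in\{\R,\L\}$. Since $u\leq_\J v$ is already given, what remains is to construct $s,t\in(\Om AS)^1$ with $\pv{DS}\ast\pv{D}_k\models v=sut$. By Theorem~\ref{t:basic-phi-k} this amounts to finding $s,t$ with $\be k(v)=\be k(sut)$, $\te k(v)=\te k(sut)$ and $\pv{DS}\models\Phi_k(v)=\Phi_k(s\be k(u))\Phi_k(u)\Phi_k(\te k(u)t)$.

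To produce $s$ and $t$, I would use the equality $\pv{DS}\models \Phi_k(v)=\alpha\Phi_k(u)\beta$ coming from $\Phi_k(u)\mathrel{\J}\Phi_k(v)$ in $\pv{DS}$ and attempt to realize $\alpha$ and $\beta$ as $\Phi_k$-images $\Phi_k(s\be k(u))$ and $\Phi_k(\te k(u)t)$ modulo $\pv{DS}$. Choosing $s$ to begin with $\be k(v)$ and $t$ to end with $\te k(v)$ secures the prefix/suffix conditions; for the $\Phi_k$-equality, one uses that $\Phi_k(v)$, $\Phi_k(u)$ and (by Theorem~\ref{t:regularity-1-content}) the idempotents $\Phi_k(v)^\omega,\Phi_k(u)^\omega$ all lie in the same regular $\J$-class of $\Om{A^{k+1}}{DS}$, which is a subsemigroup of $\Om{A^{k+1}}{DS}$ by the very definition of $\pv{DS}$. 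The idempotents of this class can be used to absorb the ``wrap-around'' $(k+1)$-factors introduced when forming $\Phi_k(s\be k(u))$ and $\Phi_k(\te k(u)t)$, and to match $\alpha$ and $\beta$ modulo $\pv{DS}$.

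I expect the main difficulty to be precisely this realization step: turning the abstract $\pv{DS}$-equality $\Phi_k(v)=\alpha\Phi_k(u)\beta$ into the more rigid form $\Phi_k(v)=\Phi_k(s\be k(u))\Phi_k(u)\Phi_k(\te k(u)t)$, which forces $\alpha$ and $\beta$ to respect the adjacency constraint characterizing the image of $\Phi_k$. The argument must ensure that the wrap-around $(k+1)$-factors introduced remain inside $c_{k+1}(v)$; this is where the regularity of $v$ (and hence of $u$) becomes essential, since Theorem~\ref{t:regularity-1-content} then forces equality of contents inside the regular $\J$-class and allows the absorbing idempotents to be inserted without enlarging the content.
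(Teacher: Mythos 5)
Your reduction to the $\J$-case by stability, the transport of the hypotheses through $\Phi_k$ via Corollary~\ref{c:basic-phi-k} and Theorem~\ref{t:regularity-up-and-down}, and the invocation of Corollary~\ref{c:1-8-i} to get $\pv{DS}\models\Phi_k(u)\mathrel{\J}\Phi_k(v)$ all match the paper and are correct. You also correctly single out the crux: turning the $\pv{DS}$-equality $\Phi_k(v)=\alpha\Phi_k(u)\beta$ into one of the constrained form $\Phi_k(v)=\Phi_k(s\,\be k(u))\,\Phi_k(u)\,\Phi_k(\te k(u)\,t)$. But this is where the proposal has a genuine gap. The witnesses $\alpha,\beta$ come from an abstract $\leq_{\J}$ relation in $\Om{A^{k+1}}{DS}$ and have no reason to be ``readable'' as sliding $(k+1)$-windows over a word of $A^+$, nor to glue correctly onto $\be{k+1}(u)$ and $\te{k+1}(u)$ at the seams. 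Your plan to fix this by inserting idempotents from the regular $\J$-class is only sketched; after multiplying $\alpha$ by such an idempotent you still have an element of $\Om{A^{k+1}}{DS}$ with no canonical lift to the image of $\Phi_k$, and no mechanism is given for choosing $s,t\in(\Om AS)^1$ realizing it. In short, you are trying to lift witnesses upward from $\pv{DS}$ to $\pv{DS}\ast\pv{D}_k$, and that lifting problem is precisely what is hard.

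The paper avoids the lifting problem entirely by working with witnesses that already live upstairs. From $[u]\leq_{\J}[v]$ in $\pv{DS}\ast\pv{D}_k$ and the (now established) regularity of $[u]$, one picks $x,y\in\Om AS$ so that $[xvy]$ is a group element in the $\J$-class of $[u]$. One then forms $z=(vyx)^{\omega}v$, which is a pseudoword over $A$ by construction, so no realization issue arises. The computation $\Phi_k(z)=\Phi_k(vyx\,\be k(v))^{\omega}\cdot\Phi_k(v)$, the content equality $c_{k+1}(vyxv)=c_{k+1}(v)$, and Theorem~\ref{t:regularity-1-content} combine to give $[\Phi_k(vyx\,\be k(v))^{\omega}]_{\pv{DS}}\mathrel{\R}[\Phi_k(v)]_{\pv{DS}}$, whence $\pv{DS}\models\Phi_k(z)=\Phi_k(v)$ and, by Theorem~\ref{t:basic-phi-k}, $\pv{DS}\ast\pv{D}_k\models z=v$. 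Since $[z]\leq_{\J}[xvy]\leq_{\J}[v]$ and $[xvy]\mathrel{\J}[u]$, this gives $[u]\mathrel{\J}[v]$. You should adopt this reversal of direction: instead of pulling a $\pv{DS}$-factorization of $\Phi_k(v)$ back up to $\Om AS$, build the auxiliary pseudoword $z$ from the data $u\leq_{\J}v$ you already have in $\pv{DS}\ast\pv{D}_k$ and verify $z=v$ by pushing down to $\pv{DS}$.
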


\begin{proof}
  Since $\Omvar A{(\pv {DS}\ast\pv D_{k})}$ is stable,
  it suffices to consider the case~$\K=\J$.
  
  By Corollary~\ref{c:basic-phi-k},
  we know that
  $[\Phi_k(u)]_{\pv {DS}}\leq_{\J}[\Phi_k(v)]_{\pv {DS}}$
  and that $[\Phi_k(v)]_{\pv {DS}}$ is regular.
  Since $c(\Phi_k(u))=c(\Phi_k(v))$,
  it follows from Corollary~\ref{c:1-8-i} that 
  $[\Phi_k(u)]_{\pv {DS}}\mathrel{\J}[\Phi_k(v)]_{\pv {DS}}$,
  and so $[\Phi_k(u)]_{\pv {DS}}$ is regular.
  Therefore,
  $[u]_{\pv {DS}\ast\pv D_{k}}$ is regular by
  Theorem~\ref{t:regularity-up-and-down}.

  Then there are $x,y\in \Om AS$ such that $[xvy]_{\pv {DS}\ast\pv
    D_k}$
  is a group element in the $\J$-class of
  $[u]_{\pv {DS}\ast\pv D_k}$:
  \begin{equation*}
  \pv {DS}\ast\pv D_k\models u\mathrel {\J}xvy=(xvy)^{\omega+1}.
  \end{equation*}
  
  Let $z=(vyx)^\omega v$. Then
  \begin{equation}\label{eq:adapt-1-8-i-1}
  \Phi_k(z)=\Phi_k((vyx)^\omega\,\be k(v))\cdot \Phi_k(v)
  =\Phi_k(vyx\,\be k(v))^\omega\cdot \Phi_k(v).
  \end{equation}
  Moreover, as $\pv {Sl}\ast\pv D_k\models (xvy)^{\omega+1}\mathrel {\J}u$,
  we have $c_{k+1}((xvy)^{\omega+1})=
  c_{k+1}(u)=c_{k+1}(v)$ and so
    \begin{equation*}
  c(\Phi_k(vyx\,\be
  k(v))^\omega)=c_{k+1}(vyxv)=c_{k+1}((xvy)^{\omega+1})=c(\Phi_k(v)).     
  \end{equation*}
  Since $[\Phi_k(v)]_{\pv {DS}}$ and $[\Phi_k(vyx\,\be k(v))^\omega]_{\pv
    {DS}}$
  are regular elements of $\Om {A^{k+1}}{DS}$,
  and since $\Phi_k(vyx\,\be  k(v))^\omega\leq_{\R}\Phi_k(v)$, it 
  follows
  from Theorem~\ref{t:regularity-1-content}
  that $[\Phi_k(v)]_{\pv {DS}}\mathrel{\R}[\Phi_k(vyx\,\be k(v))^\omega]_{\pv
    {DS}}$.
  
  Therefore, and as $[\Phi_k(vyx\,\be k(v))^\omega]_{\pv {DS}}$
  is idempotent, we deduce
  from~(\ref{eq:adapt-1-8-i-1}) that $\pv {DS}\models \Phi_k(z)=\Phi_k(v)$.
  Clearly, $\be k(z)=\be k(v)$ and $\te k(z)=\te k(v)$. Hence,
  $\pv {DS}\ast\pv D_k\models z=v$. Since
  $\pv {DS}\ast\pv D_k\models z\leq_{\J}xvy\leq_{\J}v$, we
  conclude that $\pv {DS}\ast\pv D_k\models u\mathrel {\J}v$.
\end{proof}

\section{Proof of Theorem~\ref{t:more-K-malc-ast-d}}\label{sec:proof-theor-reft:m}

The following lemma is implicitly shown
in the last paragraph
of the proof of Theorem 3.6 in~\cite{Trotter&Weil:1997}, for the case
of subpseudovarieties of $\pv {DS}$.
We also note that this result fits in
the framework of~\cite[Lemma 4.4.4 and Proposition
4.6.19]{Rhodes&Steinberg:2009}.

\begin{lema}\label{l:argument-used-by-pin-weil}
    Let $\pv V$ be a pseudovariety of semigroups.
    Given an alphabet~$A$, consider the
  canonical homomorphism $\rho:\Om A{(\pv K\malcev\pv V)}\to \Om AV$.
  If $x$ and $y$ are elements of
  $\Om A{(\pv K\malcev\pv V)}$
  such that $y$ is regular, $x\leq_{\R}y$ and $\rho(x)=\rho(y)$, then~$x=y$.
\end{lema}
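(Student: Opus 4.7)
The plan is to use only one tool, namely the explicit Pin-Weil basis for $\pv K\malcev\pv V$ given by Theorem~\ref{t:base-for-malcev-product}. Since $\pv K$ is defined by the left-permanent pseudoidentity $x_1^\omega=x_1^\omega x_2$, this basis yields the following handy consequence: for every pair $\alpha,\beta\in\Om A{(\pv K\malcev\pv V)}$ with $\rho(\alpha)=\rho(\beta)$ and with $\rho(\beta)$ idempotent, one has $\alpha^\omega=\alpha^\omega\beta$ in $\Om A{(\pv K\malcev\pv V)}$.

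Using the regularity of $y$, write $y=yty$ for some $t\in\Om A{(\pv K\malcev\pv V)}$, and set $e=yt$. Then $e$ is idempotent, $ey=y$, and $e\mathrel\R y$. Writing $x=yv$ for some $v\in(\Om A{(\pv K\malcev\pv V)})^1$ (the case $v=1$ is trivial), one also has $ex=(ey)v=yv=x$. The first application of Pin-Weil takes $\alpha=e$ and $\beta=xt$: since $\rho(\beta)=\rho(x)\rho(t)=\rho(y)\rho(t)=\rho(e)$ is idempotent, we obtain $e^\omega=e^\omega\cdot xt$, which simplifies to $e=e\cdot xt$. But also $e\cdot xt=(ex)t=xt$, forcing $xt=e$. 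An immediate consequence is that $tx$ is idempotent in $\Om A{(\pv K\malcev\pv V)}$: indeed $(tx)^2=t(xt)x=t\cdot e\cdot x=tex=tx$. Likewise $ty$ is idempotent, since $(ty)^2=t(yty)=ty$, and plainly $\rho(tx)=\rho(t)\rho(y)=\rho(ty)$.

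For the second application of Pin-Weil take $\alpha=ty$ and $\beta=tx$: both hypotheses are met since the two elements project to the same idempotent $\rho(ty)$. The identity becomes $(ty)(tx)=ty$; on the other hand, $(ty)(tx)=t(yt)x=tex=tx$, whence $ty=tx$. The conclusion then follows from
\begin{equation*}
y=yty=y(ty)=y(tx)=(yt)x=ex=x.
\end{equation*}
The main obstacle is organizational rather than technical: a direct attempt to apply Pin-Weil to the pair $(y,x)$ or $(e,x)$ fails because the common projection $\rho(x)=\rho(y)$ is not assumed to be idempotent. The trick is to detour through the auxiliary elements $xt$ and $tx$, whose projections do land on the idempotent $\rho(e)=\rho(yt)$, and to invoke Pin-Weil twice---first to collapse $xt$ to $e$, and then to collapse $tx$ to $ty$---so as to transport the equality $\rho(x)=\rho(y)$ back up to the equality $x=y$ in $\Om A{(\pv K\malcev\pv V)}$.
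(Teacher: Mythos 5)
Your proof is correct, and it rests on the same Pin--Weil fact as the paper's---namely that the fiber of $\rho$ over an idempotent is pro-$\pv K$, so $\alpha^\omega=\alpha^\omega\beta$ whenever $\alpha,\beta$ project to a common idempotent---but it is organized quite differently. The paper first treats the case where $y$ is itself idempotent by a Green's-relations argument: from $yx=y$ (the pro-$\pv K$ fact) and $x\le_{\R}y$ it deduces $x\mathrel{\H}y$ via stability of the profinite semigroup, and then concludes $x=y$ because the $\H$-class of the idempotent $y$ is a group with identity~$y$. It then reduces the general regular case to this one by passing to $sy$ and $sx$, where $y=ysy$. You bypass both the idempotent special case and the stability/$\H$-class machinery entirely: two carefully aimed applications of the pro-$\pv K$ consequence (first to the pair $(e,xt)$, pinning down $xt=e$, then to $(ty,tx)$, yielding $ty=tx$) replace the Green's-relations step, and the conclusion then drops out by plain multiplication. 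Both arguments pass through essentially the same intermediate identity ($tx=ty$ here, $sx=sy$ in the paper); yours buys a purely computational derivation that avoids any appeal to stability, while the paper's fits the standard ``reduce to an idempotent and exploit its $\H$-group'' idiom.
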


\begin{proof}
  Suppose first that $y$ is idempotent.
  By the Pin-Weil basis theorem for Mal'cev
  products~\cite{Pin&Weil:1996a}, 
  given an idempotent $e$ of $\Om AV$, the
  semigroup $\rho^{-1}(e)$ is a pro-$\pv K$ semigroup.
  Hence $\rho(x)=\rho(y)$
  implies $yx=y$, and so
  $y\leq_{\L}x$. Since, $x\leq_{\R}y$  
  we deduce that $y$ and $x$
  belong to the same $\H$-class $H$.
  As $y$ is idempotent, $H$ is a group
  with identity $y$. Therefore, we have $y=yx=x$.

  Consider now the general case in which $y$ is regular, but not
  necessarily idempotent. Then, there is $s\in \Om A{(\pv K\malcev\pv V)}$
  such that $y=ysy$.
  Note that $sy$ is idempotent,
  $sx\leq_{\R}sy$ and $\rho(sx)=\rho(sy)$. From the case already
  proved in the previous paragraph, we get $sx=sy$.
  By hypothesis, we may take $t\in \Om A{(\pv K\malcev\pv V)}$
  such that $x=yt$. Then $y=ysy=ysx=ysyt=yt=x$.
\end{proof}

The following auxiliary result was inspired by Theorem 3.6
from~\cite{Trotter&Weil:1997}, with which it has similarities. The main difference is that
the pseudovarieties considered
in Theorem 3.6 from~\cite{Trotter&Weil:1997}
are contained in $\pv {DS}$, while in the following result
we need to go out from that realm and
consider pseudovarieties contained in $\pv {DS}\ast\pv {D_1}$.
The locality of $\pv {DS}$ is crucial in the proof,
as it guarantees pseudovarieties
appearing there are indeed contained in $\pv {DS}\ast\pv {D_1}$.

\begin{prop}\label{p:adaptation-of-result-of-tw}
  Let $\pv V$ be a pseudovariety of semigroups such that
  $\pv {Sl}\subseteq \pv V\subseteq \pv {DS}$.
  Let $u,v\in \Om AS\setminus A$,
  with 
    \begin{equation}
    \label{eq:adaptation-of-result-of-tw-0}
    \pv {ilbf}_2(u)=((u_i)_{1\leq i<\ell_u +1};q_u)
    \quad\text{and}\quad
    \pv {ilbf}_2(v)=((v_i)_{1\leq i<\ell_v +1};q_v).
  \end{equation}  
  Then $\pv K\malcev (\pv V\ast\pv D_1)\models u=v$ if the
  following conditions hold:
   \begin{enumerate}
   \item $\pv V\ast\pv D_1\models u=v$;\label{item:adaptation-of-result-of-tw-1}
   \item $\ell_u=\ell_v$;\label{item:adaptation-of-result-of-tw-2}
   \item $\pv K\malcev (\pv V\ast\pv D_1)\models
     u_i=v_i$ for all $i\geq 1$ and
     $\pv K\malcev (\pv V\ast\pv D_1)\models q_u=q_v$.\label{item:adaptation-of-result-of-tw-4}
   \end{enumerate}
\end{prop}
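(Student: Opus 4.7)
The plan is to split the argument into two cases, according as $\ell := \ell_u = \ell_v$ is finite or infinite. Set $\pv W := \pv K\malcev(\pv V\ast\pv D_1)$. In the finite case $\ell\in\mathbb{N}$, the discussion preceding Lemma~\ref{l:useful-for-the-induction} shows that $u = u_1 u_2 \cdots u_\ell q_u$ and $v = v_1 v_2 \cdots v_\ell q_v$ hold as genuine equalities in $\Om AS$; hence the conclusion $\pv W\models u=v$ follows immediately from condition~(3), by multiplying the $\pv W$-pseudoidentities $u_i = v_i$ and $q_u = q_v$ factorwise. All the substantive work lies in the case $\ell = \infty$.

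For $\ell = \infty$, the strategy is to apply Lemma~\ref{l:argument-used-by-pin-weil} to the canonical projection $\rho \colon \Om AW \to \Omvar A{(\pv V\ast\pv D_1)}$. I aim to produce an element $w \in \Om AW$ which is regular, is $\le_\R$-above both $[u]_{\pv W}$ and $[v]_{\pv W}$, and satisfies $\rho(w) = \rho([u]_{\pv W}) = \rho([v]_{\pv W})$; the lemma then yields $[u]_{\pv W} = w = [v]_{\pv W}$. The natural candidate for $w$ is an accumulation point, obtained via compactness of $\Om AW$, of the prefix sequence $([u_1 u_2 \cdots u_i]_{\pv W})_i$. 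The $\le_\R$-comparabilities follow from the fact that $u_1 \cdots u_i$ is a prefix of $u$ (and, after identification via condition~(3), also of $v$) in $\Om AS$, together with closedness of $\le_\R$ in compact semigroups. For the matching of $\rho$-images I will work inside $\Omvar A{(\pv{DS}\ast\pv D_1)}$: Theorem~\ref{t:adapt-1-8-i} forces $\R$-equivalence of the image of $w$ with the regular element $[u]_{\pv{DS}\ast\pv D_1}$ (they share the same $c_2$-content and are $\le_\R$-comparable), and condition~(1) then ties this back to $\rho([u]_{\pv W}) = \rho([v]_{\pv W})$ in $\Omvar A{(\pv V\ast\pv D_1)}$.

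The main obstacle I foresee is proving that $w$ is regular in $\Om AW$ itself. The infinite iterated left basic factorization and Theorem~\ref{t:regularity-up-and-down} give regularity of the image of $w$ in $\Omvar A{(\pv{DS}\ast\pv D_1)}$, but regularity does not automatically lift along projections between profinite free objects. The resolution should exploit the $\le_\R$-decreasing nature of the prefix sequence, the pro-$\pv K$ fibre structure of $\rho$ supplied by the Pin--Weil basis theorem, and the Green's-relations machinery of Section~\ref{sec:pseud-pv-dsastpv}, to exhibit $w$ directly as an explicitly regular element of $\Om AW$---for instance, as the product of $[u]_{\pv W}$ with a suitable idempotent limit built from powers of the prefixes $[u_1 \cdots u_i]_{\pv W}$, so that regularity in $\Om AW$ can be read off from the construction rather than lifted from the quotient.
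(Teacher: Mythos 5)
Your overall strategy is aligned with the paper's: treat the finite case by factorwise multiplication, and in the infinite case pass to an accumulation point $w$ of the prefix sequence and invoke Lemma~\ref{l:argument-used-by-pin-weil} together with Theorem~\ref{t:adapt-1-8-i}. However, the ``main obstacle'' you identify at the end is a misdiagnosis stemming from an inverted picture of the relevant projection, and it causes your plan for the final step to drift from what actually closes the argument.

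The crucial ingredient you miss is the inclusion $\pv W = \pv K\malcev(\pv V\ast\pv D_1)\subseteq \pv{DS}\ast\pv D_1$, which the paper derives from $\pv V\subseteq\pv{DS}$, the locality of $\pv{DS}$, and Theorem~\ref{t:z-malc-ast-d}. Once this inclusion is in hand, $\Om AW$ is a \emph{quotient} of $\Omvar A{(\pv{DS}\ast\pv D_1)}$, so regularity of $[u]_{\pv{DS}\ast\pv D_1}$ and $[w]_{\pv{DS}\ast\pv D_1}$ (from Theorem~\ref{t:regularity-up-and-down}) is simply pushed forward through the canonical onto homomorphism to give regularity of $[u]_{\pv W}$ and $[w]_{\pv W}$ for free. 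You write that ``regularity does not automatically lift along projections between profinite free objects,'' and you are right that it does not \emph{lift}; but no lifting is required. You appear to be thinking of a nonexistent map from $\Om AW$ into $\Omvar A{(\pv{DS}\ast\pv D_1)}$ (``the image of $w$ in $\Omvar A{(\pv{DS}\ast\pv D_1)}$''), when the map goes the other way.

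This confusion also affects your planned application of Lemma~\ref{l:argument-used-by-pin-weil}. You propose applying it twice, sandwiching $u$ and $v$ against a pivot $w$, which would require $\rho(w)=\rho([u]_{\pv W})=\rho([v]_{\pv W})$ in $\Omvar A{(\pv V\ast\pv D_1)}$. Condition~(1) gives $\rho([u]_{\pv W})=\rho([v]_{\pv W})$, but nothing gives $\rho(w)=\rho([u]_{\pv W})$, and your appeal to $\R$-equivalence in $\pv{DS}\ast\pv D_1$ does not yield equality of $\rho$-images. The correct and simpler route, which the paper takes, is to apply the lemma \emph{once}, with $y=[u]_{\pv W}$ (already regular by the push-forward argument above) and $x=[v]_{\pv W}$; the element $w$ serves only as an auxiliary device to establish $[v]_{\pv W}\le_\R [u]_{\pv W}$, by combining $[v]_{\pv W}\le_\R[w_n]_{\pv W}$ for all $n$ (from condition~(3) and the prefix structure) with $[u]_{\pv W}\mathrel\R[w]_{\pv W}$ (from Theorem~\ref{t:adapt-1-8-i} applied in $\pv{DS}\ast\pv D_1$ and projected to $\pv W$). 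With these two corrections, your approach coincides with the paper's.
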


\begin{proof}
  Let $\ell=\ell_u=\ell_v$.
  If $\ell$ is finite, then
  $u=u_1u_2u_3\cdots u_{\ell}q_u$
  and $v=v_1v_2v_3\cdots v_{\ell}q_v$.
  Therefore, thanks to condition~(\ref{item:adaptation-of-result-of-tw-4}),
  the proof of the theorem for this case is immediate.

  Suppose that $\ell=\infty$.
  Denote by $\pv W$ the pseudovariety
  $\pv K\malcev (\pv V\ast\pv D_1)$.
  Then $\pv W\subseteq \pv K\malcev (\pv {DS}\ast\pv D_1)$.
  The pseudovariety $\pv {DS}$ is local~\cite{Jones&Trotter:1995}
  and one has $\pv K\malcev\pv {DS}=\pv {DS}$.
   Therefore, by Theorem~\ref{t:z-malc-ast-d}, we have
    $\pv K\malcev(\pv {DS}\ast\pv D_1)=\pv {DS}\ast\pv
    D_1$,
  whence $\pv W\subseteq \pv {DS}\ast\pv D_1$.
  Since $\ell=\infty$, the implicit operations
  $[u]_{\pv {DS}\ast\pv D_1}$ and $[v]_{\pv {DS}\ast\pv D_1}$ are
  regular by
  Theorem~\ref{t:regularity-up-and-down}.
  Hence, since $\pv W\subseteq \pv {DS}\ast\pv D_1$,
  the implicit operations $[u]_{\pv W}$ and $[v]_{\pv W}$ are also
  regular.
  
  For each $n\geq 1$,
  let $w_n=u_1u_2\cdots u_n$.
    Note that $u\leq_{\R}w_n$.
  Let $w$ be an accumulation point of the sequence $(w_n)_n$.
  Then $\pv{ilbf}(\Phi_1(w))$ is infinite and
  $u\leq_{\R}w$.
  In particular,
  $[w]_{\pv {DS}\ast\pv D_1}$ is regular by
  Theorem~\ref{t:regularity-up-and-down}.
  Moreover, $c_2(w)=c_2(u)$, and so applying
  Theorem~\ref{t:adapt-1-8-i} we obtain
  $[u]_{\pv {DS}\ast\pv D_1}\mathrel{\R}[w]_{\pv {DS}\ast\pv D_1}$.
    Since $\pv W\subseteq \pv {DS}\ast\pv D_1$,
  we conclude that
  $[u]_{\pv W}\mathrel{\R}[w]_{\pv W}$.  
  On the other hand, since $v_1\cdots v_n$ is a prefix of
  $v$,
  by Condition~(\ref{item:adaptation-of-result-of-tw-4}) in the
  statement,
  we have $[v]_{\pv W}\leq_{\R}[w_n]_{\pv W}$
  for all $n$.
  Hence $[v]_{\pv W}\leq_{\R}[u]_{\pv W}$.
  Since $[u]_{\pv V\ast\pv D_1}=[v]_{\pv V\ast\pv D_1}$,
  applying Lemma~\ref{l:argument-used-by-pin-weil} we get
  $[u]_{\pv W}=[v]_{\pv W}$.  
\end{proof}

\begin{prop}\label{p:basis-method}
  Let $\pv V$ be a pseudovariety of semigroups such that
  $\pv {Sl}\subseteq \pv V\subseteq \pv {DS}$.
  Then $(\pv K\malcev \pv V)\ast\pv D_1\models u=v$ implies
  $\pv K\malcev (\pv V\ast\pv D_1)\models u=v$,
  for every $u,v\in\Om AS$.
\end{prop}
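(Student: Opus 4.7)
The plan is to prove Proposition~\ref{p:basis-method} by induction on $|c_2(u)|$, dovetailing Lemma~\ref{l:useful-for-the-induction} (used to decompose the hypothesis along the iterated left basic factorization) with Proposition~\ref{p:adaptation-of-result-of-tw} (used to recompose the conclusion). The induction parameter is well-defined because the assumption $(\pv K\malcev\pv V)\ast\pv D_1\models u=v$ entails $\pv{Sl}\ast\pv D_1\models u=v$ (since $\pv{Sl}\subseteq \pv V\subseteq \pv K\malcev\pv V$), and hence $c_2(u)=c_2(v)$ by Theorem~\ref{t:basic-phi-k}.

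The base case is $|c_2(u)|=0$, which forces $u,v\in A$. Since both $(\pv K\malcev\pv V)\ast\pv D_1$ and $\pv K\malcev(\pv V\ast\pv D_1)$ contain $\pv K$, and hence $\pv N$, the natural map from $A^+$ to the corresponding free pro-semigroup is injective. Thus $u$ and $v$ coincide as letters, and there is nothing to prove.

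For the inductive step, take $u,v\in \Om AS\setminus A$. Corollary~\ref{c:malcev-idempotent} (applied to the permanent pseudovariety $\pv K$) gives $\pv K\malcev(\pv K\malcev\pv V)=\pv K\malcev\pv V$, and plainly $\pv K\malcev \pv V\supseteq \pv{Sl}$, so Lemma~\ref{l:useful-for-the-induction} applies with $\pv K\malcev\pv V$ in place of $\pv V$. Writing $\pv{ilbf}_2(u)=((u_i)_{1\le i<\ell+1};q_u)$ and $\pv{ilbf}_2(v)=((v_i)_{1\le i<\ell+1};q_v)$, the lemma produces $\ell_u=\ell_v=:\ell$ together with $(\pv K\malcev\pv V)\ast\pv D_1\models u_i=v_i$ for every admissible $i$ and, when $\ell$ is finite, $(\pv K\malcev\pv V)\ast\pv D_1\models q_u=q_v$.

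Each $u_i$ satisfies $c_2(u_i)=c(w_i)\subsetneq c(\Phi_1(u))=c_2(u)$, because in the left basic factorization producing $w_i$ the separating letter $z_i$ is not in $c(w_i)$; similarly $c_2(q_u)\subsetneq c_2(u)$ when $\ell$ is finite. The induction hypothesis therefore yields $\pv K\malcev(\pv V\ast\pv D_1)\models u_i=v_i$ for every $i$ and, when $\ell$ is finite, $\pv K\malcev(\pv V\ast\pv D_1)\models q_u=q_v$. At this point the three hypotheses of Proposition~\ref{p:adaptation-of-result-of-tw} are in hand: (1) $\pv V\ast\pv D_1\models u=v$ from the original assumption via $\pv V\subseteq \pv K\malcev\pv V$; (2) $\ell_u=\ell_v$ from Lemma~\ref{l:useful-for-the-induction}; (3) the factor equations just established in $\pv K\malcev(\pv V\ast\pv D_1)$. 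That proposition delivers $\pv K\malcev(\pv V\ast\pv D_1)\models u=v$. The main subtlety in the argument is the pseudovariety bookkeeping: Lemma~\ref{l:useful-for-the-induction} naturally returns equations in the ``wrong'' pseudovariety $(\pv K\malcev\pv V)\ast\pv D_1$ for the shorter-content factors, and the induction on $|c_2|$ is precisely what transports those equations into the target pseudovariety $\pv K\malcev(\pv V\ast\pv D_1)$ before feeding them to Proposition~\ref{p:adaptation-of-result-of-tw}.
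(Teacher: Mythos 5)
Your proof is correct and follows essentially the same path as the paper's: induction on $|c_2(u)|$, with Lemma~\ref{l:useful-for-the-induction} (applied to $\pv K\malcev\pv V$, using Corollary~\ref{c:malcev-idempotent} to verify its hypotheses) providing the factor equations in $(\pv K\malcev\pv V)\ast\pv D_1$, the induction hypothesis transporting them to $\pv K\malcev(\pv V\ast\pv D_1)$, and Proposition~\ref{p:adaptation-of-result-of-tw} reassembling the conclusion. The only cosmetic difference is that the paper opens with the observation $\pv K\models u=v$ to handle finite words, while you derive $c_2(u)=c_2(v)$ from $\pv{Sl}\ast\pv D_1\models u=v$; both serve the same bookkeeping role.
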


\begin{proof}  
  Since $(\pv K\malcev \pv V)\ast\pv D_1\models u=v$,
  in particular we have 
  $\pv K\models u=v$. Therefore, $u\in A^+$ if and only if $v\in A^+$,
  and if $u,v\in A^+$ then $u=v$.
   
  Next, we prove the proposition by induction on $|c_2(u)|$.
  If $|c_2(u)|=0$, then $u\in A$,
  and so $u=v$.

  Suppose that $|c_2(u)|>0$, and that the proposition holds for
  pseudoidentities between elements with fewer
  factors of length two than $u$ has.
  Let
  \begin{equation*}
  \pv {ilfb}_2(u)=((u_i)_{1\leq i<\ell_u +1};q_u),\quad
  \pv {ilfb}_2(v)=((v_i)_{1\leq i<\ell_v +1};q_v).
\end{equation*}
  Since $(\pv K\malcev \pv V)\ast\pv D_1\models u=v$,
  from Lemma~\ref{l:useful-for-the-induction}
  we deduce that $\ell_u$ and $\ell_v$ are equal, say to
  $\ell$,
  and that
    \begin{equation}
    \label{eq:basis-method-3}
    (\pv K\malcev \pv V)\ast\pv D_1\models
    u_i=v_i\quad\text{and}\quad
    (\pv K\malcev \pv V)\ast\pv D_1\models q_u=q_v,
  \end{equation}
  for all $i$ such that $1\leq i<\ell+1$.
  
  By the definition of $\pv {ilfb}_2$, we know that $c_2(u_i)$ and
  $c_2(q_u)$ have fewer elements than $c_2(u)$. Applying the induction
  hypothesis to (\ref{eq:basis-method-3}), we obtain
      \begin{equation}
    \label{eq:basis-method-4}
    \pv K\malcev (\pv V\ast\pv D_1)\models
    u_i=v_i\quad\text{and}\quad
    \pv K\malcev (\pv V\ast\pv D_1)\models q_u=q_v,
  \end{equation}
  for all $i$ such that $1\leq i<\ell+1$.  
  Therefore $\pv K\malcev (\pv V\ast\pv D_1)\models u=v$
  by Proposition~\ref{p:adaptation-of-result-of-tw}.
\end{proof}

 We are now ready to conclude the proof of
 Theorem~\ref{t:more-K-malc-ast-d}.
 We do it in three steps, one for each of the
  pseudovarieties $\pv K$, $\pv D$ and $\Lo I$.

 \begin{proof}[The case of pseudovariety $\pv K$]
   We want to show that the following inclusion holds for every
   pseudovariety of semigroups $\pv V$ containing $\pv {Sl}$, and
   every $k\geq 1$:
   \begin{equation}\label{eq:first-step}
   \pv K\malcev (\pv V\ast\pv D_k)\subseteq
   (\pv K\malcev \pv V)\ast\pv D_k.        
   \end{equation}  
  If $B_2\in\pv V$, then there is nothing to prove thanks
to Theorem~\ref{t:more-z-malc-ast-d}.
Suppose that $B_2\notin\pv V$, that is,
$\pv V\subseteq \pv {DS}$.

  Since every pseudovariety is defined by a basis of pseudoidentities,
  Proposition~\ref{p:basis-method} states that
  \begin{equation}\label{eq:special-case-k-is-1}
      \pv K\malcev (\pv V\ast\pv D_1)\subseteq (\pv K\malcev \pv V)\ast\pv D_1.
  \end{equation}

  It is well known that $B_2\in \pv V\ast\pv D_1$
  (cf.~\cite[Theorem 5.7]{Straubing:1985}, for example).
  Let $k$ be an integer greater than $1$. Since
  $\pv V\ast\pv D_k=\pv V\ast\pv D_1\ast\pv D_{k-1}$
  (recall that $\pv {D}_m\ast\pv {D}_n=\pv {D}_{m+n}$
  for all $m,n\geq 1$~\cite[Lemma 10.4.1]{Almeida:1994a})
  and $B_2\in \pv V\ast\pv D_1$, we may apply
  Theorem~\ref{t:more-z-malc-ast-d} to get
  \begin{equation*}
    \pv K\malcev (\pv V\ast\pv D_k)
    \subseteq
    (\pv K\malcev (\pv V\ast\pv D_1))\ast\pv D_{k-1}.
  \end{equation*}
  By~(\ref{eq:special-case-k-is-1}), we then obtain
  \begin{equation*}
    \pv K\malcev (\pv V\ast\pv D_k)\subseteq
    (\pv K\malcev \pv V)\ast\pv D_1\ast\pv D_{k-1},
  \end{equation*}
  which is precisely inclusion~(\ref{eq:first-step}).
\end{proof}

\begin{proof}[The case of pseudovariety $\pv D$]
  The inclusion
   \begin{equation}\label{eq:second-step}
   \pv D\malcev (\pv V\ast\pv D_k)\subseteq
   (\pv D\malcev \pv V)\ast\pv D_k.
   \end{equation}
   is obtained by applying
   Propositions~\ref{p:reverse-VastD} and~\ref{p:reverse-malcev}
   to the inclusion~(\ref{eq:first-step}).   
\end{proof}

\begin{proof}[The case of pseudovariety $\Lo I$]
   Let $\alpha$ and $\beta$ be the operators on the lattice of
   semigroup pseudovarieties defined by
   $\alpha(\pv W)=\pv K\malcev \pv W$ and $\beta(\pv W)=\pv D\malcev \pv W$,
   where $\pv W$ is a pseudovariety of semigroups.
   It is proved in~\cite{Auinger&Hall&Reilly&Zhang:1995}
   (more precisely, see Theorems 6.4 and 8.2
   from~\cite{Auinger&Hall&Reilly&Zhang:1995})
   that
   \begin{equation}\label{eq:Li-hierarchy}
   \Lo I\malcev \pv W
   =\bigcup_{n\ge 0}(\beta\circ\alpha)^n(\pv W)
   =\bigcup_{n\ge 0}(\alpha\circ\beta)^n(\pv W)
   \end{equation}
   for every pseudovariety of semigroups
   $\pv W$.
   If $\pv V$ is a pseudovariety of semigroups containing
   $\pv {Sl}$, then
   it follows easily from~(\ref{eq:first-step})
   and~(\ref{eq:second-step}), and from induction on $n$,
   that $(\beta\circ\alpha)^n(\pv V\ast\pv D_k)\subseteq
   (\beta\circ\alpha)^n(\pv V)\ast\pv D_k$
   for every $n\ge 0$ and~$k\ge 1$.
   Thanks to (\ref{eq:Li-hierarchy}),
   this shows $\Lo I\malcev (\pv V\ast\pv D_k)\subseteq
   (\Lo I\malcev \pv V)\ast\pv D_k$.
\end{proof}

All cases were considered, whence
Theorem~\ref{t:more-K-malc-ast-d} is proved.\qed

\appendix

\section{Proof of Proposition~\ref{p:local-operator-comutes}}

To write an explicit proof of
Proposition~\ref{p:local-operator-comutes}
(which, as remarked in the introduction, is a straightforward
generalization of the result presented
in~\cite[Exercise 4.6.58]{Rhodes&Steinberg:2009})
we
need the language and results of the
``Semilocal Theory'' of finite semigroups, the subject
of Section 4.6 in~\cite{Rhodes&Steinberg:2009}. We follow some
definitions and notations
from~\cite[Subsections 4.6.1 and 4.6.2]{Rhodes&Steinberg:2009}.
However, we also introduce definitions and notations
of our own, in order to make a
uniform proof of Proposition~\ref{p:local-operator-comutes}.

We use
the expression
\emph{$\pv K$-semigroup}
for a right mapping semigroup
and the expression
\emph{$(\pv K\vee \pv G)$-semigroup}
for a right letter mapping semigroup.
The expressions \emph{$\pv D$-semigroup}
and \emph{$(\pv D\vee \pv G)$-semigroup}
have dual meanings.
Finally, the expression
\emph{$\Lo I$-semigroup}
is used for
generalized group mapping semigroups,
and
\emph{$\Lo G$-semigroup} for
generalized group mapping semigroups with an aperiodic distinguished
$\J$-class.

The following result is also from~\cite{Rhodes&Steinberg:2009} (cf.~\cite[Proposition 4.6.56 and the comment following its proof]{Rhodes&Steinberg:2009}).

\begin{prop}\label{p:local-submonoids-are-also-xmapping} 
  Let $\pv Z\in\mathbb V\setminus\{\pv N,\pv N\vee\pv G\}$.
  If $S$ is a $\pv Z$-semigroup
  then every local monoid of $S$ is a $\pv Z$-semigroup.
\end{prop}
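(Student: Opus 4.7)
The plan is to argue case-by-case through the six pseudovarieties in $\mathbb V\setminus\{\pv N,\pv N\vee\pv G\}$, namely $\pv K$, $\pv D$, $\pv K\vee\pv G$, $\pv D\vee\pv G$, $\Lo I$, and $\Lo G$. Each type of $\pv Z$-semigroup is characterized by a distinguished regular $\J$-class $J_S$ and a faithful action of $S$ on a combinatorial structure attached to $J_S$: for $\pv K$-semigroups, the right action on the set of $\R$-classes of $J_S^0$; for $(\pv K\vee\pv G)$-semigroups, that same action but by partial injective maps on the non-zero $\R$-classes; dually for $\pv D$ and $\pv D\vee\pv G$; for $\Lo I$-semigroups (generalized group mapping), the combined faithful action of $S$ on $J_S^0$ itself; and for $\Lo G$, the same with the additional requirement that the Schützenberger group of $J_S$ be trivial. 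The uniform strategy is to exhibit, for each idempotent $e\in S$ and local monoid $M=eSe$, a distinguished regular $\J$-class $J_M$ of $M$ together with the appropriate faithful action, so that $M$ inherits the type of $S$.

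I would begin with the $\pv K$ case as prototype. Given $S$ right mapping with apex $J_S$, the natural candidate for $J_M$ is the (unique) regular $\J$-class of $M$ whose elements lie in the $\J$-class of $S$ that is highest among those meeting $eSe$; concretely, among the regular $\J$-classes of $M$ one picks the one that ``covers'' $J_S$ via the map $x\mapsto SxS$. The restriction of the right action of $S$ on $\R$-classes of $J_S^0$ to elements of $M$ yields a right action of $M$; its faithfulness follows from the observation that if $x,y\in eSe$ act identically on the $\R$-classes of $J_S^0$ then they act identically as elements of $S$, hence $x=y$. From this, the $(\pv K\vee\pv G)$ case is handled by additionally noting that partial injectivity of the action on $\R$-classes is inherited by restriction, while the cases $\pv D$ and $\pv D\vee\pv G$ follow by the duality $S\leftrightarrow S^\rev$, since taking local monoids commutes with reversal.

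For the two remaining cases, I would verify that the combined left-right faithfulness defining $\Lo I$-semigroups (equivalently, $S$ acts faithfully on $J_S^0$) passes to $M$: if $x,y\in M$ agree on $J_M^0$ under both the left and right actions, one lifts back to $J_S^0$ using the correspondence chosen above. The case of $\Lo G$ is then immediate once $\Lo I$ is settled, since the aperiodicity of the distinguished $\J$-class is a group-theoretic property of the Schützenberger group that is preserved when moving between $J_S$ and $J_M$. The main obstacle I anticipate is pinning down $J_M$ cleanly and verifying that the faithful-action condition survives restriction; this is subtle because the $\R$-classes (or $\L$-classes) of $J_S^0$ need not all be visible inside $M$, so faithfulness of the restricted action is not automatic and must be traced carefully back through the definitions. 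The exclusion of $\pv N$ and $\pv N\vee\pv G$ is natural here: these correspond to $\pv K\cap\pv D$ and $(\pv K\vee\pv G)\cap(\pv D\vee\pv G)$, and no analogous ``$\pv Z$-semigroup'' notion is being attached to them in the semilocal framework, so they fall outside the scope of this case analysis.
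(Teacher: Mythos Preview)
The paper does not supply a proof of this proposition at all: it simply cites Rhodes and Steinberg~\cite[Proposition 4.6.56 and the comment following its proof]{Rhodes&Steinberg:2009}. So there is nothing to compare your argument against within the paper itself; you are attempting to reconstruct a proof that the authors chose to import.

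Your overall strategy (treat the six cases separately; exploit that faithfulness of an action passes to subsemigroups; use duality to reduce $\pv D$, $\pv D\vee\pv G$ to $\pv K$, $\pv K\vee\pv G$) is the right shape, and is essentially how the cited result is proved. However, two points in your sketch are genuinely problematic. First, your description of the distinguished $\J$-class $J_M$ is confused: in a right mapping (or GGM, etc.) semigroup the distinguished class is $0$-minimal, not maximal, so ``the $\J$-class of $S$ that is highest among those meeting $eSe$'' is the $\J$-class of $e$ itself, giving $J_M=\{e\}$, which is not what you want; and this clashes with your second description ``covers $J_S$'', since $J_S$ sits at the bottom. You need instead to identify the $0$-minimal $\J$-class of $M$ and relate it to $J_S\cap eSe$. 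Second, your faithfulness argument shows only that $M$ acts faithfully on $J_S^0$ by restriction from $S$; this is not yet the statement that $M$ is right mapping with respect to a $\J$-class \emph{of $M$}. Bridging that gap --- showing the faithful action of $M$ on $J_S^0$ factors through (or is equivalent to) a faithful action on $J_M^0$ for the correct $J_M$ --- is exactly the missing idea, and it is not automatic for the reason you yourself flag. The source you are trying to reproduce handles this by analysing how the $\R$- and $\L$-classes of $J_S$ interact with $e$, and you would need to do the same.
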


Next, we uniformize some of the definitions appearing
in~\cite[Subsection 4.6.2]{Rhodes&Steinberg:2009}.
Let $S$ be a finite semigroup
and let $J$ be a regular $\J$-class of $S$.
The canonical homomorphisms 
\begin{center}
$S\to \pv {AGGM}_J(S)$,
$S\to \pv {GGM}_J(S)$,
$S\to \pv {RLM}_J(S)$
and 
$S\to \pv {RM}_J(S)$  
\end{center}
will be denoted respectively by
$\mu_{\Lo G,J}^S$,
$\mu_{\Lo I,J}^S$,
$\mu_{\pv K\vee\pv G,J}^S$
and $\mu_{\pv K,J}^S$.

Similarly, the canonical homomorphisms
\begin{center}
$S\to S/{\pv {AGGM}}$,
$S\to S/{\pv {GGM}}$,
$S\to S/{\pv {RLM}}$,
and
$S\to S/{\pv {RM}}$  
\end{center}
will be denoted respectively by
$\mu_{\Lo G}^S$,
$\mu_{\Lo I}^S$,
$\mu_{\pv K\vee\pv G}^S$
and $\mu_{\pv K}^S$.

The homomorphisms
$\mu_{\pv D\vee\pv G,J}^S$, $\mu_{\pv D,J}^S$,
$\mu_{\pv D\vee\pv G}^S$ and $\mu_{\pv D}^S$
have the obvious dual definitions.
Let $Jr(S)$ be the set of regular $\J$-classes of $S$.
Recall from~\cite[Definition 4.6.44]{Rhodes&Steinberg:2009}
that $\Ker {\mu_{\pv Z}^S}=\bigcap_{J\in Jr(S)}\Ker {\mu_{\pv Z,J}^S}$
for every $\pv Z\in \mathbb V\setminus\{\pv N\vee\pv G,\pv N\}$,
whence $\mu_{\pv Z}^S(S)$ is a subdirect product
of $\prod_{J\in Jr(S)}\mu_{\pv Z,J}^S(S)$.

We divide the proof of
Proposition~\ref{p:local-operator-comutes} in two parts.

\begin{proof}[Proof of Proposition~\ref{p:local-operator-comutes}]
  We treat first the case $\pv Z\in\mathbb V\setminus\{\pv N,\pv
  N\vee\pv G\}$.
  
  Suppose that $S\in \pv Z\malcev \Lo {\pv V}$. Then
  $\mu_{\pv Z}^S(S)\in \Lo V$,
  by~\cite[Theorem 4.6.50]{Rhodes&Steinberg:2009}.
  If $e$ is an idempotent of $S$, then
  $\mu_{\pv Z}^S(eSe)$ is a local monoid of
  $\mu_{\pv Z}^S(S)$, thus $\mu_{\pv Z}^S(eSe)\in \pv V$.
  When $f$ is an idempotent of
  $\mu_{\pv Z}^S(S)$, the
  subsemigroup $(\mu_{\pv Z}^S)^{-1}(f)$ of $S$
  belongs to $\pv Z$,
  by~Theorem 4.6.46 from~\cite{Rhodes&Steinberg:2009},
  and (depending on $\pv Z$) by
  Propositions 4.6.11, 4.6.13 or 4.6.19
  from~\cite{Rhodes&Steinberg:2009}.
    Therefore $eSe\in \pv Z\malcev \pv V$.
  This proves $\pv Z\malcev \Lo V\subseteq \Lo {(\pv Z\malcev \pv V)}$.

  Conversely, suppose that $S\in \Lo {(\pv Z\malcev \pv V)}$.
  Consider a regular $\J$-class $K$ of $S$.
    Let $e$ be an idempotent of $S$.
  Then $M=\mu_{\pv Z,K}^S(eSe)$ is a local monoid of
  $\mu_{\pv Z,K}^S(S)$, and every local monoid of
  $\mu_{\pv Z,K}^S(S)$ is of this form.  
  The semigroup $\mu_{\pv Z,K}^S(S)$ is a $\pv Z$-semigroup by
  Proposition 4.6.29,
  4.6.31, 4.6.35
  or 4.6.37 from \cite{Rhodes&Steinberg:2009},
  depending on which $\pv Z$ is considered.
  Hence $M$ is also a
  $\pv Z$-semigroup by
  Proposition~\ref{p:local-submonoids-are-also-xmapping}.
  If 
  $J$ is the distinguished $\J$-class of $M$ then,
  using the characterizations of
  $\pv Z$-semigroup
  and $\mu_{\pv Z,J}^M$,
  we conclude that $\mu_{\pv Z,J}^M$
  is the identity,
  and so $\mu_{\pv Z}^M$ is also the identity.
  Since $S\in \Lo {(\pv Z\malcev \pv V)}$
  and the monoid $M$ is a homomorphic image of
  $eSe$, we know that $M\in \pv Z\malcev \pv V$.
  And since $\mu_{\pv Z}^M(M)=M$,
  we actually have $M\in\pv V$ by~\cite[Theorem 4.6.50]{Rhodes&Steinberg:2009}.
  This proves that
  $\mu_{\pv Z,K}^S(S)\in \Lo V$.
  As $\mu_{\pv Z}^S(S)$ divides $\prod_{K\in Jr(S)}\mu_{\pv
    Z,K}^S(S)$,
  we deduce that $\mu_{\pv Z}^S(S)\in \Lo V$,
  and so $S\in\pv Z\malcev \Lo V$,
  again by \cite[Theorem 4.6.50]{Rhodes&Steinberg:2009}.
  This concludes the proof of the inclusion
  $\Lo {(\pv Z\malcev \pv V)}\subseteq\pv Z\malcev \Lo V$
  in the case $\pv Z\in\mathbb V\setminus\{\pv N,\pv
  N\vee\pv G\}$.

  Finally, let us prove the proposition for
  $\pv Z\in\{\pv N,\pv N\vee\pv G\}$.  
  In this case $\pv Z=\pv Z_1\cap \pv Z_2$
  for some $\pv Z_1,\pv Z_2\in \mathbb V$.
  Then
  by~\cite[Corollary 3.2]{Pin&Weil:1996a}
  and the already proved cases, we have
  \begin{equation*}
    \pv Z\malcev \Lo V
    =\pv Z_1\malcev \Lo V\cap \pv Z_2\malcev \Lo V
    =\Lo {(\pv Z_1\malcev \pv V)}\cap \Lo {(\pv Z_2\malcev \pv V)}.    
  \end{equation*}
  Since $\Lo {(\pv W_1\cap \pv W_2)}=\Lo  W_1\cap \Lo W_2$ for every
  pair of pseudovarieties $\pv W_1$, $\pv W_2$,
  and again taking into account~\cite[Corollary 3.2]{Pin&Weil:1996a},
  we therefore obtain  $\pv Z\malcev \Lo V=\Lo {(\pv Z\malcev \pv V)}$.
\end{proof}

\providecommand{\bysame}{\leavevmode\hbox to3em{\hrulefill}\thinspace}
\providecommand{\MR}{\relax\ifhmode\unskip\space\fi MR }
\providecommand{\MRhref}[2]{%
  \href{http://www.ams.org/mathscinet-getitem?mr=#1}{#2}
}
\providecommand{\href}[2]{#2}

\end{document}